\numberwithin{equation}{section}
\newtheorem{theorem}{Theorem}[section] 
\newtheorem{definition}[theorem]{Definition}
\newtheorem{corollary}[theorem]{Corollary}
\newtheorem{proposition}[theorem]{Proposition}
\newtheorem{lemma}[theorem]{Lemma}
\newtheorem{remark}[theorem]{Remark}
\renewcommand{\leq}{\leqslant}
\renewcommand{\geq}{\geqslant}
\numberwithin{equation}{section}
\newcommand{\pt}{\partial_t}
\title{Identification of Source Terms in the Ginzburg-Landau Equation from Final Data}
\date{}
\author{
	Roberto Morales\thanks{Chair of Computational Mathematics, DeustoTech, University of Deusto,
Avenida de las Universidades 24, Bilbao, 48007, Basque Country, Spain, e-mail: roberto.morales@deusto.es.}
	\and
	Javier Ram\'irez-Ganga\thanks{Centro de Modelamiento Matem\'atico (CMM) IRL 2807 CNRS-UChile and Departamento de Ingenier\'ia Matem\'atica (DIM), Universidad de Chile, Beauchef 851, Casilla 170-3, Correo 3, Santiago, Chile, e-mail: jramirez@dim.uchile.cl.}
	}
\begin{document}
\maketitle
\begin{abstract}
In this article, we study an inverse problem consisting in the identification of a space-time dependent source term in the Ginzburg-Landau equation from final-time observations. We adopt a weak-solution framework and analyze the associated Tikhonov functional, deriving an explicit gradient formula via an adjoint system and proving its Lipschitz continuity. We then establish existence and uniqueness results for quasi-solutions, and validate the theory with numerical experiments based on iterative methods.

\end{abstract}

\noindent {\bf Keywords:} Inverse problem, Ginzburg-Landau equation, Numerical simulations.\\

\noindent {\bf MSC Classifications (2020):} 35Q56, 35R30, 49N45, 65N21. 

\section{Introduction}

This section introduces the mathematical formulation of the inverse problem and specifies the functional framework adopted throughout the paper. We also describe the model equation, its physical motivation, and the regularity assumptions required to ensure well-posedness.

Let $\Omega\subset \mathbb{R}^N$ be a bounded domain ($N\geq 1$) with boundary $\partial \Omega$ of class $C^2$ and $T>0$. Here and in the sequel, the function spaces refer to spaces of complex-valued functions unless otherwise specified. 

We consider $a,b$, $\vec{r}$ and $p$ with the following assumptions:
\begin{itemize}
    \item[{\bf (H1)}] $a,b\in W^{1,\infty}(\Omega;\mathbb{R})$ with
    \begin{align*}
        a(x)\geq a_\star >0\quad \text{ almost everywhere in }\Omega, 
    \end{align*}
    for some $a_\star$.
    \item[{\bf (H2)}] $\vec{r}=\vec{r}(x)$, $p=p(x)$ and $y_0=y_0(x)$ are complex-valued functions such that 
    \begin{align*}
        \vec{r}\in [W^{1,\infty}(\Omega)]^N,\,
        q\in L^\infty(\Omega)\text{ and }y_0\in L^2(\Omega).
    \end{align*}
\end{itemize}

Now, consider the following Ginzburg-Landau equation with Dirichlet boundary conditions:
\begin{align}
    \label{eq:intro:01}
    \begin{cases}
        \pt y -\text{div}((a(x)+ib(x))\nabla y) + \vec{r}(x)\cdot \nabla y +p(x)y=f(x,t)&\text{ in }\Omega\times (0,T),\\
        y=0&\text{ on }\partial \Omega\times (0,T),\\
        y(\cdot,0)=y_0&\text{ in }\Omega. 
    \end{cases}
\end{align}

Under the assumptions {\bf (H1)}, {\bf (H2)} and $f\in L^2(0,T;L^2(\Omega))$, there exists a unique weak solution of \eqref{eq:intro:01} with the following regularity:
\begin{align*}
    y\in C^0([0,T];L^2(\Omega)) \cap L^2(0,T;H_0^1(\Omega)).
\end{align*}

For completeness and self-containment,  we include in Appendix \ref{section:appendix:existence:uniqueness:results} general statements, together with their proofs, concerning the existence of weak and strong solutions of the equation \eqref{eq:intro:01}. 

In this article, we will consider the following {\bf Inverse Source Problem (ISP):} Identify the unknown spatial-temporal source term $f$ in the space $L^2(0,T;L^2(\Omega))$ in \eqref{eq:intro:01} from the following final time measured output:
\begin{align*}
    u_T(x):=y(x,T),\quad x\in \Omega. 
\end{align*}

Here, $y(x,T)\equiv y(x,t)\big|_{t=T}$ is a trace appropriately defined of the weak solution $y(x,t)$ of \eqref{eq:intro:01}. In addition, $u_T(x)$ represents the measured output containing a random noise. 

Inverse source problems of this type are not only mathematically rich but also arise naturally in applied sciences. In many experimental settings, direct access to distributed sources is often impossible, and only partial or final-time measurements are available. For instance, in nonlinear optics one may measure the profile of a pulse at the end of a cavity, while the internal gain or loss mechanisms driving its formation remain hidden. Similarly, in chemical or biological pattern formation, one often observes the state of the system after some transient dynamics but seeks to infer the localized heterogeneities or forcing terms that generated it. This gap between accessible measurements and hidden dynamics motivates the study of reconstruction methods capable of identifying unknown excitations from indirect data.

The choice of the Ginzburg-Landau equation as the underlying model is particularly significant. It serves as a universal amplitude equation near the onset of instability in diverse physical systems, ranging from fluid dynamics and plasma physics to optics and chemical oscillations \cite{Aranson2002,levermore1996complex,chen1998numerical,bartuccelli1990possibility,mielke2002ginzburg}. Consequently, advances in the understanding of inverse problems for this equation are not limited to a single discipline but carry implications across multiple fields. Identifying unknown sources in such a model not only enriches the theory of dissipative-dispersive PDEs but also provides a mathematical framework for interpreting experiments in which only end-point information is available.

At the mathematical level, the inverse source problem we consider is severely ill-posed. Regulation strategies and variational formulations thus become essential. Our work adopts this perspective, combining the theory of weak solutions, adjoint-based gradient formulas, and Tikhonov regularization to rigorously justify reconstruction algorithms and test their performance through numerical simulations.


\subsection{Literature}

We now review some representative contributions on the complex Ginzburg-Landau equation and its variants, emphasizing their role as canonical amplitude equations and discussing prior work related to inverse or identification problems.

The complex Ginzburg-Landau (GL) equation is the canonical amplitude equation near a Hopf or Turing-Hopf bifurcation. In its cubic form, it reads
\begin{align*}
    \pt A=(\mu+i\omega)A + (1+c_1i)\Delta A -(1+c_3 i)|A|^2 A,
\end{align*}
while the cubic-quintic extension adds a saturating nonlinearity:
\begin{align*}
    \pt A=(\mu +\omega i)A +(1+c_1i)\Delta A -(1+c_3 i)|A|^2 A -(\nu +c_5i)|A|^4A,
\end{align*}
with real parameters $\mu,\nu,c_j$ ($j=1,3,5$) that encode growth, dispersion/diffusion, and nonlinear gain/loss. The cubic model is the universal normal form near onset and captures phase diffusion, plane-wave selection and Benjamin-Feir/Newell instabilities. On the other hand, the quintic term become relevant farther from threshold to model saturation and complex dissipative structures (e.g. localized pulses). See the reviews \cite{Cross1993} and \cite{Aranson2002}, which also discuss higher-order variants and parameter scalings.

The cubic and cubic-quintic complex GL equations serve as canonical models of pattern formation and nonlinear wave dynamics in nonequilibrium systems. These models describe the slow complex envelope $A(x,t)$ of oscillatory modes near stability. In hydrodynamics and Rayleigh-B\'enard convection, the cubic GL equation explains the emergence of roll patterns, travelling waves, and their stability domains \cite{Newell1969} and \cite{Cross1993}. In nonlinear optics, it governs mode-locked lasers, pulse propagation in fiber cavities, and the formation of dissipative solutions. In this case, the cubic-quintic equation extension accounts for gain saturation and quintic nonlinear effects that are essential for bounded amplitude states \cite{SotoCrespo1997}, \cite{Akhmediev2005}. Similarly, in chemical oscillations and biological excitable media, it models spiral waves, defect turbulence, and spatiotemporal chaos, highlighting its universality across disciplines \cite{Aranson2002}.

From a mathematical viewpoint, the GL equations combine parabolic smoothing (via the Laplacian operator) with dispersive phase dynamics (via imaginary coefficients), which generates a delicate interplay between diffusion, dispersion, and nonlinearity \cite{Akhmediev2005,Cross1993, levermore1996complex, mielke2002ginzburg}. This hybrid character produces a wide spectrum of phenomena: existence of plane-wave solutions, modulational (Benjamin-Feir/Newell) instabilities, turbulence, and complex coherent structures \cite{Akhmediev2005, Cross1993, Newell1969, SotoCrespo1997}. The cubic equation alone already yields chaotic attractors, defect-mediated turbulence, and spiral wave breakup \cite{Aranson2002,bartuccelli1990possibility, SotoCrespo1997}, while the quintic term stabilizes or destabilizes localized patterns depending on parameter regimes \cite{Akhmediev2005,SotoCrespo1997}. The GL equations have been used to study attractors, bifurcations, and long-time dynamics of nonlinear PDEs, serving as a testbed for methods in dissipative systems, control, and inverse problems \cite{chen1998numerical,mielke2002ginzburg,santos2019insensitizing,borzi2005analysis,junge2000synchronization}. 

Recovering sources and parameters in GL-type models is crucial in practice: in optics, it enables diagnosing distributed gain/loss or saturable absorbers in cavity models from end-of-pulse measurements; in chemical/biological pattern formation, it helps infer spatially localized forcing or heterogeneities driving wave patterns; in fluid and plasma contexts, it allows one to back-out effective forcing or feedback from limited snapshots near transitions to turbulence. Because the GL framework is a universal amplitude model, successful inversion translates across disciplines, providing interpretable maps of where and how energy is injected or dissipated. From a computational perspective, adjoint-based Tikhonov schemes scale to high-dimensional discretizations and accommodate realistic noise models \cite{Cross1993}, \cite{Aranson2002}, \cite{MR2516528}.

Within this broad literature, inverse problems for GL-type models remain relatively less explored. Classical studies on inverse source problems for parabolic or Schr\"odinger-type equations \cite{hasanouglu2021introduction,hasanov2007simultaneous,GarciaOssesTapia+2013+755+779,chorfi2025identification} provide methodological foundations, yet the adaptation to GL equations, which combine diffusion, dispersion, and nonlinearity, poses unique analytical and numerical challenges.

We also note that other GL-type models, such as those with dynamic boundary conditions, have recently attracted attention in the context of controllability \cite{carreno2025local}. While these studies highlight the richness of the GL framework under more complex boundary interactions, the analysis of inverse problems in such settings remains largely open. Our contribution is thus complementary, as we address the identification of space-time dependent source terms for the standard GL equation, providing a first step toward broader inverse problem formulations in extended GL models.

It is worth emphasizing that the linear GL equation can also be expressed as a system of two coupled real-valued PDE equations, corresponding to the real and imaginary parts of the complex amplitude. In this formulation, the coupling terms are of second order and involve cross-derivative operators, which profoundly alter the analytical structure of the system. As a consequence, classical techniques from the theory of control and inverse problems for scalar parabolic equations (based on Carleman estimates and observability inequalities \cite{fursikov1996carleman}) cannot be directly applied. In particular, the presence of complex coefficients and dispersive-type interactions requires adapted Carleman weights, refined energy estimates, and the development of new stability results tailored to mixed dissipative-dispersive operators.  

The present work is intended as a first step toward a comprehensive theory of inverse problems for GL-type equations, focusing here on the standard setting with Dirichlet boundary conditions.

\subsection{The input-output operator}
To formalize the inverse problem, we introduce the input-output operator mapping the source term to the final-state of the system. This operator provides a compact representation of the forward model and allows for a variational reformulation of the inverse problem.

Let $\mathcal{U}\subset L^2(0,T;L^2(\Omega))$ be a non-empty set, which is supposed to be bounded, closed and convex. In addition, we define the operator $\Psi: \mathcal{U}\to L^2(\Omega)$ in the following way:
\begin{align*}
    \Psi(f)=y(\cdot,T)\quad f\in \mathcal{U},
\end{align*}
where $y=y(x,t)$ is the solution of \eqref{eq:intro:01} associated to the source term $f$.

We point out that, the {\bf (ISP)} can be formulated in terms of $\Psi$ as the following functional equation:
\begin{align}
    \label{eq:Input-output:op}
    \Psi(f)=u_T(x),\quad x\in \Omega,\quad  f\in \mathcal{U}. 
\end{align}

We emphasize that, due to measurement errors in the output $u_T$, exact equality in the equation \eqref{eq:Input-output:op} cannot be satisfied in general. Moreover, we have the following result
\begin{proposition}
    \label{Proposition:Phi:compact}
    The input-output operator $\Psi:L^2(0,T;L^2(\Omega))\to L^2(\Omega)$ is compact.
\end{proposition}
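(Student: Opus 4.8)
The goal is to prove that the input-output operator $\Psi(f) = y(\cdot, T)$ is compact as a map from $L^2(0,T;L^2(\Omega))$ to $L^2(\Omega)$. The natural strategy is to exploit the parabolic smoothing inherent in the Ginzburg-Landau operator: even though the source only lies in $L^2$ in time, the solution at the final time $t = T$ should enjoy extra spatial regularity, landing in a space that embeds compactly into $L^2(\Omega)$.

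Let me sketch the key idea. The operator $\Psi$ is **linear** in $f$ (the equation is linear in $y$, and $\Psi$ depends on $f$ through $y$ with fixed data $a, b, \vec r, p, y_0$). To see this clearly, it is cleaner to split $y = y_{\text{hom}} + y_f$, where $y_{\text{hom}}$ solves the homogeneous problem (with $f = 0$, initial data $y_0$) and $y_f$ solves the problem with zero initial data and source $f$. Since $y_{\text{hom}}(\cdot, T)$ is a fixed element of $L^2(\Omega)$ independent of $f$, compactness of $\Psi$ reduces to compactness of the linear map $f \mapsto y_f(\cdot, T)$.

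**Let me think about how to prove compactness of the linear part.** I see two viable routes.

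**Route 1 (direct, via smoothing).** I would like to show that $y_f(\cdot, T)$ actually lies in $H_0^1(\Omega)$, with a bound $\|y_f(\cdot,T)\|_{H^1_0(\Omega)} \le C \|f\|_{L^2(0,T;L^2(\Omega))}$. Because the embedding $H_0^1(\Omega) \hookrightarrow L^2(\Omega)$ is compact (Rellich–Kondrachov, using that $\Omega$ is bounded with $C^2$ boundary), this would immediately give compactness of $\Psi$ as a map into $L^2(\Omega)$. The parabolic smoothing should indeed push the final-time value into $H_0^1$: this is analogous to the standard analytic-semigroup estimate $\|e^{-(T-s)A} f(s)\|_{H^1} \lesssim (T-s)^{-1/2} \|f(s)\|_{L^2}$, whose $L^1$-in-time singularity $\int_0^T (T-s)^{-1/2}\,ds$ is integrable. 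The difficulty here is making this rigorous for the Ginzburg-Landau operator with complex coefficients: one must verify that $-\operatorname{div}((a+ib)\nabla \cdot)$ generates an analytic semigroup and has the requisite smoothing, which follows from sectoriality but requires the authors' existence/regularity machinery in the Appendix.

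**Route 2 (Aubin–Lions, more robust).** A cleaner argument that avoids pointwise-in-time smoothing estimates is the following. Let $(f_n)$ be a bounded sequence in $L^2(0,T;L^2(\Omega))$; I want to extract a subsequence for which $y_{f_n}(\cdot, T)$ converges in $L^2(\Omega)$. The weak-solution theory (stated in the excerpt) gives the uniform a priori bound
\begin{align*}
    \|y_{f_n}\|_{L^2(0,T;H_0^1(\Omega))} + \|y_{f_n}\|_{C^0([0,T];L^2(\Omega))} \le C \|f_n\|_{L^2(0,T;L^2(\Omega))} \le C',
\end{align*}
and from the equation one reads off a uniform bound on $\pt y_{f_n}$ in $L^2(0,T;H^{-1}(\Omega))$. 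By the Aubin–Lions–Simon lemma, the embedding
\begin{align*}
    \{ y : y \in L^2(0,T;H_0^1(\Omega)),\ \pt y \in L^2(0,T;H^{-1}(\Omega)) \} \hookrightarrow C^0([0,T];L^2(\Omega))
\end{align*}
is compact (using the compact inclusion $H_0^1 \hookrightarrow L^2 \hookrightarrow H^{-1}$). Hence a subsequence $y_{f_{n_k}} \to y$ in $C^0([0,T];L^2(\Omega))$, and evaluating at $t = T$ gives $y_{f_{n_k}}(\cdot, T) \to y(\cdot, T)$ in $L^2(\Omega)$. This establishes that $\Psi$ maps bounded sets to precompact sets, i.e. $\Psi$ is compact.

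**The main obstacle** is justifying the $L^2(0,T;H^{-1})$ bound on the time derivative and, if one prefers Route 1, verifying the analytic-semigroup smoothing for the complex-coefficient operator; both rest on the well-posedness and energy estimates that the paper develops in its Appendix. Given those, I would favor Route 2, since the Aubin–Lions argument is the most transparent and sidesteps delicate pointwise-in-time estimates, yielding convergence in $C^0([0,T];L^2(\Omega))$ and therefore at $t=T$ directly. I would present the linearity reduction first, then invoke the a priori bounds, then close with Aubin–Lions and trace evaluation at $t = T$.
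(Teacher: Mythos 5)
Your Route~1 is essentially the paper's own proof: the authors invoke the strong-solution result (Proposition~\ref{proposition:existence:strong:solutions}) to obtain a uniform bound on the solutions in $C^0([0,T];H_0^1(\Omega))$, hence a uniform $H_0^1(\Omega)$ bound on $y(\cdot,T;f_k)$, and then conclude by the compact embedding $H_0^1(\Omega)\hookrightarrow L^2(\Omega)$. Had you committed to that route, the argument would be fine, and the smoothing you worry about requires no semigroup machinery: after splitting off the fixed contribution of $y_0$ as you propose, the remaining part has zero initial datum (which trivially lies in $H_0^1(\Omega)$) and source $f\in L^2(0,T;L^2(\Omega))$, so the appendix estimate directly gives $\|y_f\|_{C^0([0,T];H_0^1(\Omega))}\leq C\|f\|_{L^2(0,T;L^2(\Omega))}$.

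The route you say you would actually write up, Route~2, contains a genuine gap. With the bounds you have --- $y_{f_n}$ bounded in $L^2(0,T;H_0^1(\Omega))$ and $\pt y_{f_n}$ bounded in $L^2(0,T;H^{-1}(\Omega))$ --- the Aubin--Lions--Simon lemma yields relative compactness in $L^2(0,T;L^2(\Omega))$, \emph{not} in $C^0([0,T];L^2(\Omega))$. The variant of Simon's theorem that produces compactness in $C^0([0,T];B)$ requires a uniform bound in $L^\infty(0,T;X)$ with $X$ compactly embedded in $B$; your uniform bound in $C^0([0,T];L^2(\Omega))$ lives in $B=L^2(\Omega)$ itself and does not help. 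Moreover, strong convergence in $L^2(\Omega\times(0,T))$ gives no control on the trace at the single time $t=T$. The claimed compact embedding is in fact false: on $\Omega=(0,\pi)$ take normalized Dirichlet eigenfunctions $w_k$ with eigenvalues $k^2$ and set $y_k(x,t)=\chi_k(t)w_k(x)$ with $\chi_k(t)=\max\{0,\,1-k^2(T-t)\}$; then
\begin{align*}
\|y_k\|_{L^2(0,T;H_0^1)}^2\leq C k^2\cdot k^{-2}=C,\qquad
\|\pt y_k\|_{L^2(0,T;H^{-1})}^2\leq C k^4\cdot k^{-2}\cdot k^{-2}=C,
\end{align*}
yet $y_k(\cdot,T)=w_k$ is an orthonormal sequence with no convergent subsequence in $L^2(\Omega)$. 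So the compactness of $\Psi$ genuinely hinges on the extra final-time regularity supplied by the strong-solution theory (your Route~1, which is the paper's route); the weak-solution bounds alone do not suffice.
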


\begin{proof}
    Let $(f_k)_{k\in \mathbb{N}}$ be a bounded sequence in $L^2(\Omega\times (0,T))$. Then, by Proposition \ref{proposition:existence:strong:solutions}, the sequence of associated weak solutions $(y(\cdot,,\cdot,f_k))_{k\in \mathbb{N}}$ is bounded in $C^0([0,T];H_0^1(\Omega))$. In particular, the sequence $(Y_k)_{k\in \mathbb{N}}$ given by $Y_k:=y(\cdot,T,f_k)$ is bounded in $H_0^1(\Omega)\cap H^2(\Omega)$. Thus, using the Sobolev-Gagliardo-Nirenberg compact embedding $H_0^1(\Omega) \hookrightarrow L^2(\Omega)$, there exists a subsequence of $ (Y_k)_{k\in \mathbb{N}}$ which converges strongly in $L^2(\Omega)$. This implies that the input-output operator $\Psi$ is compact and the proof of Proposition  \ref{Proposition:Phi:compact} is finished. 
\end{proof}

In view of the Proposition \ref{Proposition:Phi:compact}, it is evident that the inverse problem {\bf (ISP)} is ill-posed in the sense of Hadamard. For this reason, one needs to introduce the functional $\mathcal{J}:\mathcal{U}\to \mathbb{R}$ by
\begin{align}
    \label{def:functional:J}
    \mathcal{J}(f):=\frac{1}{2}\int_\Omega |\Psi(f)-u_T|^2\,dx,
\end{align}
and reformulate {\bf (ISP)} in terms of the quasi-solution method, i.e., we minimize the following extremal problem
\begin{align*}
    \mathcal{J}(f^\star)=\inf_{f\in \mathcal{U}} \mathcal{J}(f).
\end{align*}

Since the operator $\Psi$ is compact, small perturbations in the data may cause large variations in the reconstructed source. Therefore, regularization becomes essential to obtain stable approximations. In this context, it is customary to consider a regularized Tikhonov version of the functional $J$ in \eqref{def:functional:J}. For $\epsilon>0$, we introduce the regularized functional
\begin{align}\label{regfunct}
    \mathcal{J}_\epsilon (f):=\frac{1}{2}\int_\Omega |\Psi(f)-u_T|^2\,dx + \frac{\epsilon}{2}\int_0^T\int_\Omega |f|^2\,dx\,dt ,\quad f\in \mathcal{U}. 
\end{align}

\subsection{Outline}
The rest of the paper is as follows. In Section \ref{section:Frechet:formula}, we focus on the properties of $J$, i.e., a detailed characterization of its Fr\'echet derivative via a suitable adjoint system. In Section \ref{section:Existence:ISP}, we give sufficient conditions for the existence and uniqueness of quasi-solutions to {\bf (ISP)}. In Section \ref{section:Numerical:experiments}, we validate our theoretical results by some numerical experiments to reconstruct source terms for 1-D and 2-D case. Finally, in Section \ref{section:Summary:perspectives} we give additional comments concerning the theoretical and numerical results obtained in this article.

\section{Fr\'echet differentiability and gradient formula}
\label{section:Frechet:formula}


The differentiability of the misfit functional is central to the design of iterative reconstruction methods. Establishing explicit gradient representations not only ensures theoretical well-posedness of descent algorithms but also provides a concrete path to numerical implementation. We therefore begin by characterizing the gradient of the functional via a suitable adjoint system.

\begin{proposition}
    \label{proposition:Frechet:formula}
    Consider the assumptions {\bf (H1)} and {\bf (H2)}. Then, the functional $\mathcal{J}:\mathcal{U}\to \mathbb{R}$ is Fr\'echet differentiable and its gradient at each $f\in \mathcal{U}$ is given by 
    \begin{align}
        \label{eq:frechet:gradient:formula}
        \mathcal{J}'(f)=\phi,\quad f\in \mathcal{U},
    \end{align}
    where $\phi$ is the unique weak solution of the following adjoint system 
    \begin{align}
        \label{problem:Frechet:dif}
        \begin{cases}
            -\pt \phi -\text{div}((a(x)-ib(x))\nabla \phi) - \overline{\vec{r}(x)}\cdot \nabla \phi  +(\overline{p(x)}- \overline{\text{div}(\vec{r})(x))}\phi =0&\text{ in }\Omega\times (0,T),\\
            \phi=0&\text{ on }\partial\Omega \times (0,T),\\
            \phi(\cdot,T)=y(\cdot,T;f)-u_T&\text{ in }\Omega. 
        \end{cases}
    \end{align}
\end{proposition}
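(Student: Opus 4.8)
The plan is to exploit the fact that \eqref{eq:intro:01} is \emph{linear} in the state $y$, so that the input-output operator is affine in $f$. Writing $\Psi(f)=Sf+w$, where $S\colon L^2(0,T;L^2(\Omega))\to L^2(\Omega)$ is the bounded linear operator sending a source $h$ to the final trace $z(\cdot,T)$ of the weak solution $z$ of \eqref{eq:intro:01} with source $h$, homogeneous Dirichlet data and \emph{zero} initial datum, and where $w=y(\cdot,T)$ is the final trace corresponding to source $0$ and initial datum $y_0$. The boundedness of $S$ and the existence of the trace at $t=T$ follow from the well-posedness recalled in Appendix \ref{section:appendix:existence:uniqueness:results}, which gives $z\in C^0([0,T];L^2(\Omega))\cap L^2(0,T;H_0^1(\Omega))$.

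Granting this, the Fr\'echet derivative can be computed directly. Regarding the complex space $L^2(\Omega)$ as a real Hilbert space with inner product $\operatorname{Re}\int_\Omega u\bar v\,dx$, for a perturbation $h$ one has
\begin{align*}
    \mathcal{J}(f+h)-\mathcal{J}(f)=\operatorname{Re}\int_\Omega (\Psi(f)-u_T)\,\overline{Sh}\,dx+\frac12\int_\Omega |Sh|^2\,dx .
\end{align*}
Since $\norm{Sh}_{L^2(\Omega)}\leq C\norm{h}_{L^2(0,T;L^2(\Omega))}$, the quadratic term is $o(\norm{h})$, so $\mathcal{J}$ is Fr\'echet differentiable with
\begin{align*}
    \mathcal{J}'(f)[h]=\operatorname{Re}\int_\Omega \big(y(\cdot,T;f)-u_T\big)\,\overline{z(\cdot,T)}\,dx .
\end{align*}

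Next I introduce the adjoint state $\phi$ solving \eqref{problem:Frechet:dif}. Its well-posedness follows from the time reversal $t\mapsto T-t$, which recasts the backward problem as a forward GL-type equation with coefficients $a-ib$, $-\overline{\vec{r}}$ and $\overline{p}-\overline{\text{div}(\vec{r})}$, so that the same existence theory applies; note that the terminal datum $y(\cdot,T;f)-u_T$ belongs to $L^2(\Omega)$ because $y\in C^0([0,T];L^2(\Omega))$. The key step is the duality identity. Pairing the equation for $z$ against $\phi$ and integrating by parts in space and time, all spatial boundary integrals vanish since $z=\phi=0$ on $\partial\Omega$; the operator in \eqref{problem:Frechet:dif} is exactly the formal adjoint of the forward operator (obtained by transposing the divergence and transport terms and conjugating the complex coefficients), so choosing $\phi$ to solve \eqref{problem:Frechet:dif} cancels all interior contributions. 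Using $z(\cdot,0)=0$, only the terminal pairing survives, yielding
\begin{align*}
    \int_0^T\!\!\int_\Omega h\,\overline{\phi}\,dx\,dt=\int_\Omega z(\cdot,T)\,\overline{\phi(\cdot,T)}\,dx=\int_\Omega z(\cdot,T)\,\overline{\big(y(\cdot,T;f)-u_T\big)}\,dx .
\end{align*}
Taking real parts and comparing with the expression for $\mathcal{J}'(f)[h]$ gives $\mathcal{J}'(f)[h]=\operatorname{Re}\int_0^T\int_\Omega \phi\,\overline{h}\,dx\,dt$, so the Riesz representative (the gradient) of $\mathcal{J}'(f)$ in $L^2(0,T;L^2(\Omega))$ is precisely $\phi$, which is \eqref{eq:frechet:gradient:formula}.

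The main obstacle is the rigorous justification of this integration-by-parts identity at the level of weak solutions, where $\pt z$ and $\pt\phi$ only belong to $L^2(0,T;H^{-1}(\Omega))$ and the pointwise manipulations above are not literally valid. I would handle this through the standard integration-by-parts formula for the Gelfand triple $H_0^1(\Omega)\subset L^2(\Omega)\subset H^{-1}(\Omega)$, which guarantees that $t\mapsto \int_\Omega z(t)\overline{\phi(t)}\,dx$ is absolutely continuous with derivative $\langle \pt z,\phi\rangle+\langle z,\pt \phi\rangle$; equivalently, one first establishes the identity for strong solutions (available from the appendix) and passes to the limit by density in the data. The bookkeeping of complex conjugates in the transport and zeroth-order terms, needed to confirm that the adjoint operator is exactly the one in \eqref{problem:Frechet:dif}, is then a routine but careful computation.
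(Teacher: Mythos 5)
Your proposal is correct and follows essentially the same route as the paper: expand the quadratic misfit into a linear term plus an $O(\norm{h}^2)$ remainder, represent the linear term through the sensitivity solution with zero initial datum (your $Sh$ is the paper's $\delta y$), and convert it into an $L^2(0,T;L^2(\Omega))$ pairing with the adjoint state by duality. If anything, you are more explicit than the paper about the well-posedness of the backward adjoint problem (via time reversal) and the rigorous justification of the integration by parts (via the Gelfand triple or strong solutions plus density), points the paper dispatches by simply invoking the strong-solution regularity from its appendix.
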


The adjoint formulation obtained in Proposition \ref{proposition:Frechet:formula} mirrors the standard duality principles commonly used in PDE-constrained optimization. Its importance lies in reducing the computational cost: instead of computing directional derivatives for each perturbation, the gradient can be evaluated by solving a single adjoint problem, making the approach scalable to high-dimensional discretizations.

\begin{proof}
    let us consider $f,\delta f\in \mathcal{U}$ such that $f+\delta f\in \mathcal{U}$. Then, our task is to compute the difference
    \begin{align*}
        \delta \mathcal{J}(f):=&\mathcal{J}(f+\delta f)-\mathcal{J}(f)\\
        =&\frac{1}{2}\int_\Omega \left(|y(\cdot,T;f+\delta f)-u_T|^2 - |y(\cdot,T;f)-u_T|^2  \right)\,dx.
    \end{align*}

Using the complex identity
\begin{align*}
    \frac{1}{2}(|x-z|^2-|y-z|^2)=\Re [(x-y)\overline{(y-z)}] +\frac{1}{2} |x-y|^2,\quad \forall x,y,z\in \mathbb{C},
\end{align*}
we have 
\begin{align}
    \nonumber 
    \delta \mathcal{J}(f)=&\Re \int_\Omega (y(\cdot,T;f+\delta f)-y(\cdot,T;f)) \overline{y(\cdot,T;f)-u_T}\,dx \\
    \nonumber 
    &+\frac{1}{2}\int_\Omega |y(\cdot,T;f+\delta f)-y(\cdot,T;f)|^2\,dx \\
    \label{eq:Frechet:for:01} 
    =&-\Re \int_\Omega \delta y(\cdot,T;f)\overline{\phi (\cdot,T;f)}\,dx +\frac{1}{2}\int_\Omega |\delta y(\cdot,T;f)|^2\,dx 
\end{align}
where $\phi(\cdot,\cdot,f)$ is the weak solution of \eqref{problem:Frechet:dif} (and thanks to Proposition \ref{proposition:existence:weak:solutions}, $\phi(\cdot,T,f)=y(\cdot,T;f)-u_T$ in $L^2(\Omega)$) and $\delta y$ is the solution of the following \textit{sensitivity problem}
\begin{align}
    \label{eq:sensitivity:problem}
    \begin{cases}
        \pt \delta y -\text{div}((a(x)+b(x)i)\nabla \delta y)+\vec{r}(x)\cdot \nabla \delta y +  p(x)\delta y=\delta f(x,t)&\text{ in }\Omega\times (0,T),\\
        \delta y=0&\text{ on }\partial \Omega\times (0,T),\\
        \delta y(\cdot,0)=0&\text{ in }\Omega. 
    \end{cases}
\end{align}

Observe that by Proposition \ref{proposition:existence:strong:solutions}, $\delta y$ is indeed a strong solution of \eqref{eq:sensitivity:problem}. Then, integration by parts implies that \eqref{eq:Frechet:for:01} can be written as 
\begin{align}
    \label{form:frechet:grad:var:for}
    \delta \mathcal{J}(f):=\Re \int_0^T\int_\Omega \delta f\overline{\phi}\,dx\,dt +\frac{1}{2}\int_\Omega |\delta y(\cdot,T;f)|^2\,dx.
\end{align}

Now, identity \eqref{form:frechet:grad:var:for} implies the assertion \eqref{eq:frechet:gradient:formula}.
\end{proof}

\begin{lemma}
    \label{Lemma:Lipschitz:gradient}
    Under the assumptions {\bf (H1)} and {\bf (H2)}, the Fr\'echet gradient $\mathcal{J}'$ is Lipschitz continuous in $\mathcal{U}$, i.e., there exists a constant $M>0$ such that for all $f,\delta f\in \mathcal{U}$ such that $f+\delta f\in \mathcal{U}$, the following estimate holds:
    \begin{align*}
        |\mathcal{J}'(f+\delta f) - \mathcal{J}'(f)|\leq M\|\delta f\|_{L^2(\Omega\times (0,T))}.
    \end{align*}
\end{lemma}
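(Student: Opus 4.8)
The plan is to exploit the linearity of the adjoint system together with the continuous-dependence estimates for the forward and adjoint equations recalled in Appendix~\ref{section:appendix:existence:uniqueness:results}. First I would note that the gradient formula \eqref{eq:frechet:gradient:formula} identifies $\mathcal{J}'(f)$ with the adjoint state $\phi(\cdot,\cdot;f)\in L^2(\Omega\times(0,T))$, so that the left-hand side of the claimed inequality is understood in the $L^2(\Omega\times(0,T))$ norm. Since the adjoint operator in \eqref{problem:Frechet:dif}, its boundary condition, and all its coefficients are independent of $f$, and only the terminal datum $\phi(\cdot,T;f)=y(\cdot,T;f)-u_T$ depends on $f$, the difference $\Phi:=\phi(\cdot,\cdot;f+\delta f)-\phi(\cdot,\cdot;f)$ is itself the unique weak solution of the homogeneous adjoint system with terminal condition $\Phi(\cdot,T)=y(\cdot,T;f+\delta f)-y(\cdot,T;f)$, the datum $u_T$ having cancelled.

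Next I would identify this terminal datum with the sensitivity state. By the definition of the sensitivity problem \eqref{eq:sensitivity:problem}, whose solution $\delta y$ depends linearly on $\delta f$ through a vanishing initial condition, one has $y(\cdot,T;f+\delta f)-y(\cdot,T;f)=\delta y(\cdot,T)$, hence $\Phi(\cdot,T)=\delta y(\cdot,T)$. The estimate then reduces to chaining two continuous-dependence bounds. For the adjoint state, reversing time via $\tau=T-t$ turns \eqref{problem:Frechet:dif} into a forward parabolic Cauchy problem; testing with $\overline{\Phi}$ and taking real parts, the principal term is coercive because $\Re(a-ib)=a\geq a_\star>0$, the first- and zeroth-order complex terms are absorbed by Young's inequality, and Gr\"onwall's lemma yields the backward energy estimate
\[
\norm{\Phi}_{L^2(0,T;L^2(\Omega))}\leq C_{\mathrm{adj}}\,\norm{\Phi(\cdot,T)}_{L^2(\Omega)}.
\]
For the forward sensitivity state, the energy estimate behind Proposition~\ref{proposition:existence:weak:solutions} gives
\[
\norm{\delta y(\cdot,T)}_{L^2(\Omega)}\leq \norm{\delta y}_{C^0([0,T];L^2(\Omega))}\leq C_{\mathrm{sens}}\,\norm{\delta f}_{L^2(\Omega\times(0,T))}.
\]

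Combining the two displays with $\Phi(\cdot,T)=\delta y(\cdot,T)$ yields the claim with $M=C_{\mathrm{adj}}C_{\mathrm{sens}}$. The one point requiring care—and the step I would treat as the main obstacle—is the energy estimate for the adjoint equation, since the complex and dispersive coefficients $a-ib$, $\overline{\vec r}$, $\overline p$ and $\overline{\text{div}(\vec r)}$ preclude a direct appeal to the classical real-valued parabolic theory. However, because only the real part $a$ of the principal coefficient survives the coercivity step after taking real parts, and $a\geq a_\star>0$, the standard Galerkin argument recalled in the appendix applies with the lower-order complex terms controlled by Cauchy--Schwarz and Young's inequality. This is exactly the well-posedness content already established for \eqref{eq:intro:01} and its time-reversed adjoint, so no analytical machinery beyond invoking those estimates is needed.
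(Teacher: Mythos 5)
Your proposal is correct and follows essentially the same route as the paper: the paper likewise observes that the difference of gradients is the solution $\delta\phi$ of the homogeneous adjoint system with terminal datum $\delta y(\cdot,T;f)$, and then chains the backward continuous-dependence estimate for the adjoint with the forward estimate $\|\delta y(\cdot,T)\|_{L^2(\Omega)}\leq C\|\delta f\|_{L^2(\Omega\times(0,T))}$ from the sensitivity problem. Your additional discussion of why coercivity survives the complex coefficients (only $\Re(a\pm ib)=a\geq a_\star$ matters after taking real parts) is exactly the content the paper delegates to its Appendix well-posedness propositions.
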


We point out that the Lipschitz continuity of the gradient guarantees the stability of iterative schemes such as conjugate gradient or quasi-Newton methods, making the result crucial for practical computations. For more details, see Section \ref{section:Numerical:experiments}.

The Lipschitz continuity of the gradient ensures that iterative minimization schemes converge under standard step-size rules. In practice, this property prevents instability in the optimization process and justifies the use of line-search or trust-region strategies in the reconstruction algorithm.

\begin{proof}
    Let us fix $f,\delta f\in \mathcal{U}$ such that $f+\delta f\in \mathcal{U}$. Then, the function $\delta \phi=\delta \phi (\cdot,\cdot,f)$ is the solution of
    \begin{align}
        \label{eq:Lipschitz:01}
        \begin{cases}
            -\pt \delta \phi -\text{div}((a(x)-ib(x))\nabla \phi) - \overline{\vec{r}(x)}\cdot \nabla \phi +(\overline{p(x)}-\overline{\text{div}(\vec{r}(x))}\phi =0&\text{ in }\Omega\times (0,T),\\
            \delta \phi =0&\text{ on }\partial \Omega \times (0,T),\\
            \delta \phi(\cdot,T)=\delta y(\cdot,T;f)&\text{ in }\Omega, 
        \end{cases}
    \end{align}
    where $\delta y$ is the solution of \eqref{problem:Frechet:dif} associated to $\delta f$. We remark that, by Proposition \ref{proposition:existence:strong:solutions}, $\delta \phi$ is a strong solution of \eqref{eq:Lipschitz:01}. Thus, by Proposition \ref{proposition:Frechet:formula} we have, for some constant $C>0$:
    \begin{align*}
        |\mathcal{J}'(f+\delta f)-\mathcal{J}'(f)|= \|\delta \phi\|_{L^2(\Omega\times (0,T))}\leq C\|\delta y(\cdot,T;f)\|_{L^2(\Omega)}\leq C\|\delta f\|_{L^2(0,T;L^2(\Omega))},
    \end{align*}
    where we have used the continuity of the strong solutions applied to $\delta y$ respect to the data. This proves the assertion of the Lemma \ref{Lemma:Lipschitz:gradient}.  
\end{proof}

\section{Existence and uniqueness of the solution to (ISP)}
\label{section:Existence:ISP}


Having established the differentiability of the functional, we now turn to the fundamental question of solvability of the {\bf (ISP)}. Existence of quasi-solutions ensures that our variational formulation is mathematically meaningful, while uniqueness conditions determine whether the reconstruction is well-posed or only identifiable up to certain ambiguities.

Before going further, we prove the following
\begin{lemma}
    \label{lemma:continuity:solution:source}
    Let $y_0\in L^2(\Omega)$ and let $y$ be the weak solution of \eqref{eq:intro:01} corresponding to $f$. Then, the solution map $f\mapsto y$ is continuous from $L^2(0,T;L^2(\Omega))$ to $C^0([0,T];L^2(\Omega)) \cap L^2(0,T;H_0^1(\Omega))$.
\end{lemma}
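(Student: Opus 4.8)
The plan is to exploit the \emph{linearity} of \eqref{eq:intro:01} in the pair $(y_0,f)$. Since $y_0$ is held fixed, the map $f\mapsto y$ is affine, so it suffices to show that its associated linear part is bounded. Concretely, given $f_1,f_2\in L^2(0,T;L^2(\Omega))$ with weak solutions $y_1,y_2$ (both with initial datum $y_0$), the difference $w:=y_1-y_2$ is the weak solution of \eqref{eq:intro:01} with source $g:=f_1-f_2$ and zero initial datum. Continuity of $f\mapsto y$ is therefore equivalent to the a priori estimate
\begin{align}
    \label{eq:plan:energy}
    \norm{w}_{C^0([0,T];L^2(\Omega))}+\norm{w}_{L^2(0,T;H_0^1(\Omega))}\leq C\,\norm{g}_{L^2(0,T;L^2(\Omega))},
\end{align}
with $C=C(\Omega,T,a_\star,\|a\|_\infty,\|b\|_\infty,\|\vec r\|_\infty,\|p\|_\infty)$ independent of $g$. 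Observe that this in fact yields Lipschitz continuity, which is stronger than the claimed continuity.

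The core is the energy estimate \eqref{eq:plan:energy}, which I would obtain by the standard multiplier method adapted to the complex setting. Testing the equation for $w$ against $\overline{w}$, integrating over $\Omega$, and using the Dirichlet condition to integrate the principal term by parts, the key algebraic observation is that
\begin{align*}
    \Re\int_\Omega (a+ib)|\nabla w|^2\,dx=\int_\Omega a\,|\nabla w|^2\,dx\geq a_\star\norm{\nabla w}_{L^2(\Omega)}^2,
\end{align*}
so that the imaginary diffusion coefficient drops out upon taking real parts and coercivity is furnished by {\bf (H1)}. Writing $\Re\int_\Omega \pt w\,\overline w\,dx=\tfrac12\tfrac{d}{dt}\norm{w}_{L^2(\Omega)}^2$, the drift and potential terms are controlled by Cauchy--Schwarz and Young's inequality using $\vec r\in[W^{1,\infty}(\Omega)]^N$ and $p\in L^\infty(\Omega)$; in particular the gradient factor in $\int_\Omega(\vec r\cdot\nabla w)\overline w\,dx$ is absorbed into $a_\star\norm{\nabla w}_{L^2(\Omega)}^2$ at the cost of a multiple of $\norm{w}_{L^2(\Omega)}^2$. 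This produces the differential inequality
\begin{align*}
    \tfrac12\tfrac{d}{dt}\norm{w}_{L^2(\Omega)}^2+\tfrac{3a_\star}{4}\norm{\nabla w}_{L^2(\Omega)}^2\leq C\norm{w}_{L^2(\Omega)}^2+\tfrac12\norm{g}_{L^2(\Omega)}^2.
\end{align*}

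From here, dropping the gradient term and applying Gr\"onwall's lemma (with $w(0)=0$) gives the $C^0([0,T];L^2(\Omega))$ bound; integrating the full inequality over $(0,T)$ and inserting that bound into the $\norm{w}_{L^2(\Omega)}^2$ term on the right yields the $L^2(0,T;H_0^1(\Omega))$ bound, and together they give \eqref{eq:plan:energy}, hence continuity of the affine map $f\mapsto y$. The only point requiring care is rigor: for weak solutions $\pt w$ lies only in $L^2(0,T;H^{-1}(\Omega))$, so the pairing $\langle\pt w,\overline w\rangle=\tfrac12\tfrac{d}{dt}\norm{w}_{L^2(\Omega)}^2$ and the test with $\overline w$ are a priori formal. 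I would justify them either by carrying out the computation at the level of the Galerkin/strong approximations from Appendix \ref{section:appendix:existence:uniqueness:results} and passing to the limit, or by invoking the standard Lions--Magenes duality identity valid for $w\in L^2(0,T;H_0^1(\Omega))$ with $\pt w\in L^2(0,T;H^{-1}(\Omega))$. This is the main, though routine, obstacle; the remainder is the elementary energy bookkeeping indicated above.
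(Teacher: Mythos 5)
Your argument is correct and takes essentially the same route as the paper: both exploit linearity to reduce to the zero-initial-datum problem for the difference $w$ with source $g=f_1-f_2$, and then conclude from the standard energy estimate $\norm{w}_{C^0([0,T];L^2(\Omega))}+\norm{w}_{L^2(0,T;H_0^1(\Omega))}\leq C\norm{g}_{L^2(0,T;L^2(\Omega))}$. The only (immaterial) difference is that the paper simply cites this a priori bound from Proposition \ref{proposition:existence:weak:solutions}, whereas you re-derive it by the multiplier/Gr\"onwall argument together with the Galerkin justification you correctly flag.
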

\begin{proof}
    By density, it is sufficient to consider $y_0\in H_0^1(\Omega)$. Let $\delta f$ be a small variation of $f$ such that $f^\delta =f+\delta f\in \mathcal{U}$. Consider $\delta y=y^\delta -y$, where $y^\delta$ is the weak solution of \eqref{eq:intro:01} corresponding to $f^\delta$. Then, by Proposition \ref{proposition:existence:weak:solutions}, we know that 
    \begin{align*}
        \delta y=y(\cdot,\cdot;f^\delta) - y(\cdot,\cdot,f)\in C^0([0,T];L^2(\Omega)) \cap L^2(0,T;H_0^1(\Omega))
    \end{align*}
    and satisfy the following system 
    \begin{align}
        \label{eq:delta:y}
        \begin{cases}
            \pt \delta y -\text{div}((a(x)+b(x)i)\nabla \delta y) +\vec{r}(x)\cdot \nabla \delta y + p(x)\delta y=\delta f(x,t)&\text{ in }\Omega\times (0,T),\\
            \delta y=0&\text{ on }\partial \Omega\times (0,T),\\
            \delta y(\cdot,0)=0&\text{ in }\Omega. 
        \end{cases}
    \end{align}

    Therefore, the continuity of the solution $\delta y$ of \eqref{eq:delta:y} respect to the data
    \begin{align*}
        \|\delta y\|_{C^0([0,T]; L^2(\Omega))} + \|\delta y\|_{L^2(0,T;H_0^1(\Omega))} \leq C\|\delta f\|_{L^2(0,T;L^2(\Omega))}
    \end{align*}
    proves the assertion of Lemma \ref{lemma:continuity:solution:source}.
\end{proof}

In particular, using the gradient formula \eqref{eq:frechet:gradient:formula} and the estimates associated to the sensitivity problem \eqref{eq:sensitivity:problem}, we will prove the monotonicity of the derivative $\mathcal{J}'$. In particular, this estimate implies the convexity of the functional $\mathcal{J}$. Consequently, we have the following existence result:

\begin{corollary}
    The Tikhonov functional $\mathcal{J}$ is continuous and convex on the subset $\mathcal{U}$. Then, there exists a minimizer $f^\star\in \mathcal{F}$ such that 
    \begin{align*}
        \mathcal{J}(f^\star)=\min_{f\in \mathcal{U}} \mathcal{J}(f). 
    \end{align*}

    Since the strict convexity of $\mathcal{J}$ is characterized by the strict monotonicity of $\mathcal{J}'$. Besides, after some computations, we have the following sufficient condition for uniqueness.
\end{corollary}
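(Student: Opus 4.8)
The plan is to verify the two structural properties asserted in the statement---strong continuity and convexity of $\mathcal{J}$---and then to conclude existence of a minimizer by the direct method of the calculus of variations on the bounded, closed and convex set $\mathcal{U}$. Continuity is immediate: by Lemma~\ref{lemma:continuity:solution:source} the solution map $f\mapsto y$ is continuous from $L^2(0,T;L^2(\Omega))$ into $C^0([0,T];L^2(\Omega))$, so the trace at $t=T$ makes $\Psi$ continuous into $L^2(\Omega)$, and composing with the continuous map $g\mapsto\tfrac12\|g-u_T\|_{L^2(\Omega)}^2$ gives continuity of $\mathcal{J}$.

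For convexity I would follow the monotonicity route announced before the statement. Fix $f$ and an admissible increment $\delta f$. Using the linearity of the adjoint system \eqref{problem:Frechet:dif} with respect to its final datum, the difference $\mathcal{J}'(f+\delta f)-\mathcal{J}'(f)$ equals the adjoint state $\delta\phi$ solving \eqref{problem:Frechet:dif} with final datum $\delta y(\cdot,T;f)$, where $\delta y$ is the sensitivity solution of \eqref{eq:sensitivity:problem} driven by $\delta f$. Pairing this difference with $\delta f$ and integrating by parts in space and time---that is, exploiting the duality between \eqref{eq:sensitivity:problem} and \eqref{problem:Frechet:dif}, together with $\delta y(\cdot,0)=0$ and $\delta\phi(\cdot,T)=\delta y(\cdot,T)$---the interior contributions cancel and only the boundary-in-time term survives, yielding
\begin{align*}
\Re\int_0^T\!\!\int_\Omega \bigl(\mathcal{J}'(f+\delta f)-\mathcal{J}'(f)\bigr)\,\overline{\delta f}\,dx\,dt=\|\delta y(\cdot,T;f)\|_{L^2(\Omega)}^2\geq 0.
\end{align*}
This is precisely the monotonicity of $\mathcal{J}'$, which is equivalent to the convexity of $\mathcal{J}$; it is also consistent with the nonnegative second-order term $\tfrac12\|\delta y(\cdot,T;f)\|_{L^2(\Omega)}^2$ already isolated in \eqref{form:frechet:grad:var:for}.

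With both properties in hand I would apply the direct method. Since $L^2(0,T;L^2(\Omega))$ is a Hilbert space and $\mathcal{U}$ is bounded, closed and convex, $\mathcal{U}$ is weakly sequentially compact (reflexivity gives weak sequential compactness of bounded sets, while Mazur's lemma makes the closed convex set $\mathcal{U}$ weakly closed). As $\mathcal{J}$ is convex and strongly continuous, it is weakly lower semicontinuous. Choosing a minimizing sequence $(f_n)\subset\mathcal{U}$, extracting a weakly convergent subsequence $f_{n_k}\rightharpoonup f^\star\in\mathcal{U}$, and invoking weak lower semicontinuity gives $\mathcal{J}(f^\star)\leq\liminf_k\mathcal{J}(f_{n_k})=\inf_{\mathcal{U}}\mathcal{J}$, so $f^\star$ realizes the minimum.

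The argument is largely routine, and the only step demanding genuine care is the space-time integration by parts producing the monotonicity identity: the complex coefficients and the specific form of the adjoint operator in \eqref{problem:Frechet:dif} must be tracked so that the interior terms cancel exactly and the endpoint term collapses to $\|\delta y(\cdot,T;f)\|_{L^2(\Omega)}^2$. I would also stress that boundedness of $\mathcal{U}$ is what removes any need for a coercivity (hence regularization) hypothesis here; coercivity and the strict monotonicity of $\mathcal{J}'$ would be required only for the uniqueness statement foreshadowed at the end of the corollary.
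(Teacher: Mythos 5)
Your proof is correct and follows essentially the same strategy the paper announces (continuity via Lemma \ref{lemma:continuity:solution:source}, convexity via monotonicity of $\mathcal{J}'$ obtained from the sensitivity--adjoint duality, and existence by the direct method on the bounded, closed, convex set $\mathcal{U}$); the paper itself only sketches this argument in the surrounding text, and your write-up supplies the missing details consistently with it.
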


The convexity of the functional guarantees the existence of minimizers and provides a favorable landscape for optimization. In contrast to non-convex problems, here one avoids the proliferation of spurious local minima, which simplifies the design of reconstruction algorithms.

    \begin{lemma}
        If the positivity condition
        \begin{align} 
        \label{eq:lemma3.3}
        \int_\Omega |\delta y(\cdot,T;f)|^2\,dx >0,\quad \forall f\in \mathcal{V},
        \end{align} 
    holds on a closed convex subset $\mathcal{V}\subset \mathcal{U}$, then the problem {\bf (ISP)} admits at most one solution in $\mathcal{V}$.
    \end{lemma}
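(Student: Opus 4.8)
The plan is to deduce uniqueness from the \emph{strict convexity} of $\mathcal J$ on $\mathcal V$. Since the forward map $f\mapsto y(\cdot,\cdot;f)$ is affine (equation \eqref{eq:intro:01} is linear in $f$), the input--output operator $\Psi$ is affine and $\mathcal J$ is quadratic; convexity will therefore be read off the monotonicity of $\mathcal J'$, exactly as announced before the statement. Concretely, I would fix $f\in\mathcal V$ and an admissible direction $\delta f$ with $f+\delta f\in\mathcal V$, and compute the monotonicity bracket $\Re\langle \mathcal J'(f+\delta f)-\mathcal J'(f),\,\delta f\rangle$, aiming to show it equals $\int_\Omega|\delta y(\cdot,T;f)|^2\,dx$.

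The central step is this identity. By the gradient formula \eqref{eq:frechet:gradient:formula} and the linearity of the adjoint system \eqref{problem:Frechet:dif}, the difference $\mathcal J'(f+\delta f)-\mathcal J'(f)$ is precisely the function $\delta\phi$ solving \eqref{eq:Lipschitz:01}, i.e.\ the homogeneous adjoint equation with terminal datum $\delta\phi(\cdot,T)=\delta y(\cdot,T;f)$, where $\delta y$ solves the sensitivity problem \eqref{eq:sensitivity:problem} driven by $\delta f$. I would then pair $\delta f$ with $\overline{\delta\phi}$, substitute $\delta f$ by the left-hand side of \eqref{eq:sensitivity:problem}, and integrate by parts in space--time. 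The Dirichlet conditions make the boundary integrals vanish, the homogeneous initial datum $\delta y(\cdot,0)=0$ kills the lower time endpoint, and all volume terms reassemble into the adjoint operator acting on $\delta\phi$, which is zero by \eqref{eq:Lipschitz:01}, leaving only the terminal contribution. Using $\delta\phi(\cdot,T)=\delta y(\cdot,T;f)$, this yields
\[
\Re\int_0^T\!\!\int_\Omega \delta f\,\overline{\delta\phi}\,dx\,dt=\int_\Omega|\delta y(\cdot,T;f)|^2\,dx .
\]
The same quantity is already visible as the quadratic remainder in \eqref{form:frechet:grad:var:for}, which gives an alternative, purely second-order route.

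With the identity in hand, the positivity condition \eqref{eq:lemma3.3} says exactly that the bracket is \emph{strictly} positive for every admissible $\delta f\neq0$, so $\mathcal J'$ is strictly monotone and hence $\mathcal J$ is strictly convex on the convex set $\mathcal V$. Uniqueness follows by the usual contradiction: if $f_1\neq f_2$ were two solutions (minimizers) in $\mathcal V$, then by convexity $\tfrac12(f_1+f_2)\in\mathcal V$, and strict convexity gives $\mathcal J(\tfrac12(f_1+f_2))<\tfrac12\mathcal J(f_1)+\tfrac12\mathcal J(f_2)=\min_{\mathcal V}\mathcal J$, a contradiction. Hence {\bf (ISP)} has at most one solution in $\mathcal V$.

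I expect the main obstacle to be the integration-by-parts step in the complex-valued setting: one must track the conjugations carefully --- the coefficient $a+ib$ turning into $a-ib$, $\vec r$ into $\overline{\vec r}$, and the appearance of the correction term $\overline{\text{div}(\vec r)}$ --- in order to verify that the volume terms collapse exactly onto the homogeneous adjoint operator with no residual terms. This manipulation must be justified at the level of strong solutions, which is legitimate since Proposition \ref{proposition:existence:strong:solutions} guarantees that $\delta y$ and $\delta\phi$ possess the required regularity. A secondary point worth making explicit is the correct reading of \eqref{eq:lemma3.3}: because $\delta y$ depends only on $\delta f$ and not on the base point $f$, the condition really states that distinct sources in $\mathcal V$ produce distinct final states, i.e.\ the nondegeneracy of the final-state sensitivity map.
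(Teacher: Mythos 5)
Your proposal is correct and follows exactly the route the paper announces (but does not write out): the monotonicity bracket $\Re\langle \mathcal J'(f+\delta f)-\mathcal J'(f),\delta f\rangle$ reduces by the adjoint duality to $\int_\Omega|\delta y(\cdot,T;f)|^2\,dx$, so the positivity condition gives strict monotonicity of $\mathcal J'$, hence strict convexity of $\mathcal J$ on $\mathcal V$ and uniqueness of the minimizer. Your closing remarks on the conjugation bookkeeping, the regularity needed for the integration by parts, and the correct reading of the hypothesis (that $\delta y$ depends on $\delta f$ rather than on the base point $f$) supply precisely the ``some computations'' the paper alludes to before stating the lemma.
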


Condition \eqref{eq:lemma3.3} reflects the observability of perturbations in the final state. Intuitively, if every non-trivial source produces a distinguishable final profile, uniqueness follows. Such positivity conditions are reminiscent of unique continuation principles in PDEs and are closely tied to the richness of the data.

\section{Numerical results}\label{section:Numerical:experiments}

In this section we present the numerical scheme used to recover the complex-valued source factor \(q(x,y)\) in the multiplicative model
\begin{equation*}
    f(x,y,t)=q(x,y)\,g(t),
\end{equation*}
and report representative results for two--dimensional problems posed on rectangular domains. All computations are performed in \verb|Python| and combine second--order finite differences in space with a Crank--Nicolson (CN) discretization in time for both the forward and adjoint equations. To streamline the exposition and match the implementation, we consider constant coefficients \(a>0\), \(b\in\mathbb{R}\), and the model
\begin{align*}
    \begin{cases}
        \partial_t y - (a+i\,b)\,\Delta y + p\,y= f & \text{in } \Omega\times (0,T),\\[0.2em]
        y=0 & \text{on } \partial \Omega\times (0,T),\\[0.2em]
        y(\cdot,0)=y_0 & \text{in } \Omega,
    \end{cases}
\end{align*}
with \(\Omega=(0,L_{x_1})\times(0,L_{x_2})\subset\mathbb{R}^2\). The spatio--temporal modulation \(g\) is known and prescribed. In all experiments we enforce homogeneous Dirichlet boundary conditions on \(u\) and set \(q\equiv 0\) and \(g\equiv 0\) on \(\partial\Omega\), which implies that \(f\) vanishes on the boundary and is supported strictly in the interior.

\subsection{Discretization and numerical implementation}

We use second--order finite differences on uniform grids
\[
    x_{1,i}=i\,\Delta x_1,\quad i=0,\dots,N_{x_1}, 
    \qquad
    x_{2,j}=j\,\Delta x_2,\quad j=0,\dots,N_{x_2},
\]
with mesh sizes \(\Delta x_1=L_{x_1}/N_{x_1}\) and \(\Delta x_2=L_{x_2}/N_{x_2}\). Homogeneous Dirichlet data are imposed on \(\partial\Omega\); thus the unknowns live on the interior grid
\[
  \big\{(x_{1,i},x_{2,j}):\, i=1,\dots,N_{x_1}-1,\ j=1,\dots,N_{x_2}-1\big\}.
\]
Let \(\Delta_{h_{x_1}}\in\mathbb{R}^{(N_{x_1}-1)\times(N_{x_1}-1)}\) and
\(\Delta_{h_{x_2}}\in\mathbb{R}^{(N_{x_2}-1)\times(N_{x_2}-1)}\) denote the 1D discrete Dirichlet Laplacians \cite{MR2378550},
\[
\Delta_{h_{x_1}}=\frac{1}{\Delta x_1^{2}}\operatorname{tridiag}(1,-2,1), 
\qquad
\Delta_{h_{x_2}}=\frac{1}{\Delta x_2^{2}}\operatorname{tridiag}(1,-2,1).
\]
With lexicographic ordering, the 2D Laplacian is the Kronecker sum
\begin{equation}\label{eq:Lapl2D}
\Delta_h = I_{x_2}\otimes \Delta_{h_{x_1}} + \Delta_{h_{x_2}}\otimes I_{x_1}
\in\mathbb{R}^{m\times m},\qquad m=(N_{x_1}-1)(N_{x_2}-1),
\end{equation}
where \(I_{x_1}\) and \(I_{x_2}\) are identity matrices of sizes \(N_{x_1}-1\) and \(N_{x_2}-1\), respectively, and \(\otimes\) denotes the Kronecker product.

For the time grid we set \(t^n=n\,\Delta t\), \(n=0,\dots,N_t\), with \(\Delta t=T/N_t\).
Let \(A:=(a+i\,b)\,\Delta_h+pI_{x_1\times x_2}\). Following \cite{MR2002152}, the CN update for the interior degrees of freedom reads
\begin{equation}\label{eq:CN-forward}
M_-\,y^{n+1} = M_+\,y^{n} + \Delta t\, f^{n},
\qquad
M_\pm = I \pm \tfrac{\Delta t}{2}\,A,
\end{equation}
with \(y^0\) given by the samples of \(y_0\) on the interior grid and \(I\in\mathbb{C}^{m\times m}\). The forcing is treated by a left-point (rectangle) rule in time, consistent with our choice of quadrature below. Dirichlet boundary values vanish identically, hence they do not appear in~\eqref{eq:CN-forward}. The matrix \(M_-\) is factorized once (sparse LU) and reused at all time steps.

We approximate space--time integrals using mass--lumped rectangle rules on the interior grid. Define the spatial inner product
\[
\langle \xi,\eta\rangle_{h} := \mathrm{Re}(\xi^*\eta)\,\Delta x_1\,\Delta x_2,
\quad \xi,\eta\in\mathbb{C}^{m},
\]
and the space--time inner product
\[
\langle\!\langle X,Y\rangle\!\rangle_{h,t}
:=\sum_{n=0}^{N_t-1}\langle X^n,Y^n\rangle_h\,\Delta t,
\qquad
\|X\|_{h,t}^2:=\langle\!\langle X,X\rangle\!\rangle_{h,t}.
\]
Given target samples \(v_h^n\) on the interior grid, we consider the \emph{terminal tracking} functional
\begin{equation}\label{eq:J-discrete}
J_h(f) = \tfrac{1}{2}\,\langle y^{N_t}-v_h^{N_t},\,y^{N_t}-v_h^{N_t}\rangle_{h}
+ \tfrac{\epsilon}{2}\,\langle\!\langle f,f\rangle\!\rangle_{h,t},
\qquad \epsilon>0.
\end{equation}

The discrete adjoint associated with \eqref{eq:CN-forward} and \eqref{eq:J-discrete} is the time--reversed recursion
\begin{equation}\label{eq:CN-adjoint}
M_-^{*}\,\phi^{n} = M_+^{*}\,\phi^{n+1},\quad n=N_t-1,\dots,0,
\qquad
\phi^{N_t} = y^{N_t}-v_h^{N_t},
\end{equation}
where \((\cdot)^*\) denotes the Hermitian transpose. Since \(A=(a+i\,b)\Delta_h\), we have \(A^*=(a-i\,b)\Delta_h\), and thus
\(M_\pm^{*}=I\pm\tfrac{\Delta t}{2}\,A^*\).
As in the forward solve, \(M_-^*\) is factorized once and reused \cite{MR2516528}.

\begin{remark}
    Since \(\Re(a+i b)=a>0\) and \(\Delta_h\) is symmetric negative definite under Dirichlet conditions, the operator \(A\) is strictly dissipative. The CN scheme is \(A\)-stable and second--order accurate for the homogeneous part. With the left-point treatment of the forcing, the method remains unconditionally stable; using a trapezoidal rule for \(f\) restores second--order temporal accuracy. In all cases, the spatial discretization is second order.
\end{remark}

By standard Lagrangian arguments (or by differentiating \eqref{eq:CN-forward} directly), the discrete gradient of \(J_h\) with respect to the control at time level \(n\) is
\begin{equation}\label{eq:grad}
\frac{\partial J_h}{\partial f^{n}} =  \phi^{n+1} + \varepsilon\, f^{n}
\quad\in\mathbb{C}^{m},\qquad n=0,\dots,N_t-1,
\end{equation}
so that \(\nabla J_h(f)=\big( z^{n+1}+\epsilon\,f^{n}\big)_{n=0}^{N_t-1}\).

We minimize \(J_h\) using the Polak--Ribi\`ere\(^+\) nonlinear conjugate gradient (NCG) method with Armijo backtracking and an optional \(L^2\) projection onto a closed ball \(\{f:\|f\|_{h,t}\le \rho\}\). This follows the standard framework of PDE-constrained optimization and is summarized in Algorithm~\ref{alg:NCG} \cite{MR2244940,MR1740963,MR2516528}. To avoid notational clash with the modulation \(g\), we denote the gradient at iteration \(k\) by \(r_k:=\nabla J_h(f_k)\).

\begin{algorithm}
\begin{algorithmic}[1]
    \STATE Choose an initial guess $f_0$, set $k=0$.
    \WHILE{$\|r_k\|_{h,t} \geq \tau$ and $k<k_{\max}$}
        \STATE Solve the forward problem \eqref{eq:CN-forward} to obtain $\Psi(f_k)$, then the adjoint \eqref{eq:CN-adjoint} to obtain $\phi(f_k)$.
        \STATE Assemble the gradient $r_k^n = \phi^{n+1}(f_k) + \epsilon\,f_k^n$ for $n=0,\dots,N_t-1$.
        \STATE Set the search direction
        \[
        d_k=
        \begin{cases}
        -r_k, & \text{if } k=0 \text{ or } \langle\!\langle r_k,d_{k-1}\rangle\!\rangle_{h,t}\ge 0,\\
        -r_k + \beta_k d_{k-1}, & \text{otherwise},
        \end{cases}
        \]
        \[
        \beta_k=\max\!\left\{0,\dfrac{\langle\!\langle r_k-r_{k-1},\,r_k\rangle\!\rangle_{h,t}}{\langle\!\langle r_{k-1},\,r_{k-1}\rangle\!\rangle_{h,t}+10^{-30}}\right\}.
        \]
        \STATE Perform Armijo backtracking to find $\alpha>0$ such that
        \[
        J_h(f_k+\alpha d_k) \leq J_h(f_k) + c\,\alpha\,\langle\!\langle r_k,d_k\rangle\!\rangle_{h,t},
        \qquad c\in(0,1).
        \]
        \STATE Update $f_{k+1}=f_k+\alpha d_k$ and, if needed, project onto the $L^2$ ball; set $k\gets k+1$.
    \ENDWHILE
\end{algorithmic}
\caption{Polak--Ribi\`ere$^+$ NCG with adjoint--based gradients and Armijo line search.}
\label{alg:NCG}
\end{algorithm}

\subsection{Numerical experiments}

We now present a sequence of test cases designed to assess reconstruction accuracy, robustness to discretization parameters, and sensitivity to noise.

Unless stated otherwise, we set \(L_x=L_y=T=1\), choose \((a,b)=(36\times 10^{-4},\,15\times 10^{-4})\) and \(p=0.2 + i\,0.1\), employ uniform grids with \((N_x,N_y,N_t)=(100,100,70)\), and take the initial condition \(u_0(x,y)=\sin(\pi x)\sin(\pi y)\). The Armijo line search uses \(c=10^{-3}\), an initial step \(\alpha_0\), and a backtracking factor of \(1/2\). We restart the Polak--Ribi\`ere\(^+\) method every five iterations or whenever \(\langle\!\langle r_k,d_k\rangle\!\rangle_{h,t}\ge 0\), where \(r_k=\nabla J_h(f_k)\) denotes the gradient.

\subsubsection{Example 1 (Smooth real-valued source)}

Our first example considers a smooth real-valued source, serving as a baseline for algorithm verification.

As a benchmark, we consider the smooth real-valued source
\begin{equation*}
    q(x,y)=\sin(2\pi x)\sin(2\pi y).
\end{equation*}
Here \(\Psi(f)\) denotes the forward solution evaluated at the final time \(t=T\), and \(u_T\) the measured final state. All error metrics below are relative discrete \(L^2\) errors, using the norm induced by \(\langle\cdot,\cdot\rangle_h\).

Table~\ref{table1} reports the performance as the stopping tolerance \(\tau\) varies for fixed regularization \(\epsilon=10^{-5}\). As \(\tau\) decreases, the iteration count increases and both the data misfit \(\|\Psi(f)-u_T\|_h/\|u_T\|_h\) and the source error \(\|q_{\mathrm{rec}}-q\|_h/\|q\|_h\) decrease, as expected for a tighter termination criterion.

\begin{table}[htbp]
\caption{Effect of the stopping tolerance with \(\epsilon = 10^{-5}\).}
\label{table1}
\centering
\begin{tabular}{l | l l l}
\toprule
 \(\tau\) & iterations & \(\dfrac{\Vert \Psi(f)-u_T\Vert_{h}^{2}}{\Vert u_T\Vert_{h}^{2}}\) & \(\dfrac{\Vert q_{\mathrm{rec}}-q\Vert_{h}^{2}}{\Vert q \Vert_{h}^{2}}\) \\
\midrule
\(10^{-3}\) & 36 & \(1.2577\times 10^{-3}\) & \(1.9513\times 10^{-3}\) \\
\(10^{-4}\) & 49 & \(1.2415\times 10^{-4}\) & \(1.9262\times 10^{-4}\) \\
\(10^{-5}\) & 62 & \(2.0756\times 10^{-5}\) & \(3.2202\times 10^{-5}\) \\
\(10^{-6}\) & 76 & \(8.2193\times 10^{-6}\) & \(1.2751\times 10^{-5}\) \\
\bottomrule
\end{tabular}
\end{table}

Table~\ref{table2} shows that the algorithm is robust with respect to mesh and time refinements: the iteration counts remain essentially unchanged and the relative errors are stable across \((N_x,N_y,N_t)\).

\begin{table}[htbp]
\caption{Mesh and time sensitivity for \(\epsilon = 10^{-5}\) and \(\tau = 10^{-5}\).}
\label{table2}
\centering
\begin{tabular}{l | l l l}
\toprule
 \((N_x,N_y,N_t)\) & iterations & \(\dfrac{\Vert \Psi(f)-u_T\Vert_{h}^{2}}{\Vert u_T\Vert_{h}^{2}}\) & \(\dfrac{\Vert q_{\mathrm{rec}}-q\Vert_{h}^{2}}{\Vert q \Vert_{h}^{2}}\) \\
\midrule
\((35,35,70)\)   & 63 & \(5.6624\times 10^{-6}\) & \(8.7835\times 10^{-6}\) \\
\((50,50,70)\)   & 63 & \(5.0117\times 10^{-6}\) & \(7.7749\times 10^{-6}\) \\
\((100,100,70)\) & 62 & \(2.0756\times 10^{-5}\) & \(3.2202\times 10^{-5}\) \\
\((100,100,150)\)& 62 & \(2.0173\times 10^{-5}\) & \(3.1298\times 10^{-5}\) \\
\bottomrule
\end{tabular}
\end{table}

Table~\ref{table3} illustrates the classical behavior of Tikhonov regularization as \(\epsilon\) varies (with fixed \(\tau=10^{-5}\)). In the noise-free setting, smaller \(\epsilon\) reduces the data misfit and reconstruction error, while larger \(\epsilon\) enforces stronger smoothing at the expense of higher bias.

\begin{table}[htbp]
\caption{Effect of \(\epsilon\) with \((N_x,N_y,N_t)=(100,100,70)\) and \(\tau = 10^{-5}\).}
\label{table3}
\centering
\begin{tabular}{l | l l l}
\toprule
 \(\epsilon\) & iterations & \(\dfrac{\Vert \Psi(f)-u_T\Vert_{h}^{2}}{\Vert u_T\Vert_{h}^{2}}\) & \(\dfrac{\Vert q_{\mathrm{rec}}-q\Vert_{h}^{2}}{\Vert q \Vert_{h}^{2}}\) \\
\midrule
 \(10^{-3}\) & 63 & \(6.8617\times 10^{-4}\) & \(1.0646\times 10^{-3}\) \\
 \(10^{-4}\) & 62 & \(8.3959\times 10^{-5}\) & \(1.3026\times 10^{-4}\) \\
 \(10^{-5}\) & 62 & \(2.0756\times 10^{-5}\) & \(3.2202\times 10^{-5}\) \\
 \(10^{-6}\) & 62 & \(1.4436\times 10^{-5}\) & \(2.2396\times 10^{-5}\) \\
\bottomrule
\end{tabular}
\end{table}

Figures~\ref{fig1}--\ref{fig2} compare the reconstructed and measured final states, displaying real and imaginary parts of \(\Psi(f)\) and \(u_T\). The close agreement is consistent with the value of the tracking functional (cf. \eqref{eq:J-discrete}).

\begin{figure}[htbp]
    \centering
    \begin{subfigure}[b]{0.32\textwidth}
        \centering
        \includegraphics[width=\textwidth]{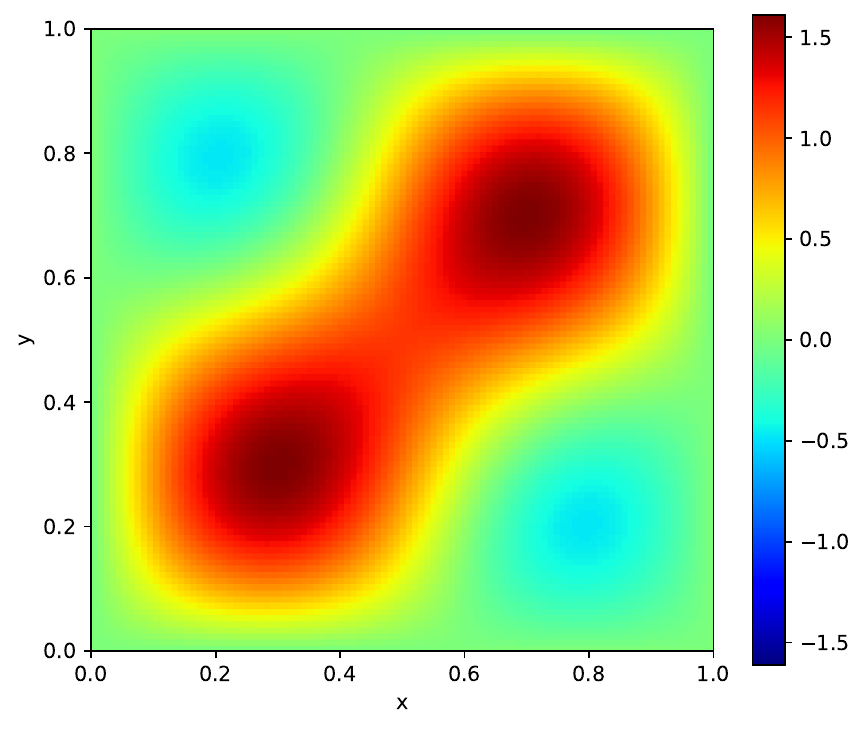}
        \caption{$\Re\big(\Psi(f)\big)$}
    \end{subfigure}
    \hfill
    \begin{subfigure}[b]{0.32\textwidth}
        \centering
        \includegraphics[width=\textwidth]{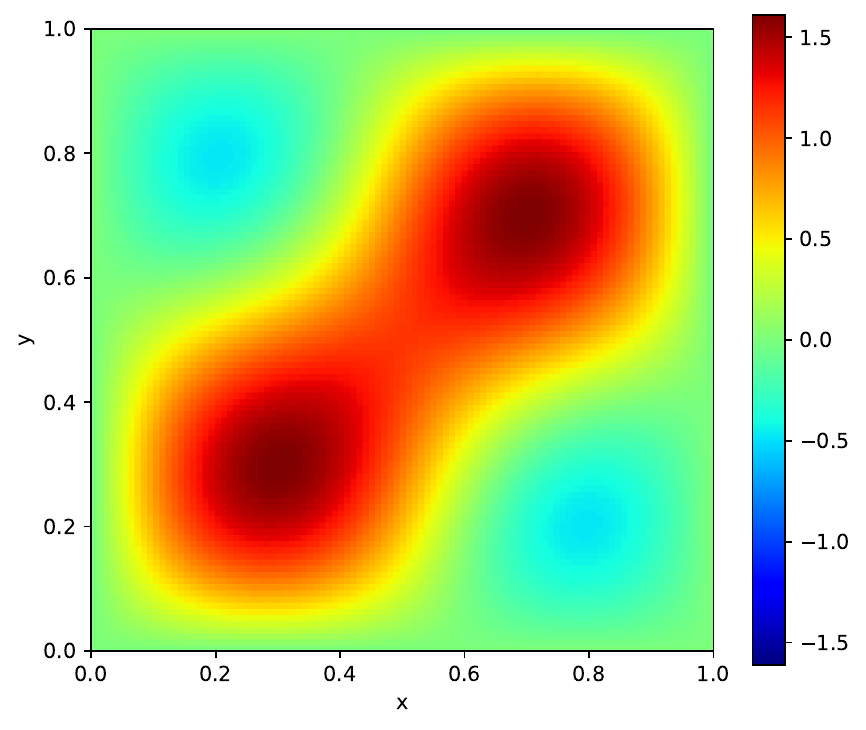}
        \caption{$\Re(u_{T})$}
    \end{subfigure}
    \hfill
    \begin{subfigure}[b]{0.32\textwidth}
        \centering
        \includegraphics[width=\textwidth]{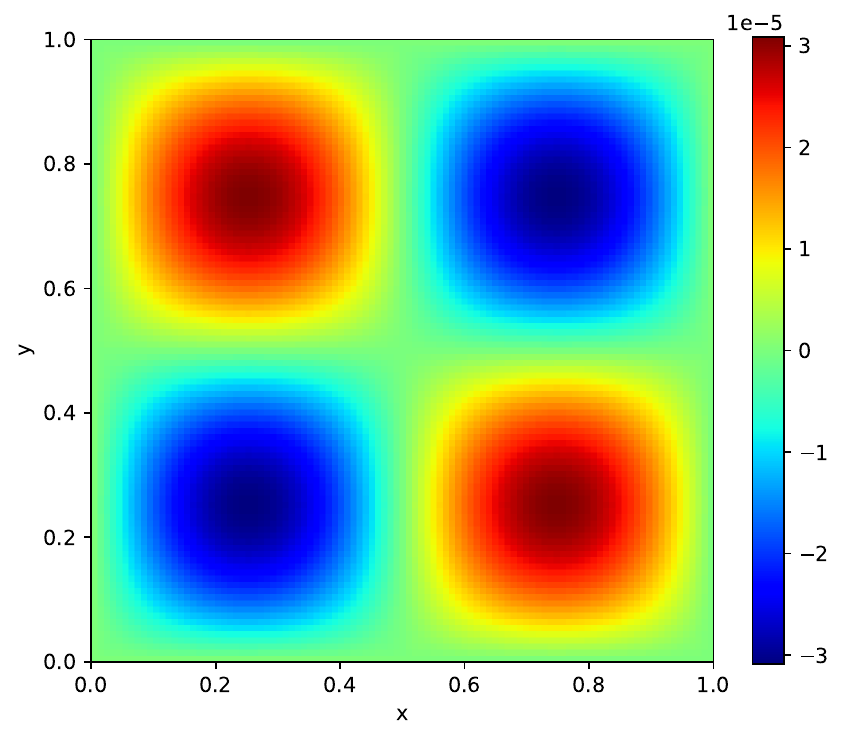}
        \caption{$\Re\big(\Psi(f)-u_{T}\big)$}
    \end{subfigure}
    \caption{Final-time comparison: real part. Parameters \((N_x,N_y,N_t)=(100,100,70)\), \(\\tau = 10^{-5}\), \(\epsilon=10^{-5}\).}
    \label{fig1}
\end{figure}

\begin{figure}[htbp]
    \centering
    \begin{subfigure}[b]{0.32\textwidth}
        \centering
        \includegraphics[width=\textwidth]{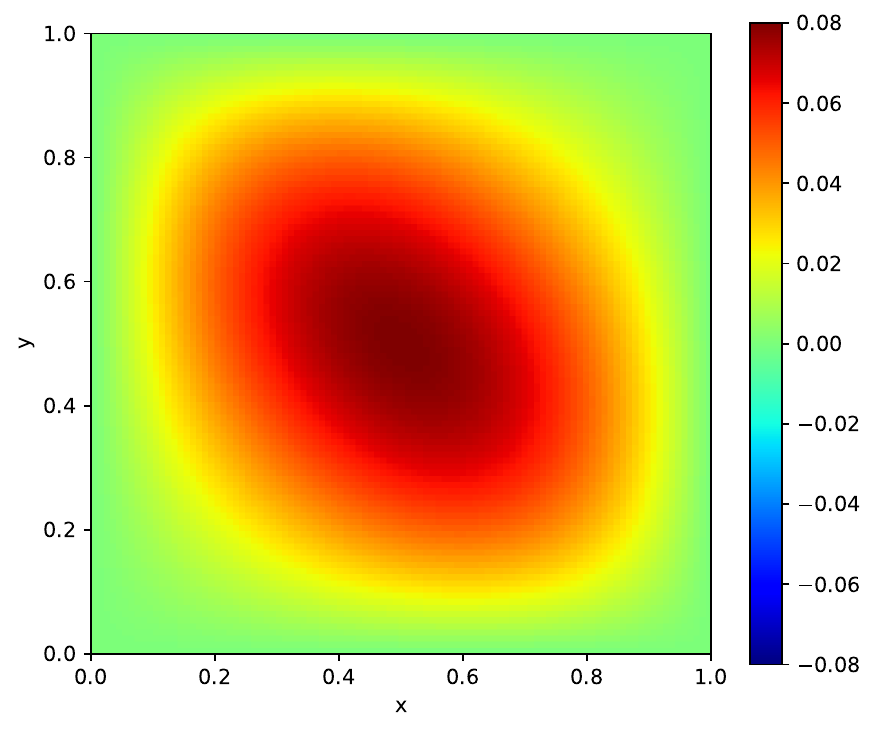}
        \caption{$\Im\big(\Psi(f)\big)$}
    \end{subfigure}
    \hfill
    \begin{subfigure}[b]{0.32\textwidth}
        \centering
        \includegraphics[width=\textwidth]{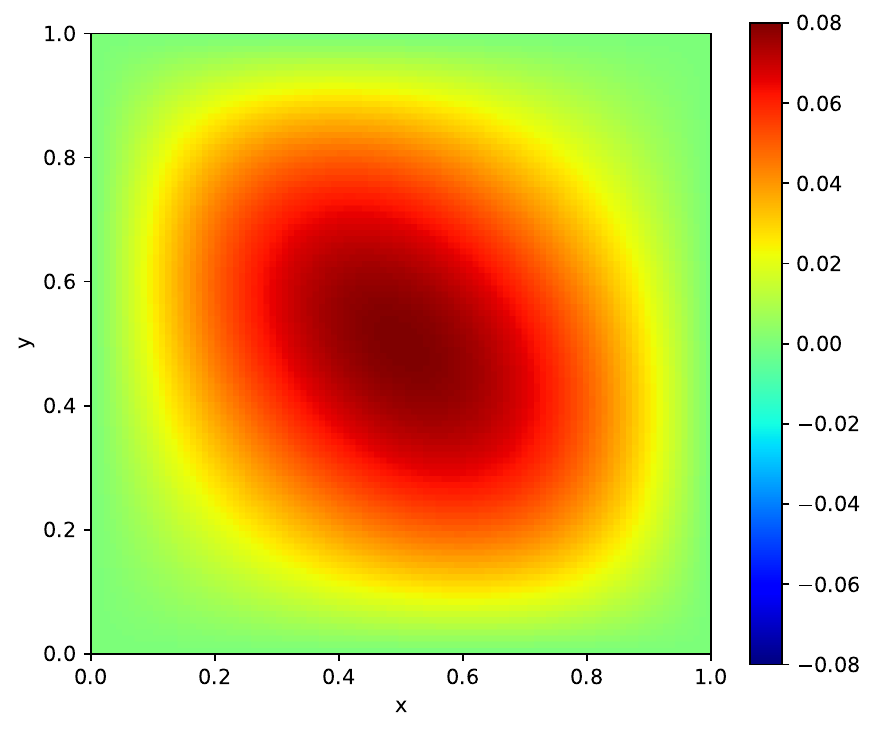}
        \caption{$\Im(u_{T})$}
    \end{subfigure}
    \hfill
    \begin{subfigure}[b]{0.32\textwidth}
        \centering
        \includegraphics[width=\textwidth]{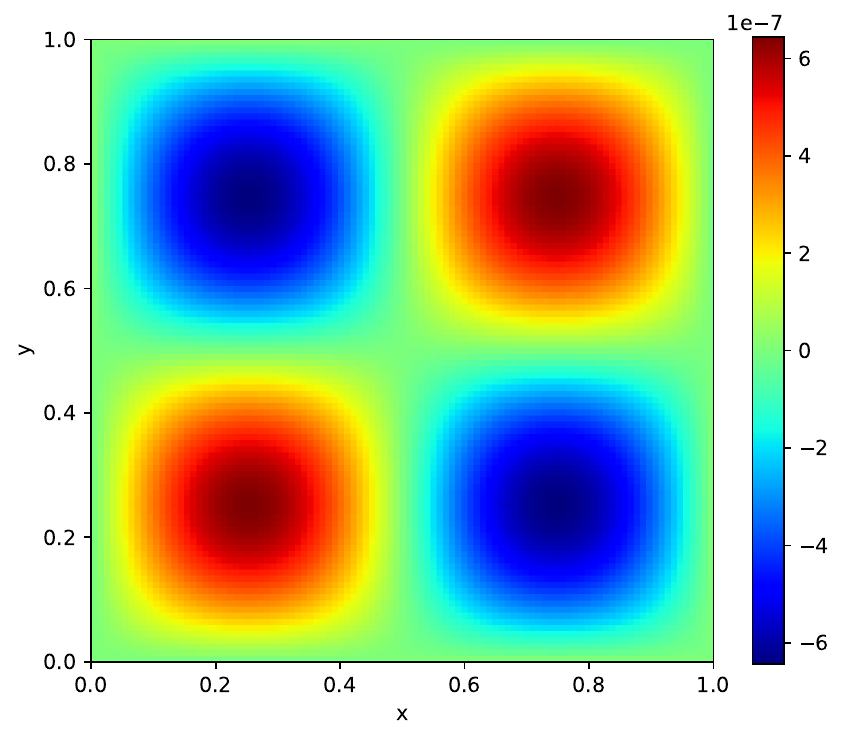}
        \caption{$\Im\big(\Psi(f)-u_{T}\big)$}
    \end{subfigure}
    \caption{Final-time comparison: imaginary part. Parameters \((N_x,N_y,N_t)=(100,100,70)\), \(\\tau = 10^{-5}\), \(\epsilon=10^{-5}\).}
    \label{fig2}
\end{figure}

Figures~\ref{fig3}--\ref{fig4} display the recovered source \(q_{\mathrm{rec}}\) and the true source \(q\). The reconstruction closely matches the ground truth; the residual plots confirm that the remaining discrepancies are small and spatially localized.

\begin{figure}[htbp]
    \centering
    \begin{subfigure}[b]{0.32\textwidth}
        \centering
        \includegraphics[width=\textwidth]{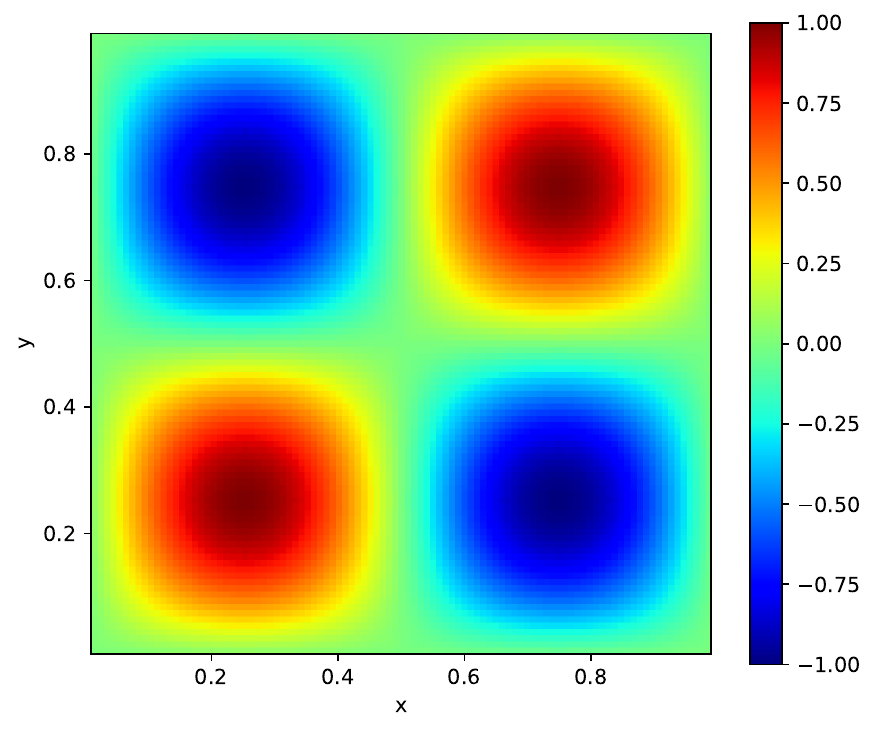}
        \caption{$\Re\big(q_{\mathrm{rec}}\big)$}
    \end{subfigure}
    \hfill
    \begin{subfigure}[b]{0.32\textwidth}
        \centering
        \includegraphics[width=\textwidth]{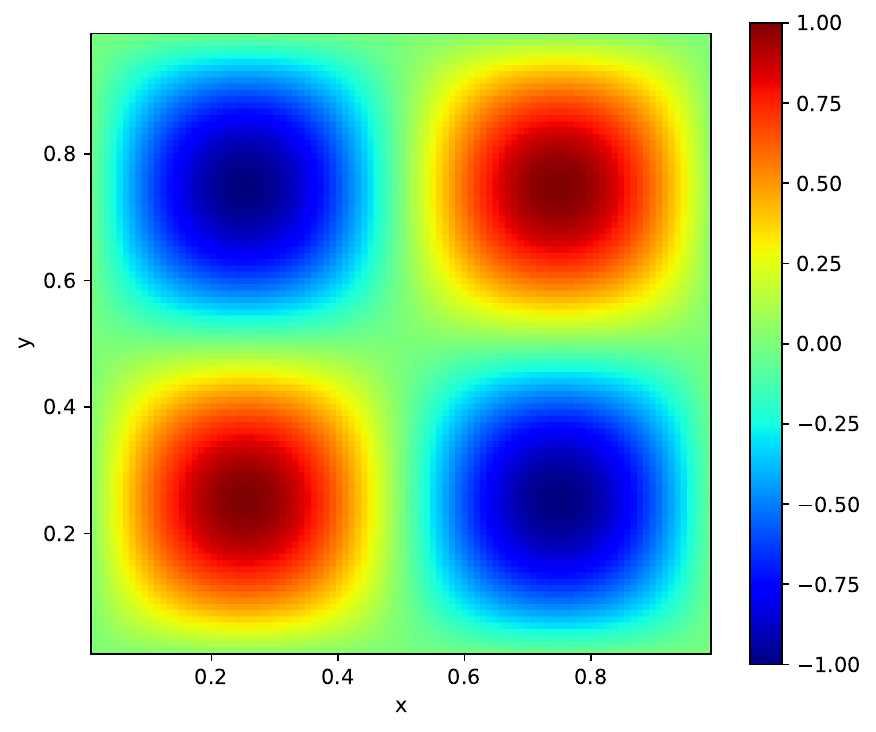}
        \caption{$\Re(q)$}
    \end{subfigure}
    \hfill
    \begin{subfigure}[b]{0.32\textwidth}
        \centering
        \includegraphics[width=\textwidth]{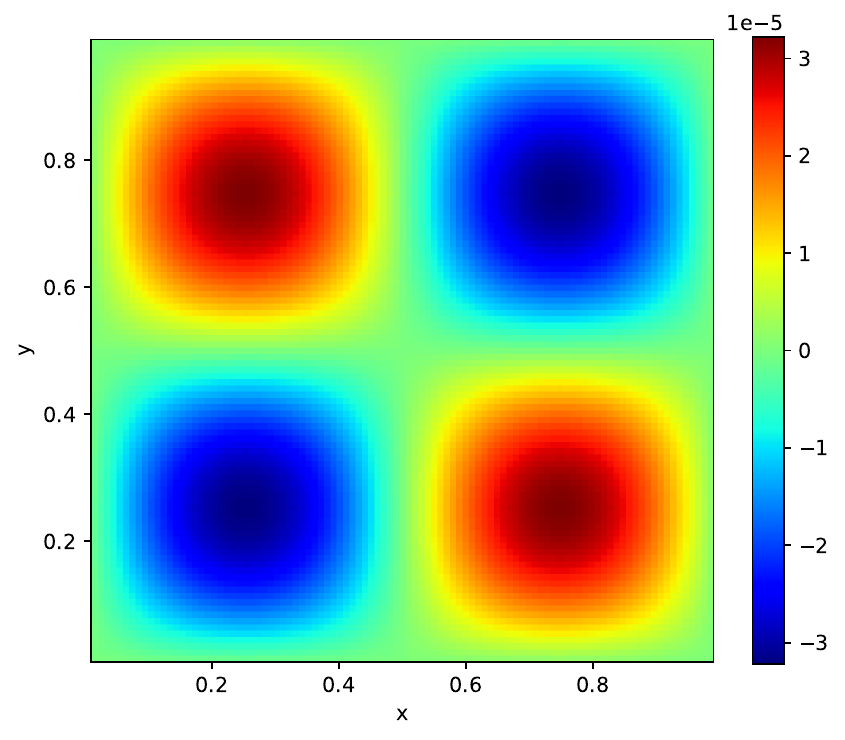}
        \caption{$\Re\big(q_{\mathrm{rec}}-q\big)$}
    \end{subfigure}
    \caption{Recovered vs.\ true source (real part). Parameters \((N_x,N_y,N_t)=(100,100,70)\), \(\\tau = 10^{-5}\), \(\epsilon=10^{-5}\).}
    \label{fig3}
\end{figure}

\begin{figure}[htbp]
    \centering
    \begin{subfigure}[b]{0.32\textwidth}
        \centering
        \includegraphics[width=\textwidth]{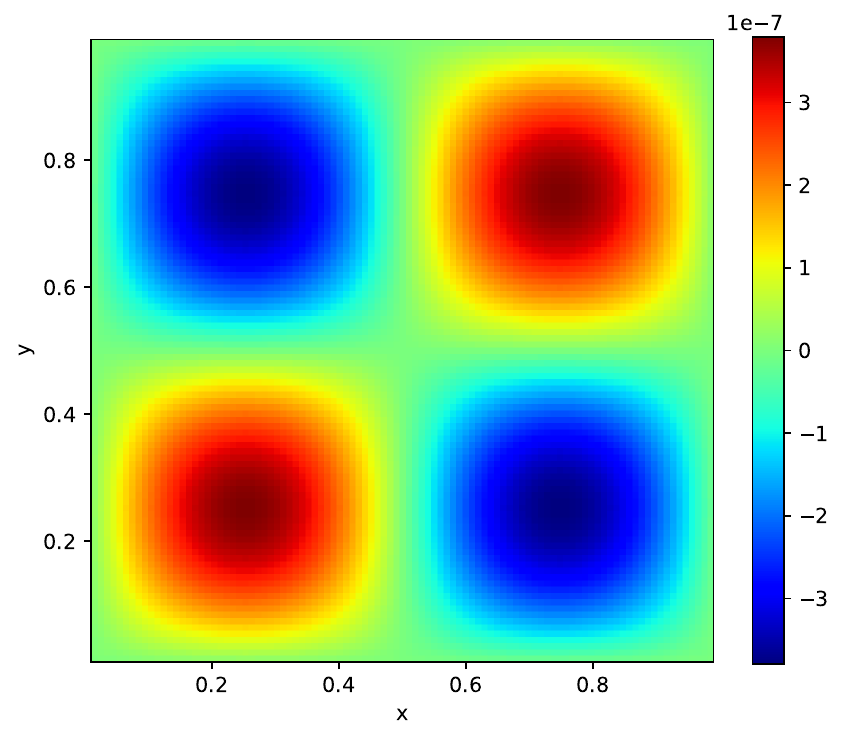}
        \caption{$\Im\big(q_{\mathrm{rec}}\big)$}
    \end{subfigure}
    \hfill
    \begin{subfigure}[b]{0.32\textwidth}
        \centering
        \includegraphics[width=\textwidth]{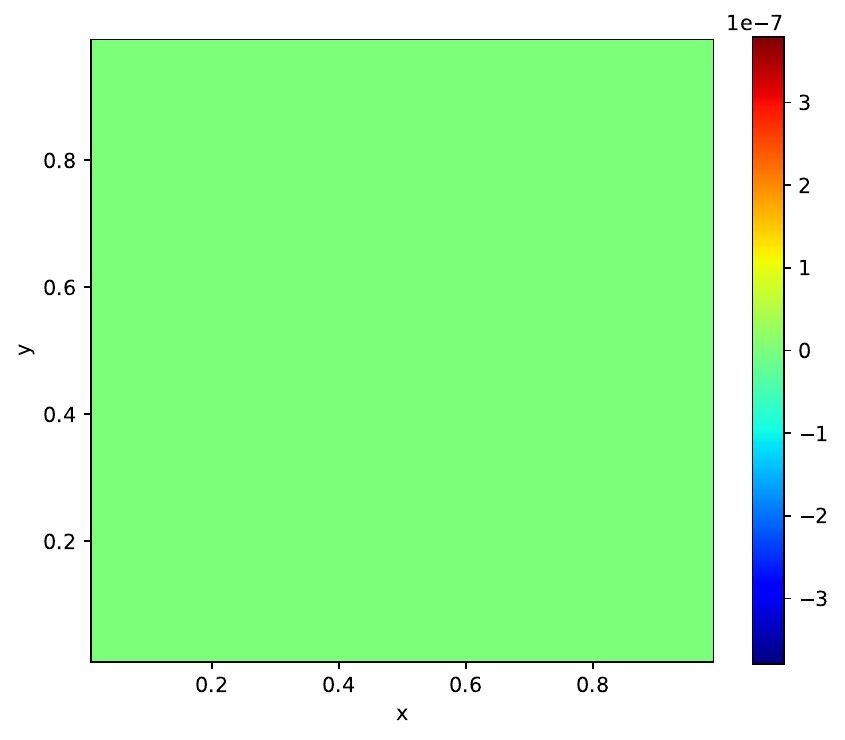}
        \caption{$\Im(q)$}
    \end{subfigure}
    \hfill
    \begin{subfigure}[b]{0.32\textwidth}
        \centering
        \includegraphics[width=\textwidth]{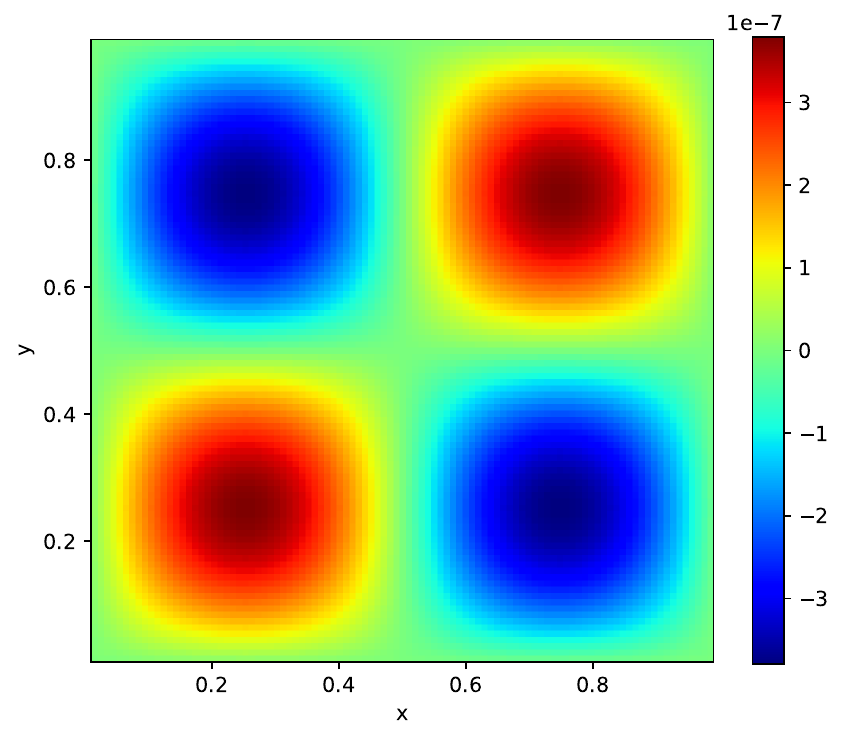}
        \caption{$\Im\big(q_{\mathrm{rec}}-q\big)$}
    \end{subfigure}
    \caption{Recovered vs.\ true source (imaginary part). Parameters \((N_x,N_y,N_t)=(100,100,70)\), \(\\tau = 10^{-5}\), \(\epsilon=10^{-5}\).}
    \label{fig4}
\end{figure}

\subsubsection{Example 2 (Localized imaginary Gaussian source)}

We next examine a localized imaginary source to test spatial selectivity and robustness of the inversion.

In this example we consider a purely imaginary, Gaussian–modulated source,
\begin{equation}\label{eq:ex2q}
    q(x,y)= i\,\exp\!\left(-\frac{(x-\tfrac12)^2+(y-\tfrac12)^2}{2\sigma^2}\right)
    \sin(\pi x)\,\sin(\pi y),\qquad \sigma=0.12,
\end{equation}
which is spatially localized around the domain center.

Figures~\ref{fig5}--\ref{fig6} compare the reconstructed and measured final states, showing the real and imaginary parts of \(\Psi(f)\) and \(u_T\). The plots indicate excellent agreement; the relative (discrete) \(L^2\) data misfit is
\[
\frac{\Vert \Psi(f)-u_T\Vert_{h}^{2}}{\Vert u_T\Vert_{h}^{2}} \;=\; 4.2617\times 10^{-5}.
\]

\begin{figure}[htbp]
    \centering
    \begin{subfigure}[b]{0.32\textwidth}
        \centering
        \includegraphics[width=\textwidth]{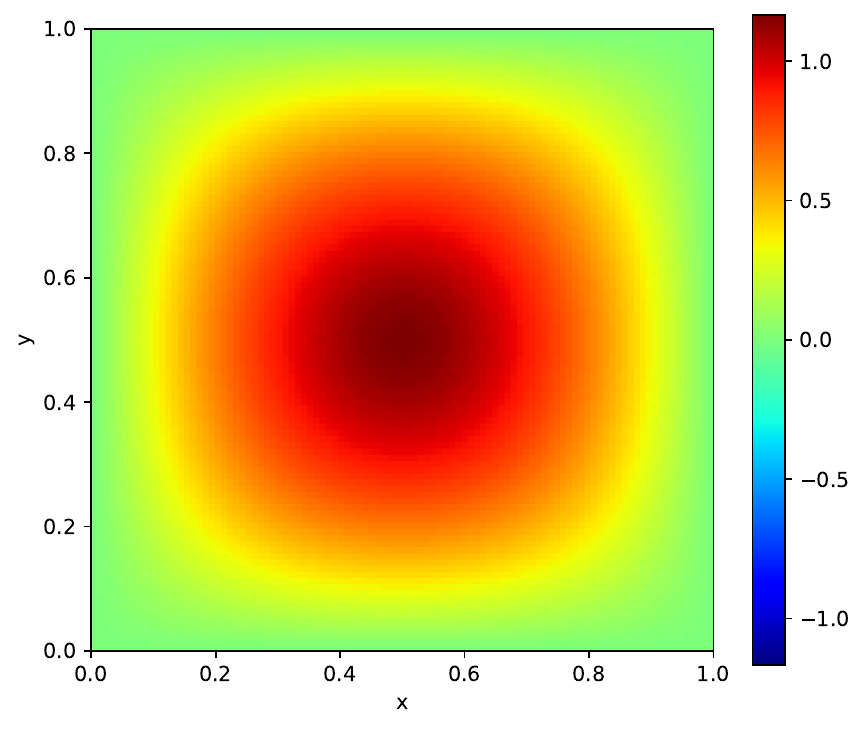}
        \caption{$\Re(y(\cdot,T))$}
    \end{subfigure}
    \hfill
    \begin{subfigure}[b]{0.32\textwidth}
        \centering
        \includegraphics[width=\textwidth]{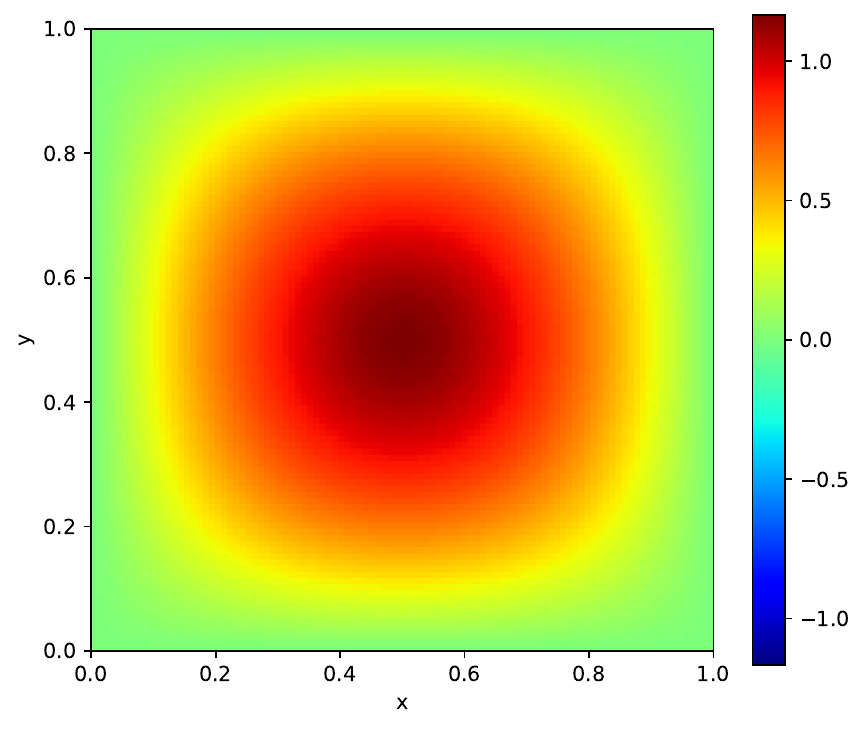}
        \caption{$\Re(u_{T})$}
    \end{subfigure}
    \hfill
    \begin{subfigure}[b]{0.32\textwidth}
        \centering
        \includegraphics[width=\textwidth]{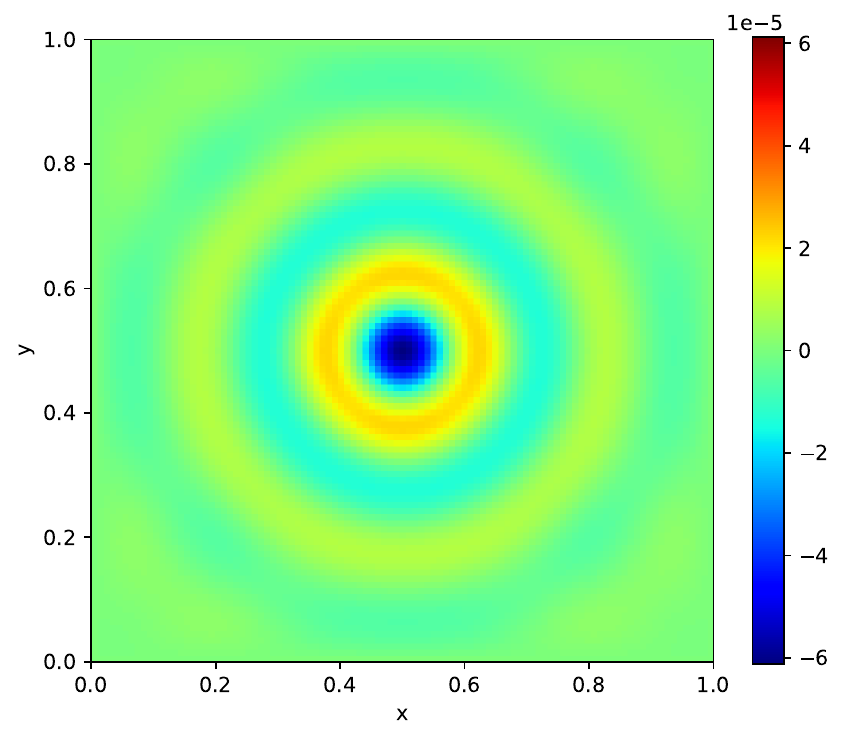}
        \caption{$\Re(y(\cdot,T)-u_{T})$}
    \end{subfigure}
    \caption{Final-time comparison: real part. Parameters \((N_x,N_y,N_t)=(100,100,70)\), \(\tau = 10^{-5}\), \(\varepsilon=10^{-5}\).}
    \label{fig5}
\end{figure}

\begin{figure}[htbp]
    \centering
    \begin{subfigure}[b]{0.32\textwidth}
        \centering
        \includegraphics[width=\textwidth]{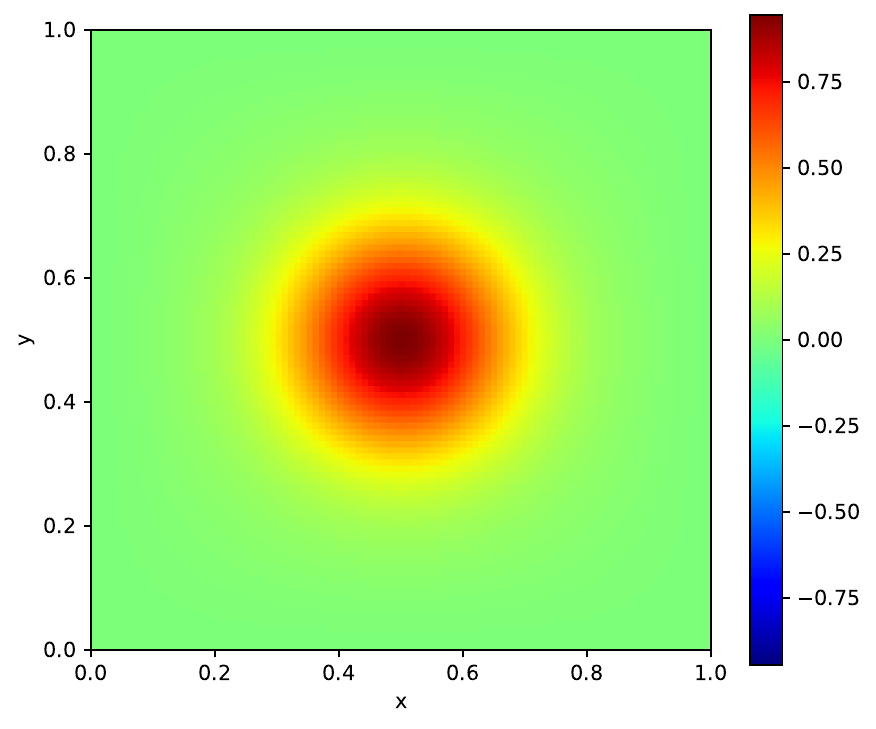}
        \caption{$\Im(y(\cdot,T))$}
    \end{subfigure}
    \hfill
    \begin{subfigure}[b]{0.32\textwidth}
        \centering
        \includegraphics[width=\textwidth]{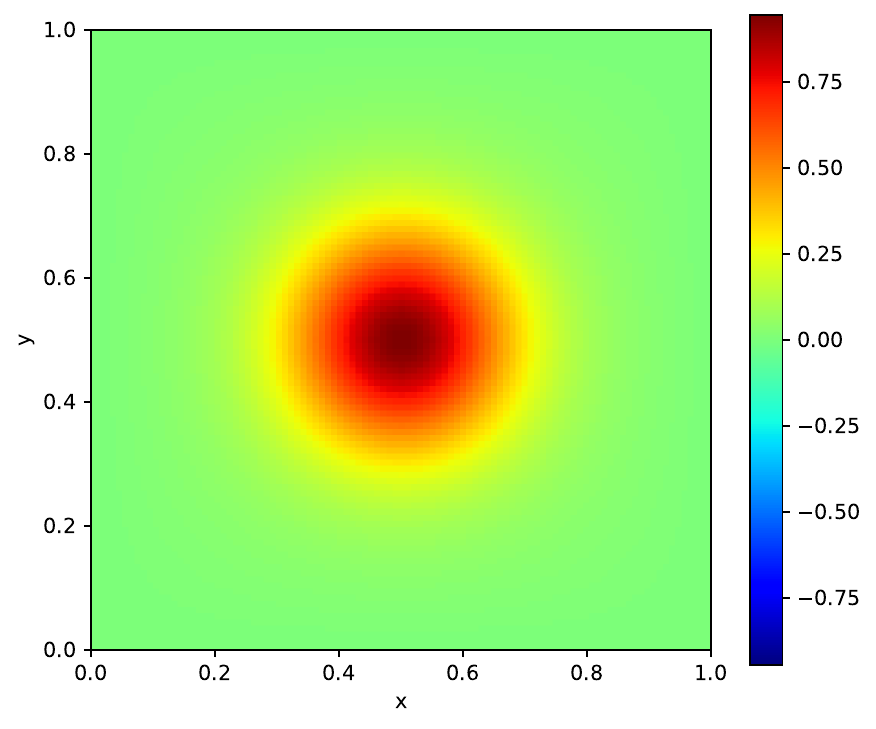}
        \caption{$\Im(u_{T})$}
    \end{subfigure}
    \hfill
    \begin{subfigure}[b]{0.32\textwidth}
        \centering
        \includegraphics[width=\textwidth]{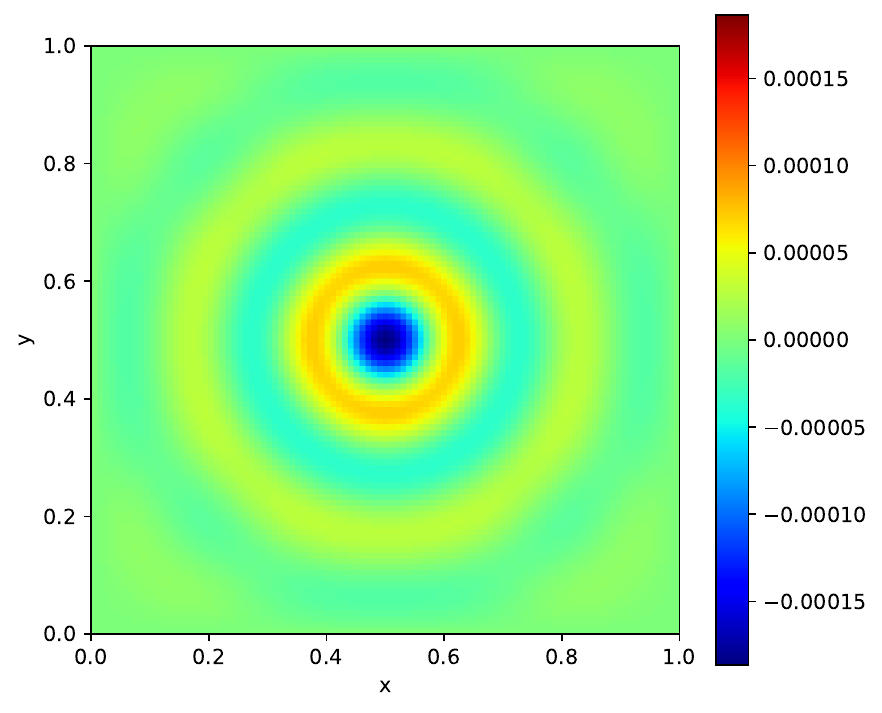}
        \caption{$\Im(y(\cdot,T)-u_{T})$}
    \end{subfigure}
    \caption{Final-time comparison: imaginary part. Parameters \((N_x,N_y,N_t)=(100,100,70)\), \(\tau = 10^{-5}\), \(\varepsilon=10^{-5}\).}
    \label{fig6}
\end{figure}

Figures~\ref{fig7}--\ref{fig8} display the recovered source \(q_{\mathrm{rec}}\) alongside the ground truth \(q\) from \eqref{eq:ex2q}. The reconstruction is highly accurate; the residuals are small and spatially localized. The relative (discrete) \(L^2\) reconstruction error is
\[
\frac{\Vert q_{\mathrm{rec}}-q\Vert_{h}^{2}}{\Vert q \Vert_{h}^{2}}
= 4.3644\times 10^{-4}.
\]

\begin{figure}[htbp]
    \centering
    \begin{subfigure}[b]{0.32\textwidth}
        \centering
        \includegraphics[width=\textwidth]{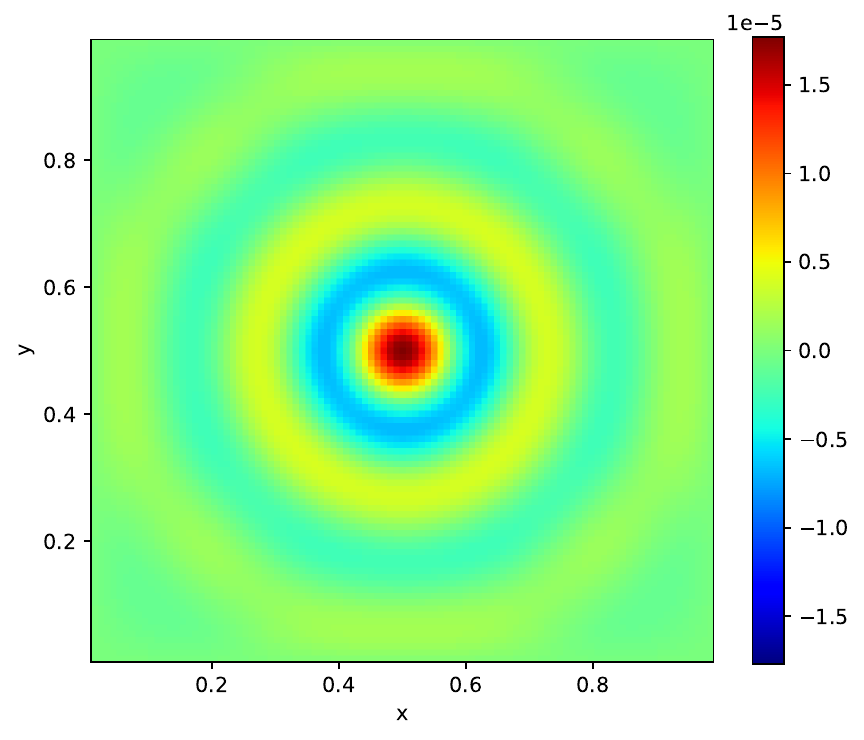}
        \caption{$\Re(q_{rec})$}
    \end{subfigure}
    \hfill
    \begin{subfigure}[b]{0.32\textwidth}
        \centering
        \includegraphics[width=\textwidth]{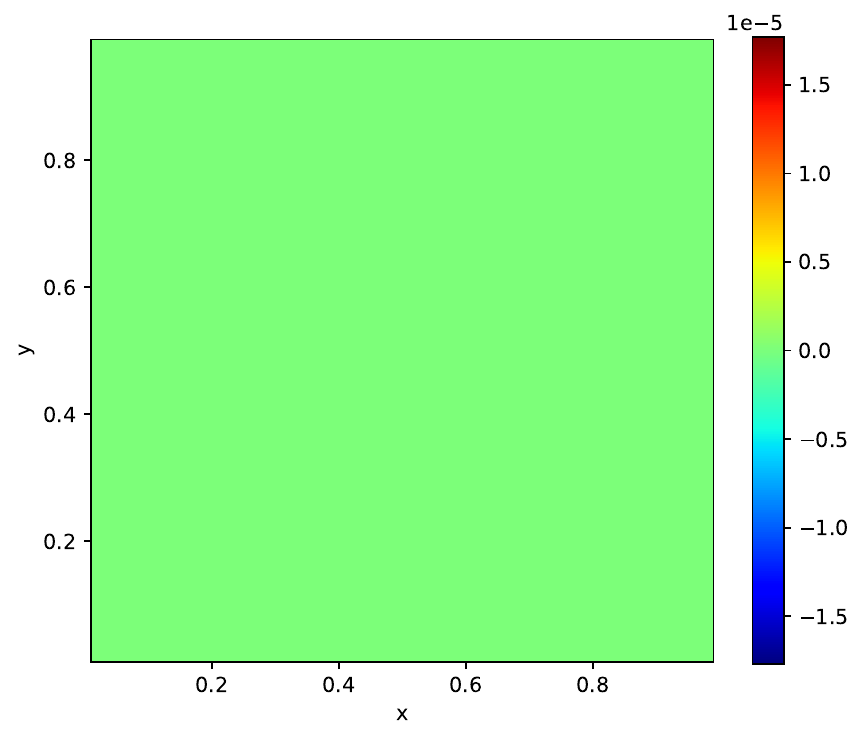}
        \caption{$\Re(q)$}
    \end{subfigure}
    \hfill
    \begin{subfigure}[b]{0.32\textwidth}
        \centering
        \includegraphics[width=\textwidth]{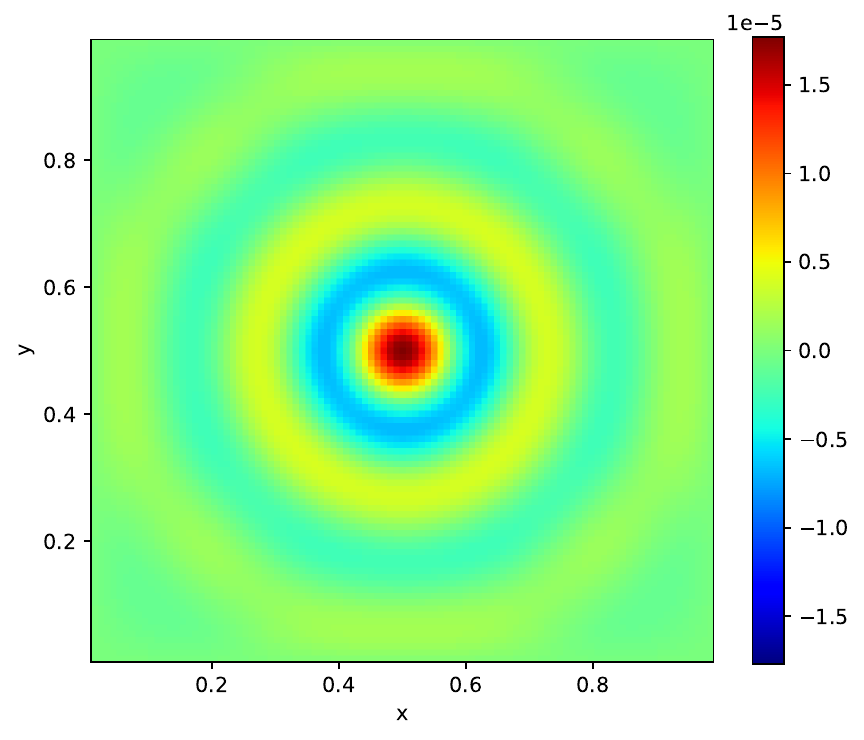}
        \caption{$\Re(q^{rec}-q)$}
    \end{subfigure}
    \caption{Recovered vs.\ true source (real part). Parameters \((N_x,N_y,N_t)=(100,100,70)\), \(\tau = 10^{-5}\), \(\varepsilon=10^{-5}\).}
    \label{fig7}
\end{figure}

\begin{figure}[htbp]
    \centering
    \begin{subfigure}[b]{0.32\textwidth}
        \centering
        \includegraphics[width=\textwidth]{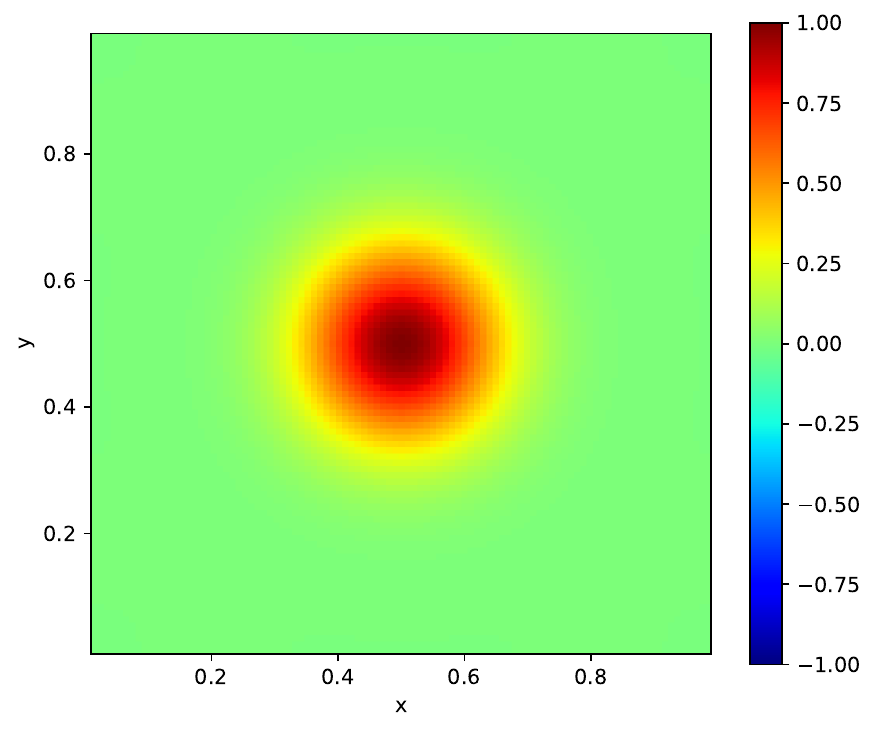}
        \caption{$\Im(q_{rec})$}
    \end{subfigure}
    \hfill
    \begin{subfigure}[b]{0.32\textwidth}
        \centering
        \includegraphics[width=\textwidth]{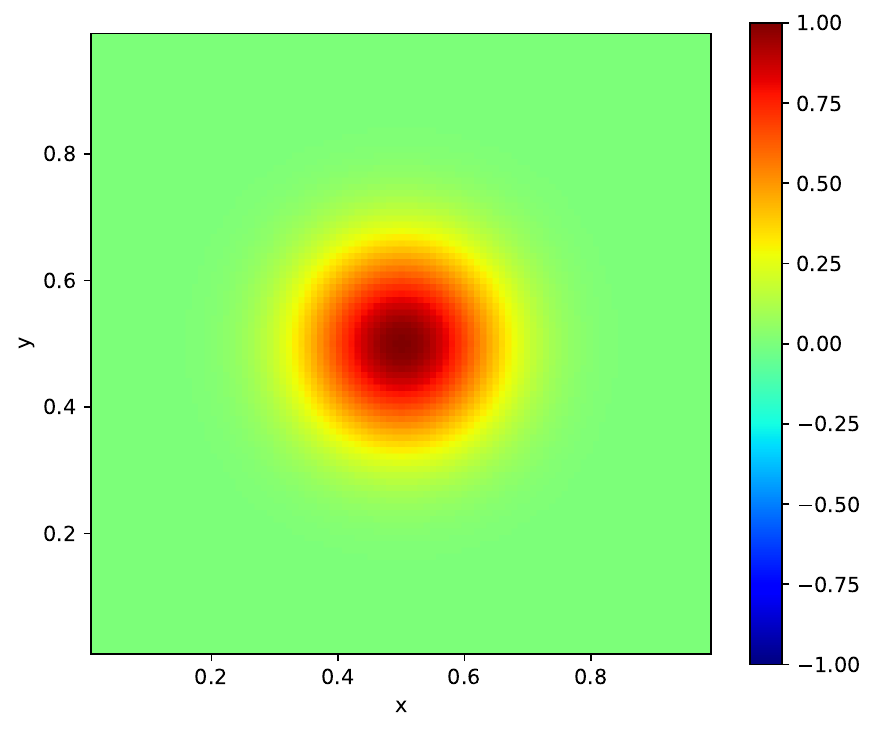}
        \caption{$\Im(q)$}
    \end{subfigure}
    \hfill
    \begin{subfigure}[b]{0.32\textwidth}
        \centering
        \includegraphics[width=\textwidth]{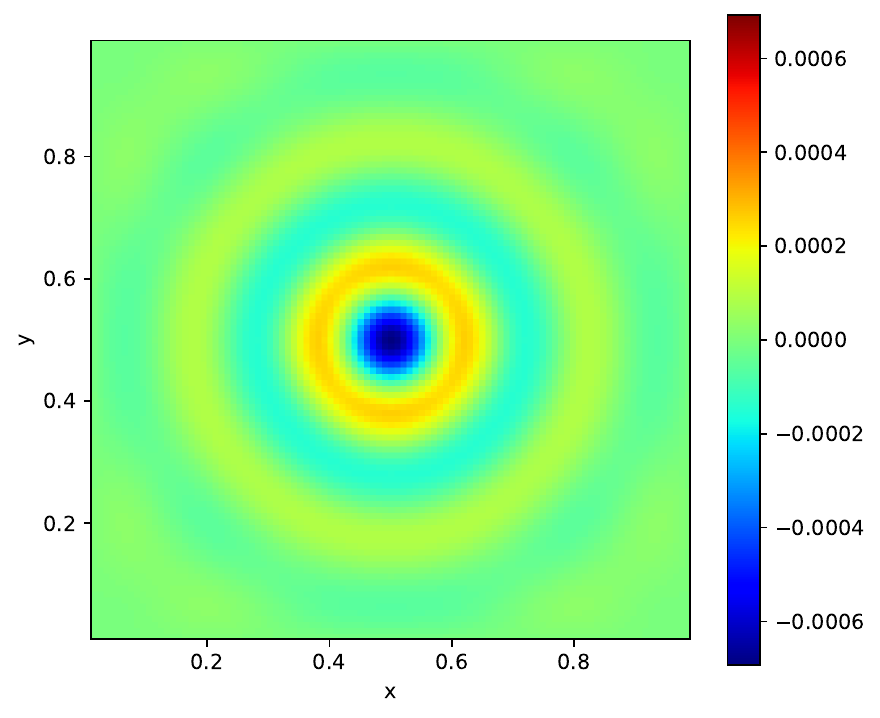}
        \caption{$\Im(q^{rec}-q)$}
    \end{subfigure}
    \caption{Recovered vs.\ true source (imaginary part). Parameters \((N_x,N_y,N_t)=(100,100,70)\), \(\tau = 10^{-5}\), \(\varepsilon=10^{-5}\).}
    \label{fig8}
\end{figure}

\subsubsection{Example 3 (Complex source with mixed frequencies)}

This example explores a complex source that combines distinct spatial frequencies in its real and imaginary parts. We consider
\begin{equation}\label{eq:ex3q}
    q(x,y)= \sin(2\pi x)\,\sin(2\pi y)\;+\; i\,0.7\,\sin(3\pi x)\,\sin(2\pi y),
    \qquad (x,y)\in(0,1)^2.
\end{equation}

Figures~\ref{fig9}--\ref{fig10} compare the reconstructed and measured final states, displaying the real and imaginary parts of \(\Psi(f)\) and \(u_T\). The agreement is excellent; the relative (discrete) \(L^2\) data misfit equals
\[
\frac{\Vert \Psi(f)-u_T\Vert_{h}^{2}}{\Vert u_T\Vert_{h}^{2}} \;=\; 1.9803\times 10^{-5}.
\]

\begin{figure}[htbp]
    \centering
    \begin{subfigure}[b]{0.32\textwidth}
        \centering
        \includegraphics[width=\textwidth]{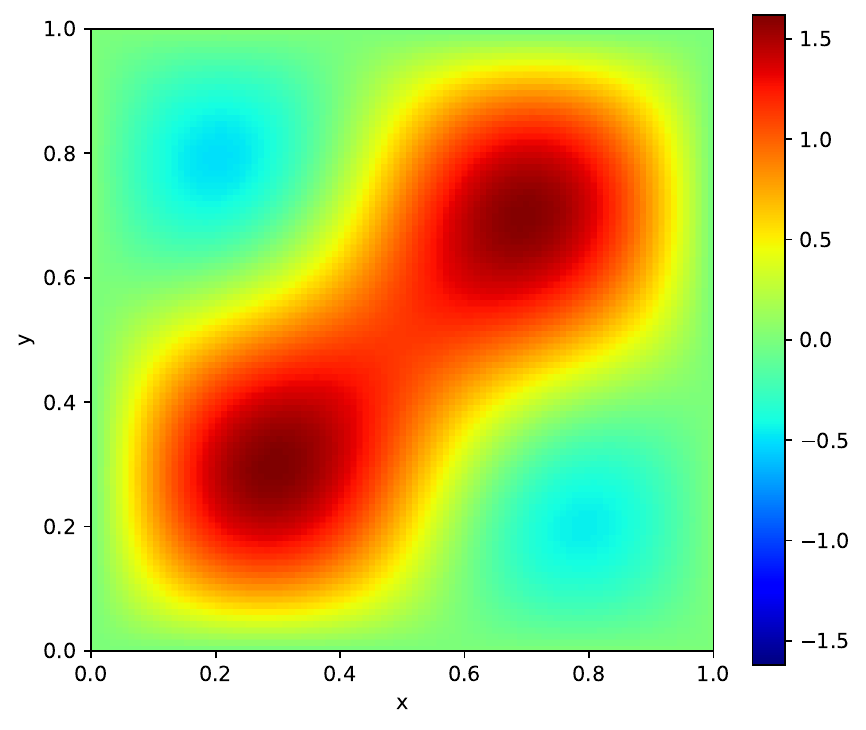}
        \caption{$\Re\big(\Psi(f)\big)$}
    \end{subfigure}\hfill
    \begin{subfigure}[b]{0.32\textwidth}
        \centering
        \includegraphics[width=\textwidth]{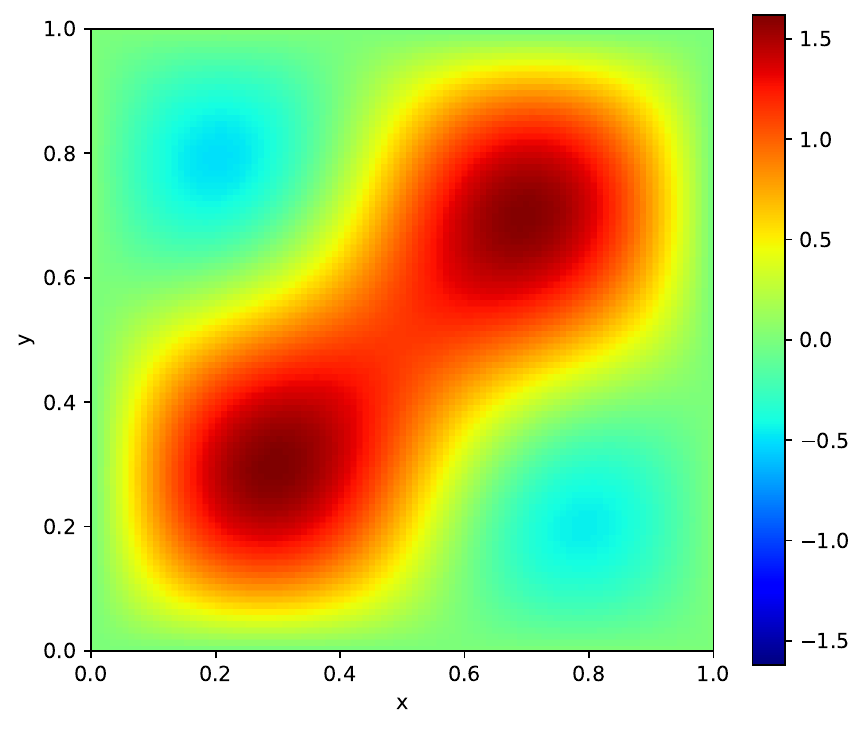}
        \caption{$\Re(u_{T})$}
    \end{subfigure}\hfill
    \begin{subfigure}[b]{0.32\textwidth}
        \centering
        \includegraphics[width=\textwidth]{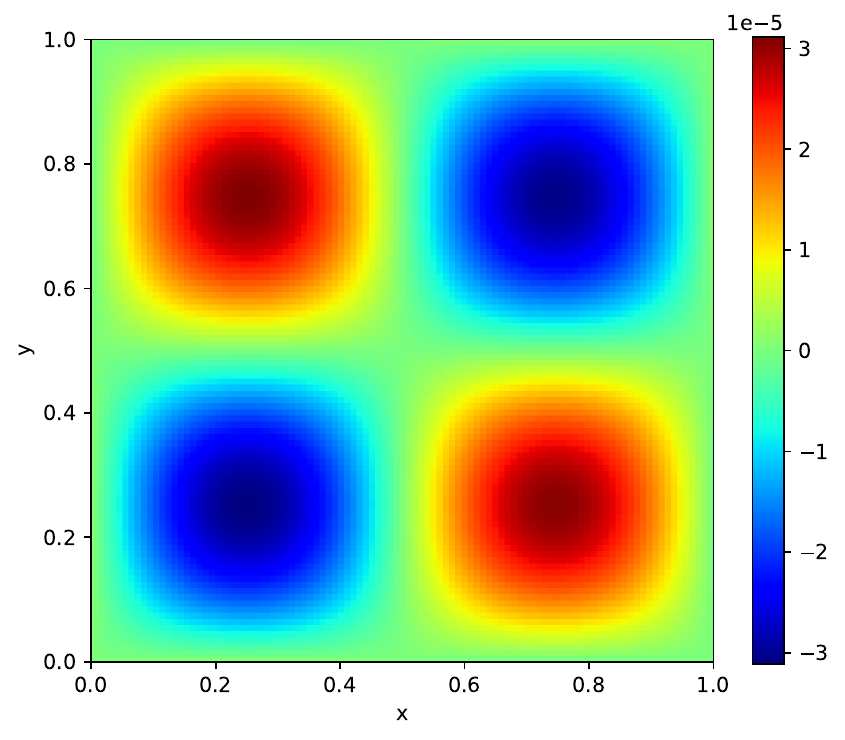}
        \caption{$\Re\big(\Psi(f)-u_{T}\big)$}
    \end{subfigure}
    \caption{Final-time comparison: real part. Parameters \((N_x,N_y,N_t)=(100,100,70)\), \(\tau = 10^{-5}\), \(\varepsilon=10^{-5}\).}
    \label{fig9}
\end{figure}

\begin{figure}[htbp]
    \centering
    \begin{subfigure}[b]{0.32\textwidth}
        \centering
        \includegraphics[width=\textwidth]{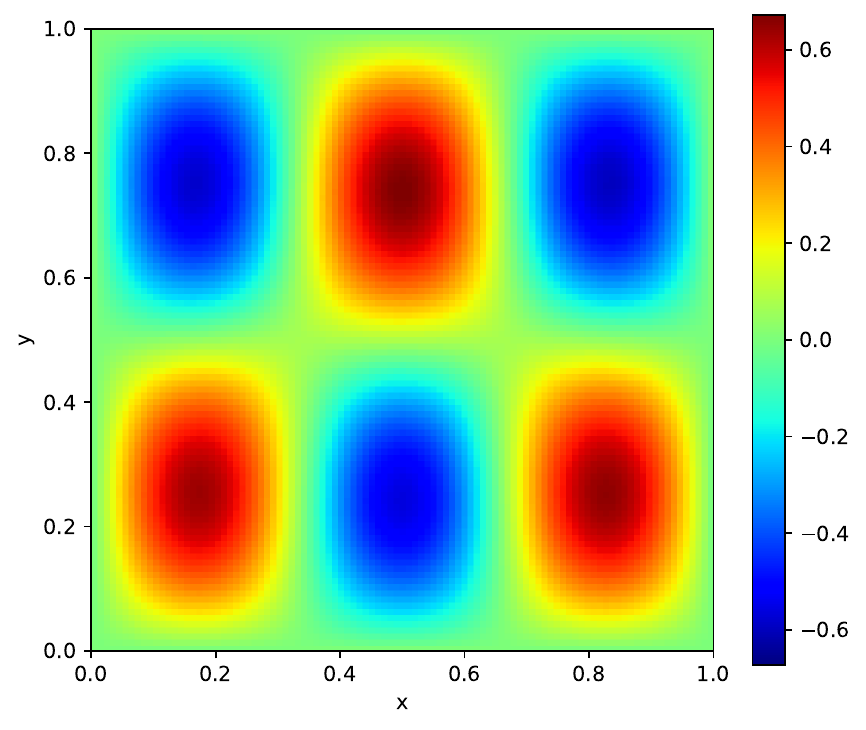}
        \caption{$\Im\big(\Psi(f)\big)$}
    \end{subfigure}\hfill
    \begin{subfigure}[b]{0.32\textwidth}
        \centering
        \includegraphics[width=\textwidth]{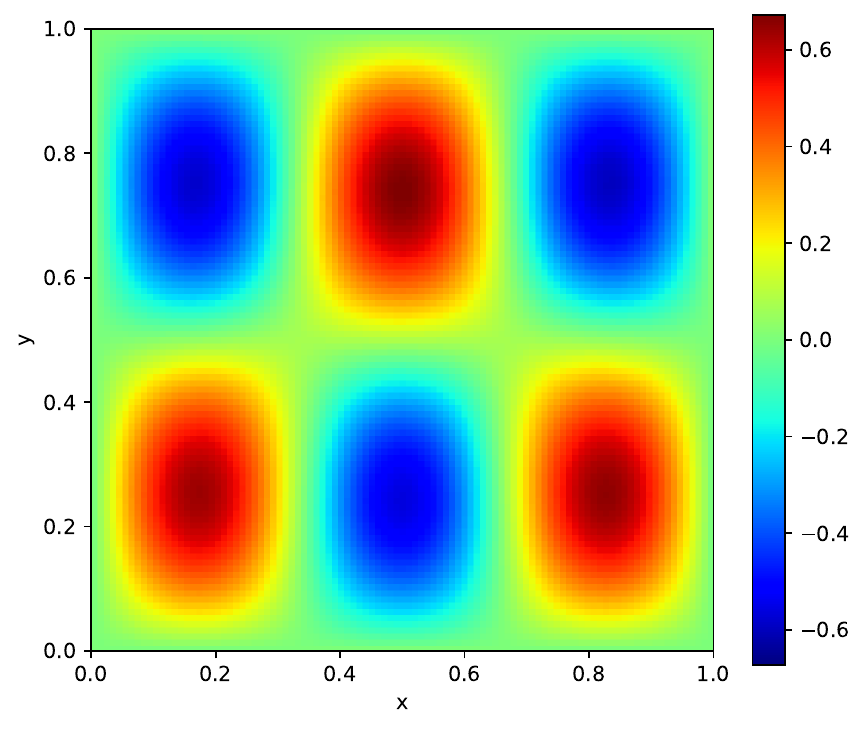}
        \caption{$\Im(u_{T})$}
    \end{subfigure}\hfill
    \begin{subfigure}[b]{0.32\textwidth}
        \centering
        \includegraphics[width=\textwidth]{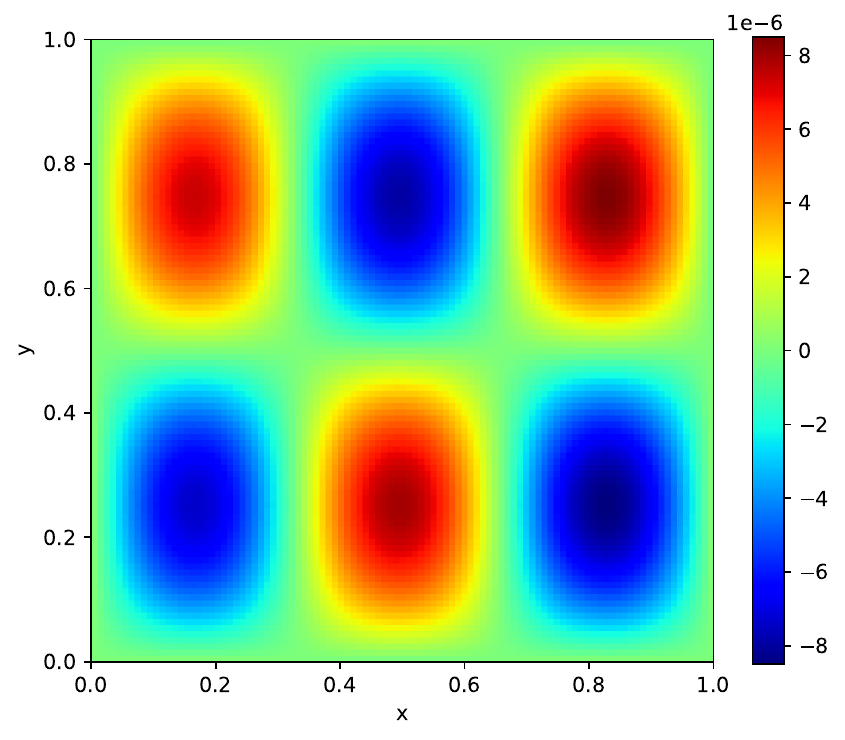}
        \caption{$\Im\big(\Psi(f)-u_{T}\big)$}
    \end{subfigure}
    \caption{Final-time comparison: imaginary part. Parameters \((N_x,N_y,N_t)=(100,100,70)\), \(\tau = 10^{-5}\), \(\varepsilon=10^{-5}\).}
    \label{fig10}
\end{figure}

Figures~\ref{fig11}--\ref{fig12} display the recovered source \(q_{\mathrm{rec}}\) alongside the ground truth \(q\) from \eqref{eq:ex3q}. The reconstruction is highly accurate; residuals are small and spatially localized. The relative (discrete) \(L^2\) reconstruction error is
\[
\frac{\Vert q_{\mathrm{rec}}-q\Vert_{h}^{2}}{\Vert q \Vert_{h}^{2}}
= 2.7399\times 10^{-5}.
\]

\begin{figure}[htbp]
    \centering
    \begin{subfigure}[b]{0.32\textwidth}
        \centering
        \includegraphics[width=\textwidth]{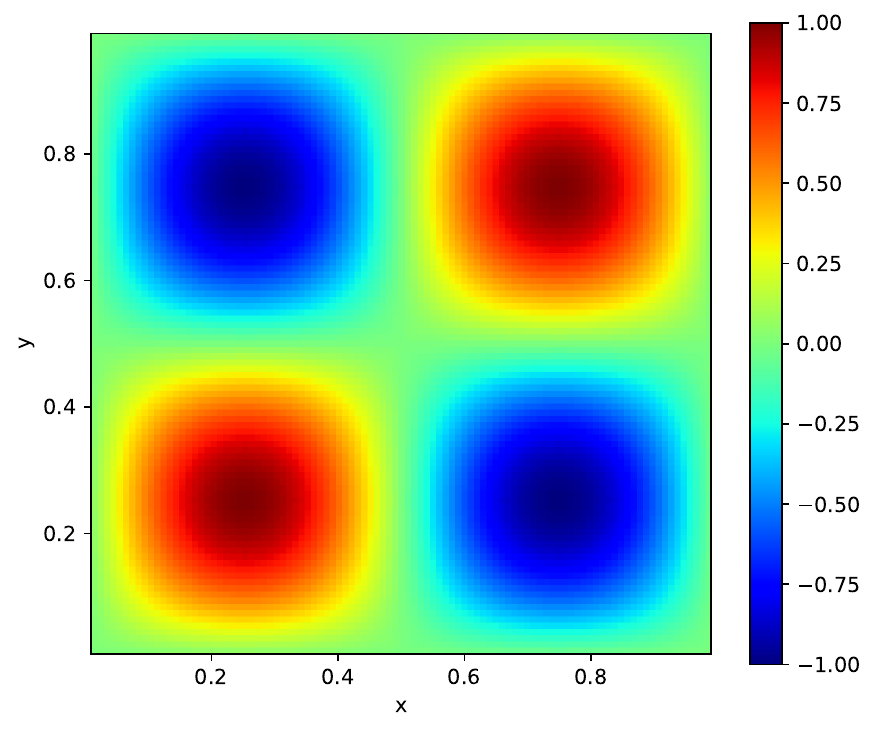}
        \caption{$\Re\big(q_{\mathrm{rec}}\big)$}
    \end{subfigure}\hfill
    \begin{subfigure}[b]{0.32\textwidth}
        \centering
        \includegraphics[width=\textwidth]{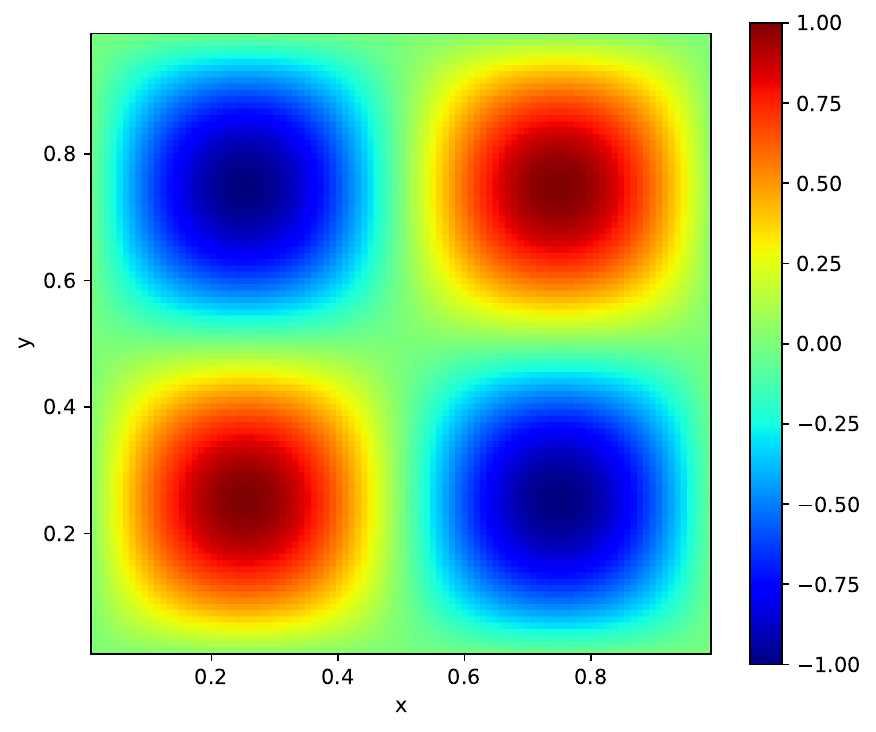}
        \caption{$\Re(q)$}
    \end{subfigure}\hfill
    \begin{subfigure}[b]{0.32\textwidth}
        \centering
        \includegraphics[width=\textwidth]{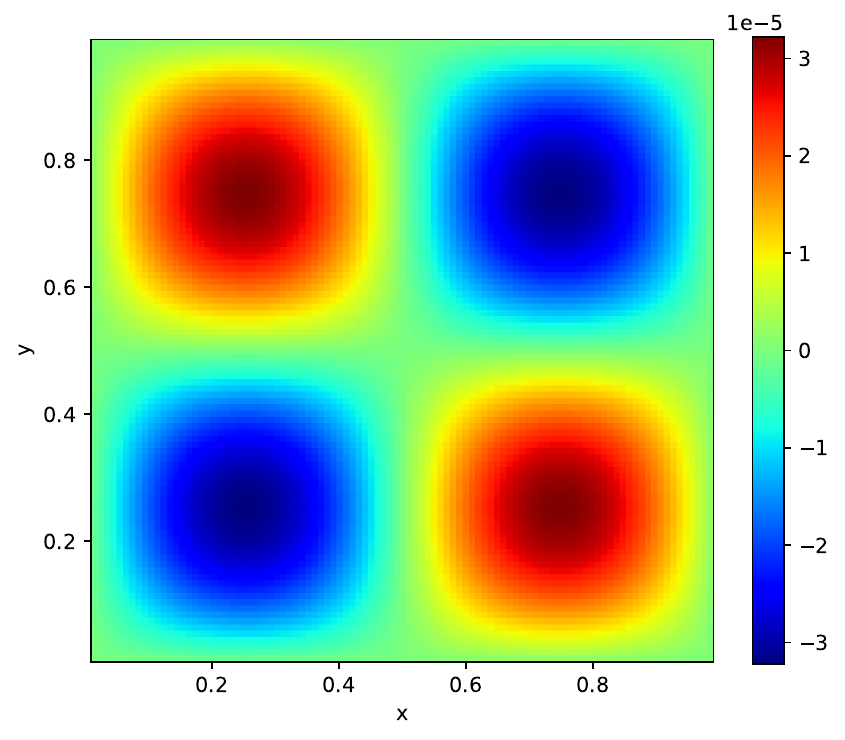}
        \caption{$\Re\big(q_{\mathrm{rec}}-q\big)$}
    \end{subfigure}
    \caption{Recovered vs.\ true source (real part). Parameters \((N_x,N_y,N_t)=(100,100,70)\), \(\tau = 10^{-5}\), \(\varepsilon=10^{-5}\).}
    \label{fig11}
\end{figure}

\begin{figure}[htbp]
    \centering
    \begin{subfigure}[b]{0.32\textwidth}
        \centering
        \includegraphics[width=\textwidth]{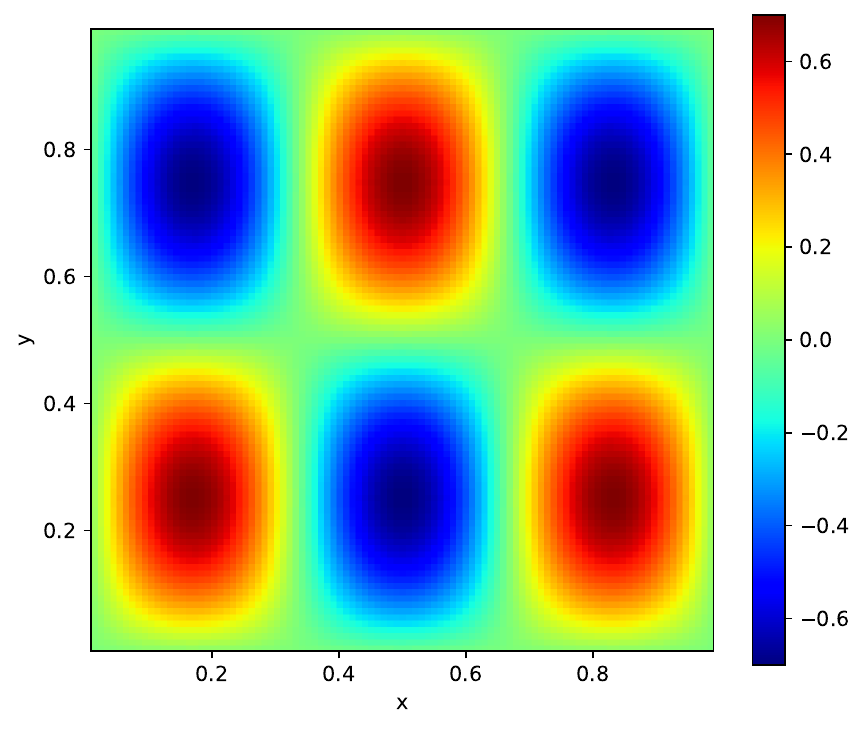}
        \caption{$\Im\big(q_{\mathrm{rec}}\big)$}
    \end{subfigure}\hfill
    \begin{subfigure}[b]{0.32\textwidth}
        \centering
        \includegraphics[width=\textwidth]{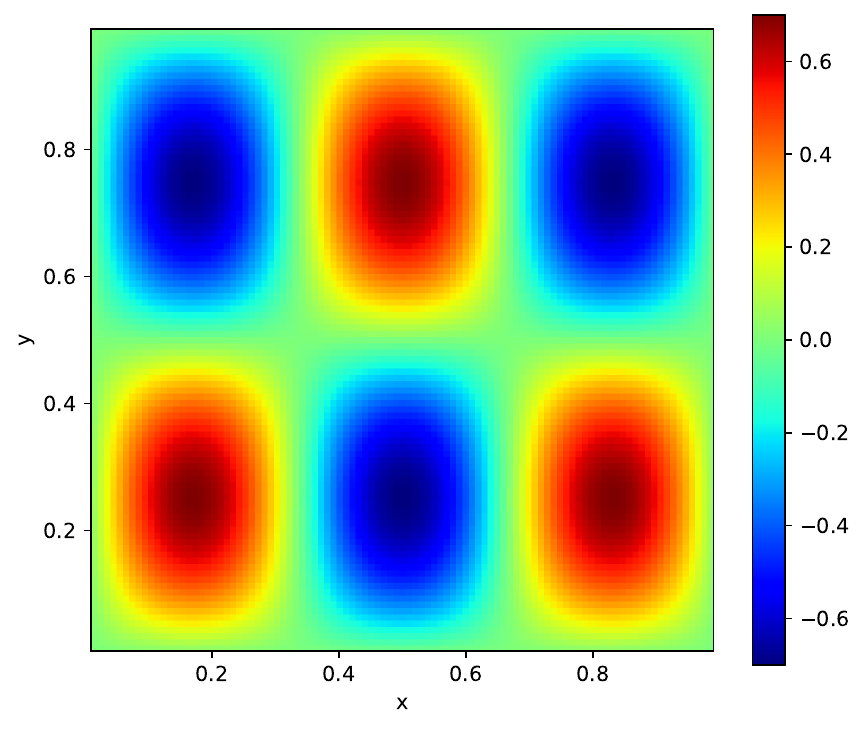}
        \caption{$\Im(q)$}
    \end{subfigure}\hfill
    \begin{subfigure}[b]{0.32\textwidth}
        \centering
        \includegraphics[width=\textwidth]{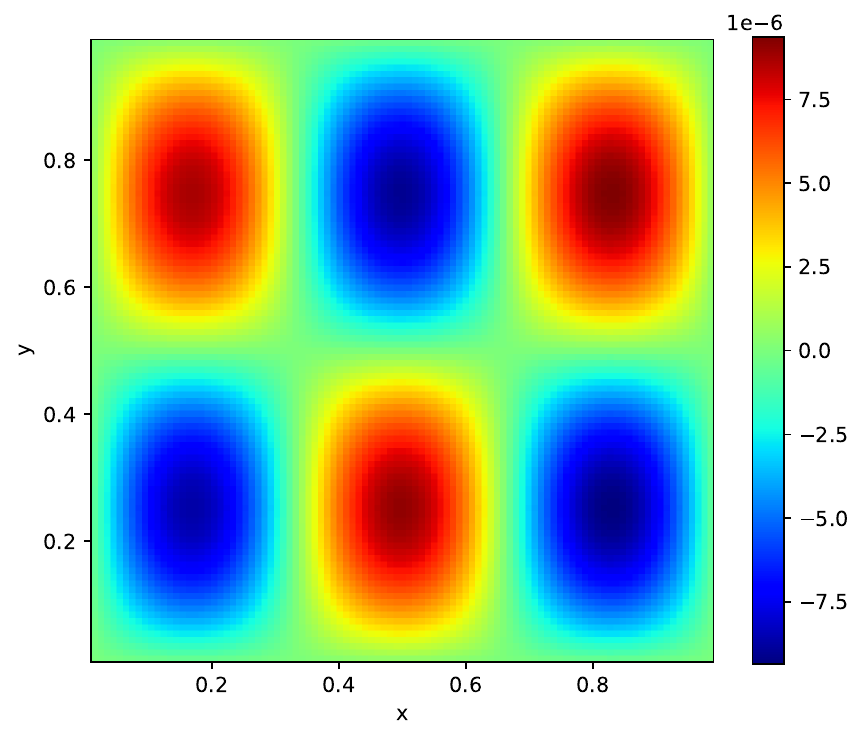}
        \caption{$\Im\big(q_{\mathrm{rec}}-q\big)$}
    \end{subfigure}
    \caption{Recovered vs.\ true source (imaginary part). Parameters \((N_x,N_y,N_t)=(100,100,70)\), \(\tau = 10^{-5}\), \(\varepsilon=10^{-5}\).}
    \label{fig12}
\end{figure}

To assess robustness, we corrupt the exact final state \(u_T=u(\cdot,T)\) with circularly symmetric complex Gaussian noise at a prescribed relative level \(\delta\in(0,1)\). Let \(\xi=\xi_1+i\,\xi_2\), where \(\xi_1,\xi_2\) have i.i.d.\ standard normal entries, and define
\begin{equation}\label{eq:noisy-data}
u_T^\delta \;=\; u_T \;+\; \eta^\delta,
\qquad
\eta^\delta \;=\; \delta\,\frac{\|u_T\|_h}{\|\xi\|_h}\,\xi,
\end{equation}
so that \(\|u_T^\delta-u_T\|_h/\|u_T\|_h=\delta\). In reconstructions with noise we replace the target \(v_h^{N_t}\) in~\eqref{eq:J-discrete} by \(u_T^\delta\); the algorithm is otherwise unchanged. Misfit values are reported in the same discrete norm \(\|\cdot\|_h\).

\begin{table}[htbp]
\caption{Noisy-data reconstructions for \(\varepsilon = 10^{-5}\) and \(\tau = 10^{-5}\). The misfit is computed with \(u_T^\delta\) in place of \(u_T\) (for \(\delta=0\), \(u_T^\delta=u_T\)).}
\label{tab:ex3:noise}
\centering
\begin{tabular}{l | l l l}
\toprule
 \(\delta\) & iterations & \(\dfrac{\Vert \Psi(f)-u_T^\delta\Vert_{h}^{2}}{\Vert u_T^\delta\Vert_{h}^{2}}\) & \(\dfrac{\Vert q_{\mathrm{rec}}-q\Vert_{h}^{2}}{\Vert q \Vert_{h}^{2}}\) \\
\midrule
\(0\)                & 62   & \(1.9803\times 10^{-5}\) & \(2.7399\times 10^{-5}\) \\
\(1\times 10^{-3}\)  & 443  & \(8.5539\times 10^{-4}\) & \(1.8720\times 10^{-2}\) \\
\(5\times 10^{-3}\)  & 8764 & \(1.4810\times 10^{-3}\) & \(7.3337\times 10^{-1}\) \\
\bottomrule
\end{tabular}
\end{table}

In the presence of noise, the Tikhonov parameter \(\varepsilon\) must balance bias and variance. For instance, for \(\delta=10^{-3}\) we also considered \(\varepsilon=10^{-3}\); in that case, after \(109\) iterations the relative source error is
\[
\frac{\Vert q_{\mathrm{rec}}-q\Vert_{h}^{2}}{\Vert q \Vert_{h}^{2}}
= 5.1450\times 10^{-3}.
\]

\subsubsection{Example 4 (Space–time forcing)}

In this example we consider a space–time forcing of the form
\begin{equation}\label{eq:ex4f}
    f(x,y,t)\;=\; i\,\exp\!\left(-\frac{\big((x-\tfrac12)^2+(y-\tfrac12)^2\big)\,t}{2\sigma^2}\right)
    \sin(\pi x)\,\sin(\pi y),\qquad \sigma=0.12.
\end{equation}

Figures~\ref{fig13}--\ref{fig14} compare the reconstructed and measured final states, showing the real and imaginary parts of \(\Psi(f)\) and \(u_T\). The agreement is excellent; the relative (discrete) \(L^2\) data misfit is
\[
\frac{\Vert \Psi(f)-u_T\Vert_{h}^{2}}{\Vert u_T\Vert_{h}^{2}} \;=\; 1.0473\times 10^{-5}.
\]

\begin{figure}[htbp]
    \centering
    \begin{subfigure}[b]{0.32\textwidth}
        \centering
        \includegraphics[width=\textwidth]{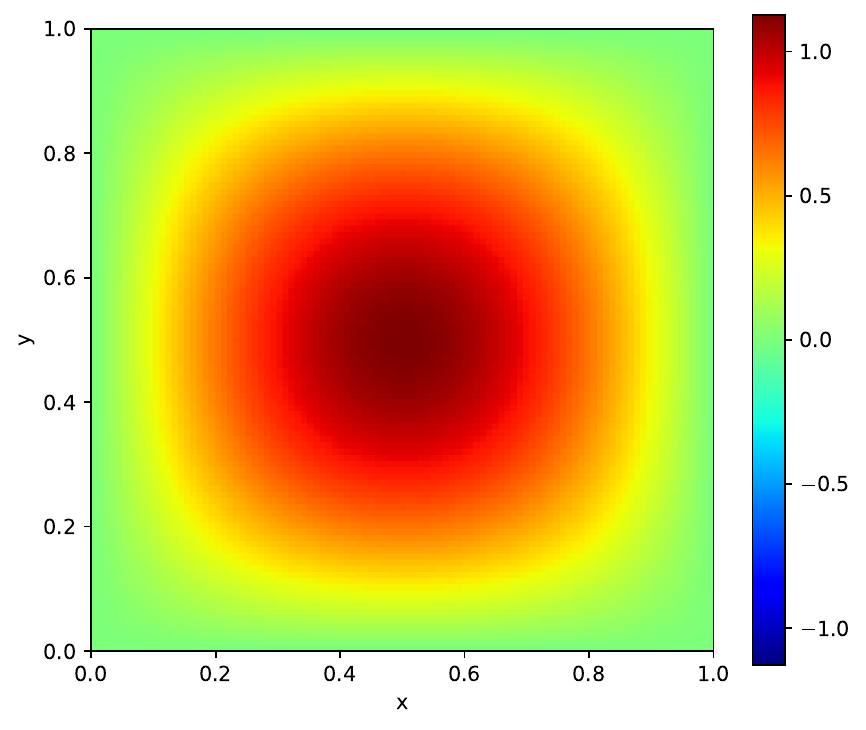}
        \caption{$\Re\big(\Psi(f)\big)$}
    \end{subfigure}\hfill
    \begin{subfigure}[b]{0.32\textwidth}
        \centering
        \includegraphics[width=\textwidth]{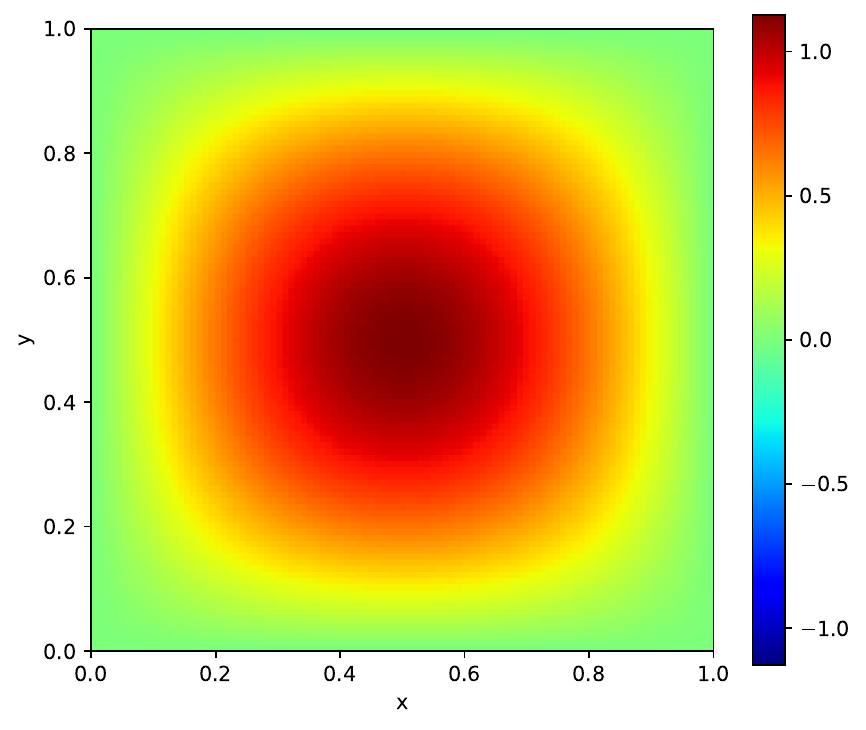}
        \caption{$\Re(u_{T})$}
    \end{subfigure}\hfill
    \begin{subfigure}[b]{0.32\textwidth}
        \centering
        \includegraphics[width=\textwidth]{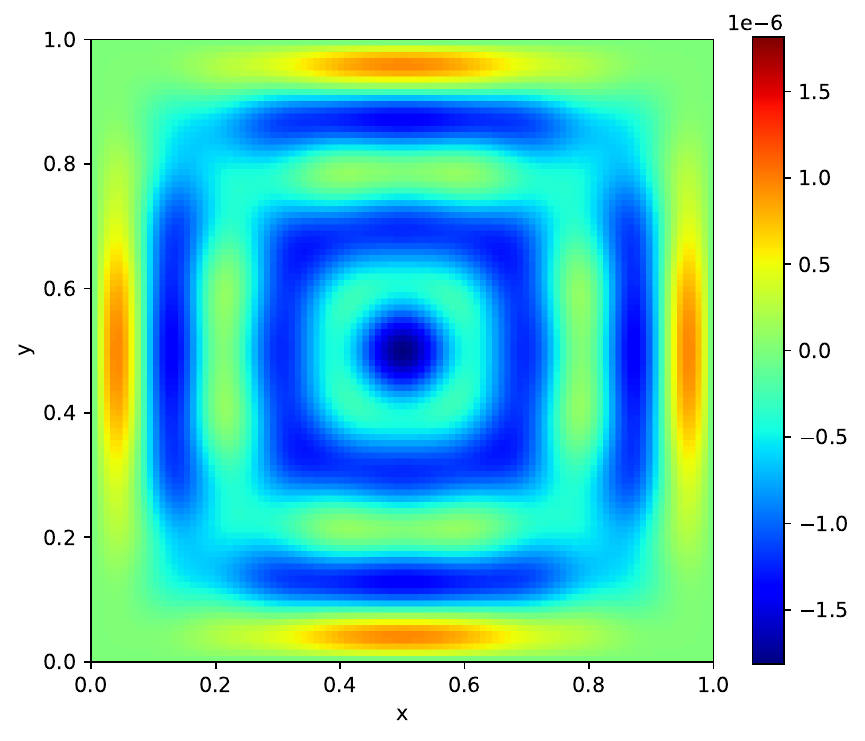}
        \caption{$\Re\big(\Psi(f)-u_{T}\big)$}
    \end{subfigure}
    \caption{Final-time comparison: real part. Parameters \((N_x,N_y,N_t)=(100,100,70)\), \(\tau = 10^{-5}\), \(\varepsilon=10^{-5}\).}
    \label{fig13}
\end{figure}

\begin{figure}[htbp]
    \centering
    \begin{subfigure}[b]{0.32\textwidth}
        \centering
        \includegraphics[width=\textwidth]{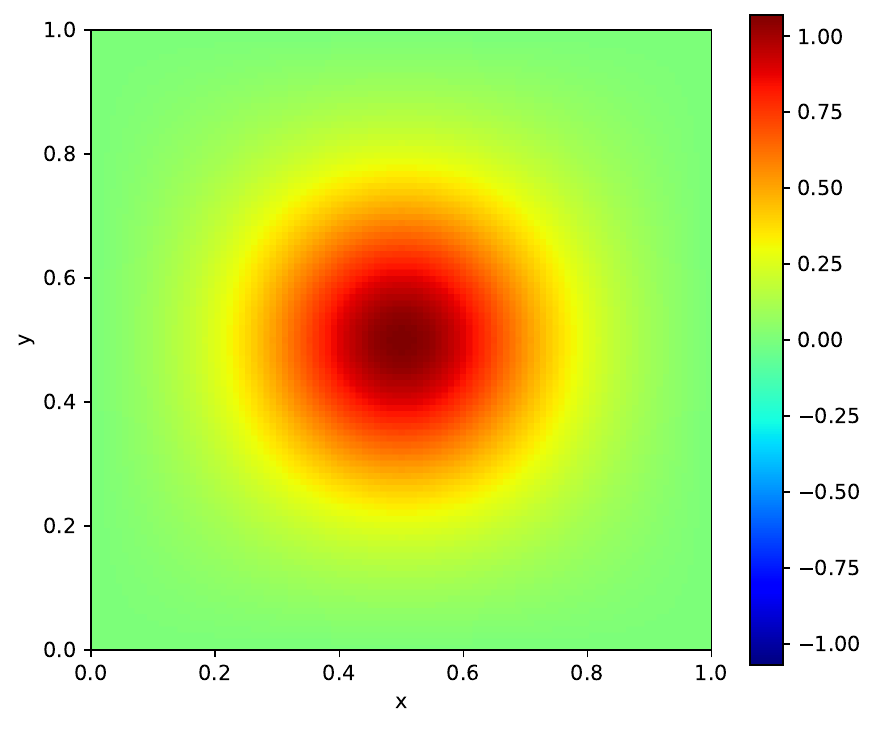}
        \caption{$\Im\big(\Psi(f)\big)$}
    \end{subfigure}\hfill
    \begin{subfigure}[b]{0.32\textwidth}
        \centering
        \includegraphics[width=\textwidth]{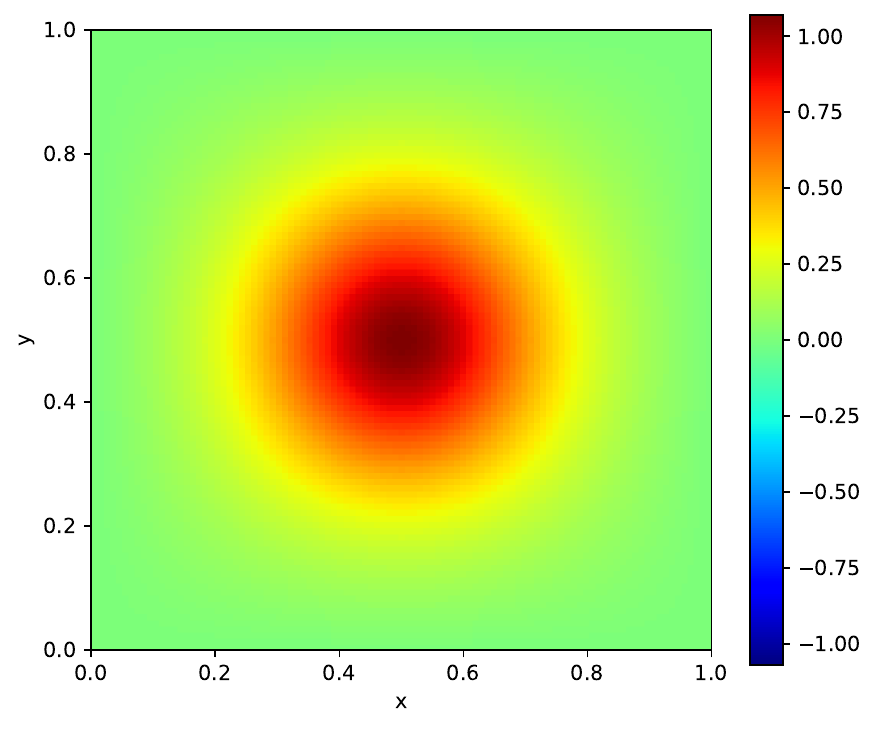}
        \caption{$\Im(u_{T})$}
    \end{subfigure}\hfill
    \begin{subfigure}[b]{0.32\textwidth}
        \centering
        \includegraphics[width=\textwidth]{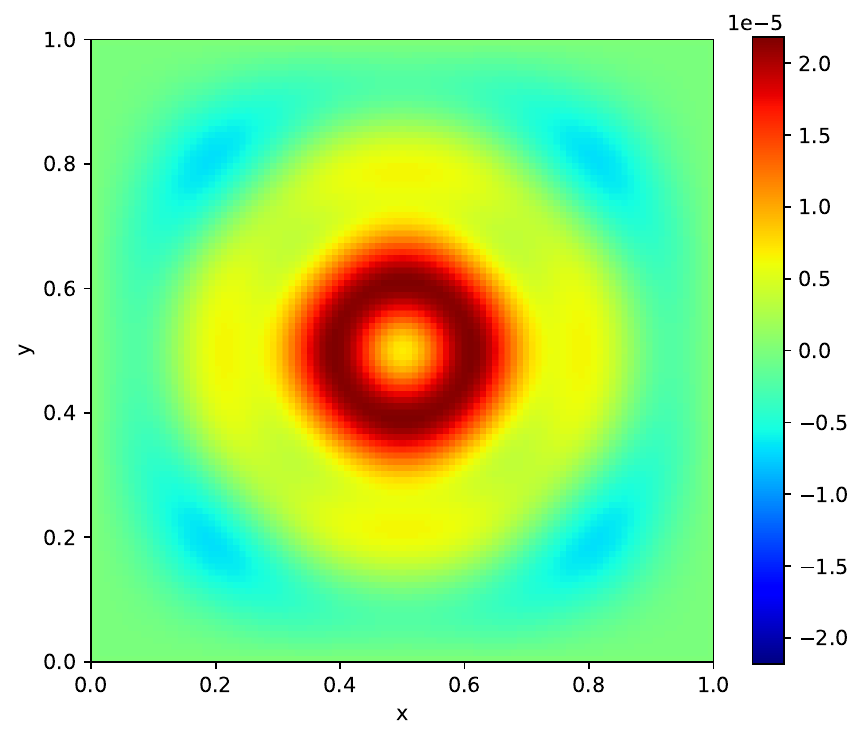}
        \caption{$\Im\big(\Psi(f)-u_{T}\big)$}
    \end{subfigure}
    \caption{Final-time comparison: imaginary part. Parameters \((N_x,N_y,N_t)=(100,100,70)\), \(\tau = 10^{-5}\), \(\varepsilon=10^{-5}\).}
    \label{fig14}
\end{figure}

Figures~\ref{fig15}--\ref{fig16} display the recovered forcing \(f_{\mathrm{rec}}\) and the ground-truth \(f\) from \eqref{eq:ex4f} at the final time slice \(t=T\). The reconstruction of the state is essentially exact, whereas the recovered forcing exhibits a moderate discrepancy. The relative (discrete) space–time error is
\[
\frac{\Vert f_{\mathrm{rec}}-f\Vert_{h,t}^{2}}{\Vert f \Vert_{h,t}^{2}}
\;=\; 3.1720\times 10^{-1}.
\]

\begin{figure}[htbp]
    \centering
    \begin{subfigure}[b]{0.32\textwidth}
        \centering
        \includegraphics[width=\textwidth]{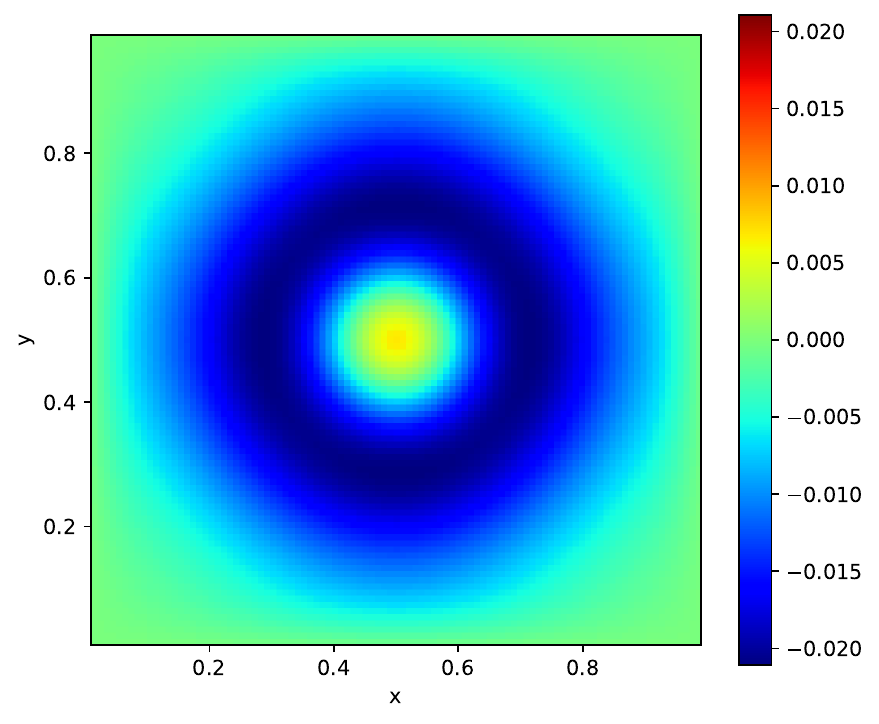}
        \caption{$\Re\big(f_{\mathrm{rec}}(\cdot,T)\big)$}
    \end{subfigure}\hfill
    \begin{subfigure}[b]{0.32\textwidth}
        \centering
        \includegraphics[width=\textwidth]{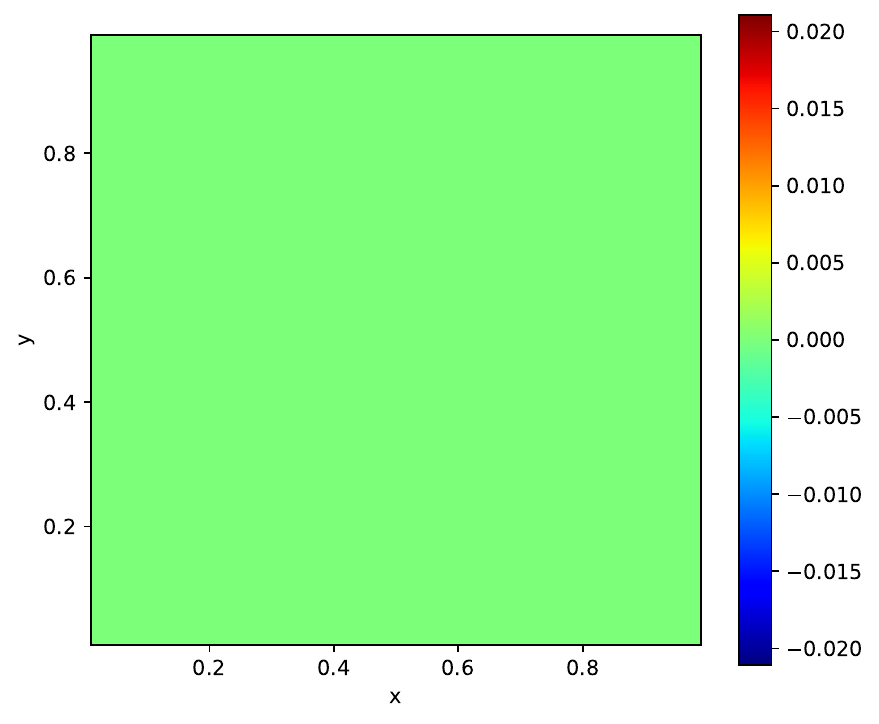}
        \caption{$\Re\big(f(\cdot,T)\big)$}
    \end{subfigure}\hfill
    \begin{subfigure}[b]{0.32\textwidth}
        \centering
        \includegraphics[width=\textwidth]{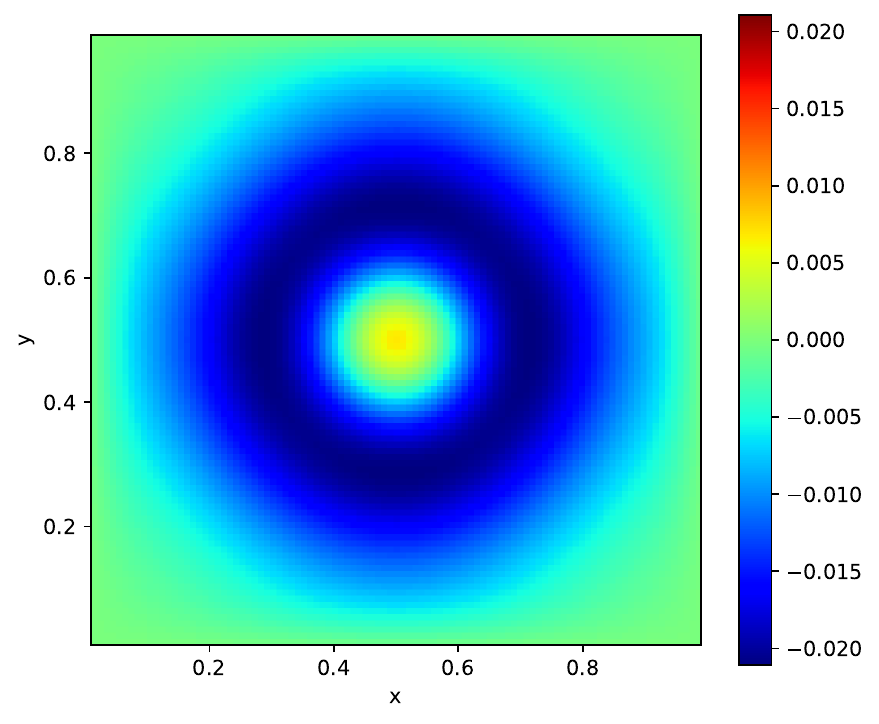}
        \caption{$\Re\big(f_{\mathrm{rec}}(\cdot,T)-f(\cdot,T)\big)$}
    \end{subfigure}
    \caption{Recovered vs.\ true forcing at \(t=T\) (real part). Parameters \((N_x,N_y,N_t)=(100,100,70)\), \(\tau = 10^{-5}\), \(\varepsilon=10^{-5}\).}
    \label{fig15}
\end{figure}

\begin{figure}[htbp]
    \centering
    \begin{subfigure}[b]{0.32\textwidth}
        \centering
        \includegraphics[width=\textwidth]{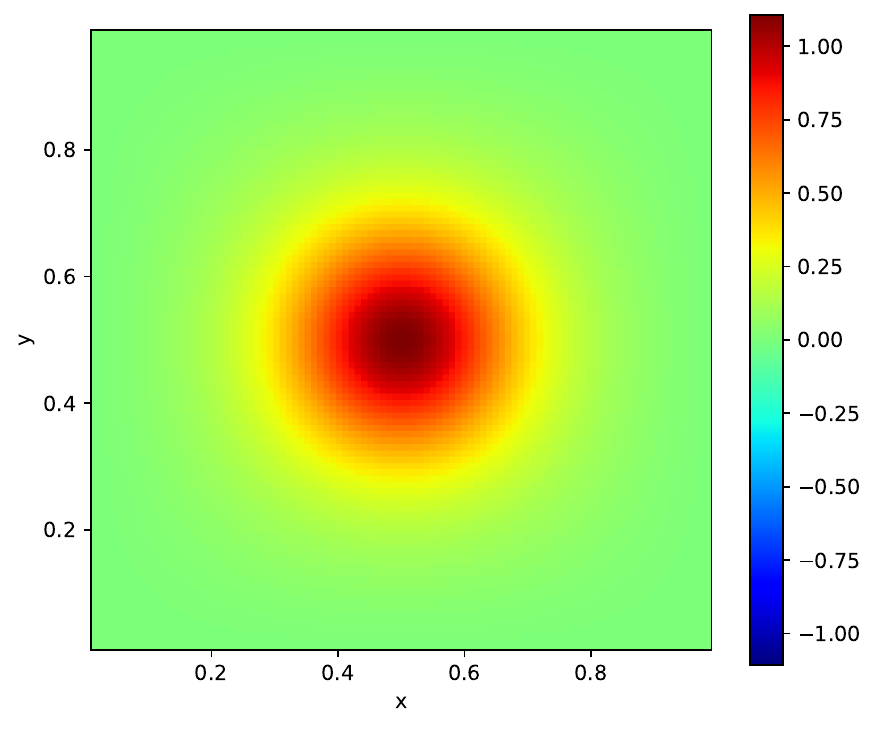}
        \caption{$\Im\big(f_{\mathrm{rec}}(\cdot,T)\big)$}
    \end{subfigure}\hfill
    \begin{subfigure}[b]{0.32\textwidth}
        \centering
        \includegraphics[width=\textwidth]{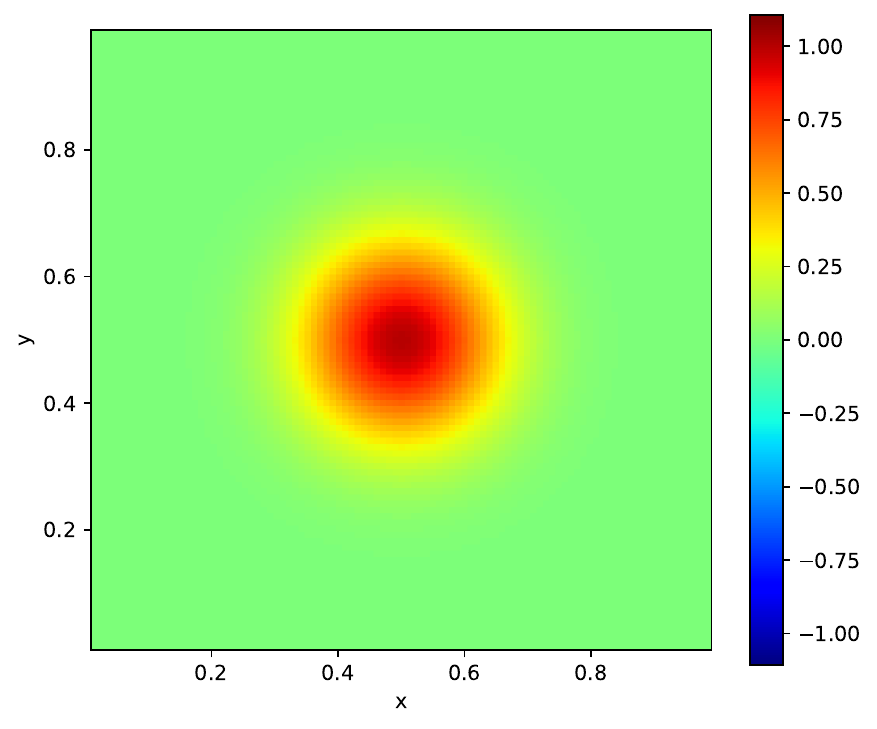}
        \caption{$\Im\big(f(\cdot,T)\big)$}
    \end{subfigure}\hfill
    \begin{subfigure}[b]{0.32\textwidth}
        \centering
        \includegraphics[width=\textwidth]{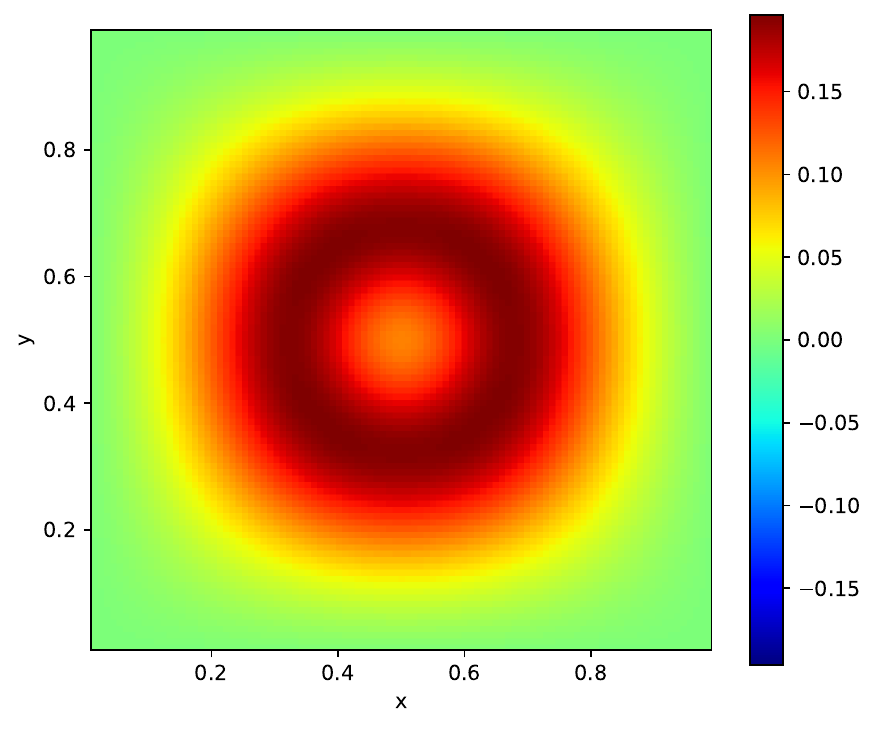}
        \caption{$\Im\big(f_{\mathrm{rec}}(\cdot,T)-f(\cdot,T)\big)$}
    \end{subfigure}
    \caption{Recovered vs.\ true forcing at \(t=T\) (imaginary part). Parameters \((N_x,N_y,N_t)=(100,100,70)\), \(\tau = 10^{-5}\), \(\varepsilon=10^{-5}\).}
    \label{fig16}
\end{figure}

\begin{remark}[Identifiability with a single terminal observation]
In space–time source inversion from a single time slice, many different forcings \(f\) can produce nearly identical terminal states \(u(\cdot,T)\). Consequently, while \(\Psi(f)\) matches \(u_T\) to high accuracy, the recovery of \(f\) is only approximate in the space–time norm. Incorporating additional data (e.g., multi-time observations) or temporal regularization can improve identifiability and reduce the reconstruction error.
\end{remark}.

\section{Conclusions and open problems}
\label{section:Summary:perspectives}

In this article, we have investigated an inverse source problem for the Ginzburg-Landau equation, aiming to recover an unknown space-time dependent forcing term from final-time observations. Within a weak-solution framework, we formulated the reconstruction task as the minimization of a Tikhonov-regularized functional and derived an explicit gradient expression through an adjoint system.

We proved the Lipschitz continuity of this gradient, ensuring both the theoretical stability of the optimization procedure and the convergence of gradient-based numerical schemes. Furthermore, we established existence and uniqueness results for quasi-solutions, thus providing a rigorous variational foundation for the inverse identification process.

From the numerical point of view, we implemented an adjoint-based optimization algorithm relying on Crank-Nicolson time discretization and second-order finite differences in space. The resulting method was tested on several two-dimensional examples, confirming its ability to accurately reconstruct complex-valued sources even under moderate noise perturbations.

The numerical evidence aligns with the analytical predictions: the regularized functional exhibits a well-behaved convex structure, and the reconstructed source converges toward the exact ones as the tolerance and regularization parameters are refined.

Beyond these specific findings, our study highlights the broader significance of the Ginzburg-Landau (GL) equation as a model for dissipative-dispersive dynamics. Because the GL framework appears in contexts as diverse as nonlinear optics, hydrodynamic instabilities, and chemical pattern formation, advances in its inverse analysis have wide interdisciplinary implications. The techniques developed here therefore provide a foundation not only for the reconstruction of hidden excitations in amplitude-equation models but also for related PDE-constrained optimization and control problems.

At the same time, several important directions remain open and deserve future exploration:
\begin{enumerate}
    \item {\bf Nonlinear extensions.} Extending the present analysis to the cubic and cubic-quintic GL equations would bring the inverse-problem framework closer to realistic physical models. The main difficulty lies in handling the nonlinear coupling between amplitude and phase, which may lead to multiple local minima and non-convex optimization landscapes. 

    \item {\bf Quantitative stability estimates.} While the current work ensures Lipschitz continuity of the gradient, obtaining explicit conditional stability bounds for the reconstructed source (possibly of logarithmic or H\"older type) remains an open theoretical challenge.
    \item {\bf Alternative regularization strategies.} Incorporating sparsity-promoting, adaptive, or data-driven regularization (e.g., $L^1$ or total-variation penalties) bounds for the reconstructed source (e.g., possibly of logarithmic or H\"older type) remains an open theoretical challenge.
    \item {\bf Numerical and computational challenges.} Large-scale simulations of the GL equation involve stiff, highly oscillatory dynamics due to the coexistence of diffusion and dispersion. Developing structure-preserving or phase-accurate discretizations (such as exponential integrators or operator-splitting schemes) may significantly enhance the stability and efficiency of inversion algorithms.
    \item {\bf Integration with machine-learning surrogates.} Combining adjoint-based inversion with neural surrogates offers a promising path to accelerate computations and to infer effective source representations in nonlinear or high-dimensional regimes.
\end{enumerate}

In summary, this article provides a rigorous and computationally validated framework for the inverse identification of source terms in the Ginzburg-Landau equation. By unifying analytical well-posedness, adjoint-based gradient characterization, and efficient numerical implementation, it establishes a solid platform for further research on dissipative-dispersive inverse problems.

Addressing the open challenges listed above will deepen the mathematical understanding of these systems and foster new connections between (nonlinear) inverse problems and modern computational approaches.

\section*{Acknowledgments}
R. Morales has received funding from the European Research Council (ERC) under the European Union’s Horizon
2030 research and innovation programme (grant agreement NO: 101096251-CoDeFeL). J. Ramirez was partially supported by Centro de Modelamiento Matemático (CMM) Grant ANID ACE210010 and Basal FB210005.
\appendix

\bibliographystyle{acm}
\bibliography{mybibfile}

\section{Existence and uniqueness results for the linearized GL equation}
\label{section:appendix:existence:uniqueness:results}

This appendix provides standard existence and regularity results for the linearized Ginzburg-Landau equation. These results justify the analytical framework adopted in the main text and ensure the well-posedness of both forward and adjoint systems.

For completeness, we recall the main existence and uniqueness results for the linearized GL equation, which provide the analytical backbone for the weak and strong solution frameworks used throughout this paper. These results, while classical in the parabolic PDE literature, are here adapted to the complex-valued GL operator with mixed dissipative-dispersive terms.

Let $\Omega \subset \mathbb{R}^{N}$ be a bounded domain with Lipschitz boundary $\partial \Omega$ and $T>0$. Consider the problem
\begin{align}
    \label{problem:existence:uniqueness}
    \begin{cases}
        \pt v -\text{div}((\alpha(x)+\beta(x)i)\nabla v) + \vec{\sigma}(x)\cdot \nabla v + p(x) v=h(x,t)&\text{ in }\Omega\times (0,T),\\
        v=0&\text{ on }\partial \Omega\times (0,T),\\
        v(\cdot,0)=v_0&\text{ in }\Omega.
    \end{cases}
\end{align}

\begin{definition}
    We say that
    \begin{itemize} 
    \item $v$ is a weak solution of \eqref{problem:existence:uniqueness} if  $v$ has the following regularity 
    \begin{align}
        \label{weak:formulation:01}
        v\in L^2(0,T;H_0^1(\Omega)),\quad \pt v\in L^2(0,T;H^{-1}(\Omega)),
    \end{align}
    and if for every test function $\varphi \in H_0^1(\Omega)$ and almost every $t\in (0,T)$, we have
    \begin{align}
        \label{weak:formulation:02}
        \dfrac{d}{dt} \langle v(t),\varphi \rangle_{H^{-1},H_0^1} + \int_\Omega (\alpha(x) + \beta(x)i)\nabla v(t)\cdot \nabla \overline{\varphi}\, dx + \int_\Omega p y\overline{\varphi}\,dx =\int_\Omega h(t)\overline{\varphi}\,dx 
    \end{align}
    with initial condition 
    \begin{align}
        \label{weak:formulation:03}
        v(\cdot,0)=v_0.
    \end{align}
    \item $v$ is a strong solution of \eqref{problem:existence:uniqueness} if $v\in L^2(0,T;H_0^1(\Omega) \cap H^2(\Omega))$ and it satisfies 
    \begin{align*}
        \pt v -\text{div}((\alpha(x) + \beta(x)i)\nabla v) + \vec{\sigma}(x)\cdot \nabla v + pv=h\text{ in }L^2(0,T;L^2(\Omega)),
    \end{align*}
    and the initial condition \eqref{weak:formulation:03}. 
    \end{itemize}
\end{definition}

Concerning the existence of weak solutions for the problem \eqref{problem:existence:uniqueness}, we have the following result:

\begin{proposition}
    \label{proposition:existence:weak:solutions}
    Assume that the coefficients $\alpha,\beta$ and $q$ satisfy
    \begin{align}
        \label{prop:existence:statement:01}
        \alpha,\beta\in L^\infty(\Omega;\mathbb{R}),\quad \alpha(x)\geq \alpha_0 >0 \text{ almost everywhere in }\Omega,
    \end{align}
    and 
    \begin{align}
        \label{prop:existence:statement:02}
        \vec{\sigma}\in [L^\infty(\Omega)]^N\text{ and }q\in L^\infty(\Omega)
    \end{align}
    Moreover, let $v_0\in L^2(\Omega)$ and $h\in L^2(0,T; H^{-1}(\Omega))$. Then, there exists a unique weak solution 
    \begin{align}
        \label{prop:existence:statement:03}
        v\in L^2(0,T;H_0^1(\Omega)) \cap C^0([0,T]; L^2(\Omega)),\quad \pt v\in L^2(0,T;H^{-1}(\Omega)),
    \end{align}
    satisfying the variational formulation \eqref{weak:formulation:02}. Moreover, there exists a constant $C>0$ depending on $\alpha_0$, $\|\alpha\|_{L^\infty}$, $\||\vec{\sigma}|\|_{L^\infty}$ and $\|q\|_{L^\infty}$, $\Omega$ and $T$ such that 
    \begin{align}
        \label{prop:existence:statement:04}
        \max_{t\in [0,T]}\|v(t)\|_{L^2(\Omega)}^2 + \int_0^T \|\nabla v(t)\|^2_{L^2(\Omega)}\,dt \leq C \left( \|v_0\|_{L^2(\Omega)}^2 + \|g\|_{L^2(0,T;H^{-1}(\Omega))}^2 \right).
    \end{align}
\end{proposition}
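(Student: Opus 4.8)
The plan is to construct the solution by a Faedo--Galerkin scheme and pass to the limit using a priori energy bounds; the only genuinely complex-analytic ingredient is checking that the sesquilinear form attached to the principal part satisfies a G\aa rding inequality despite the dispersive coefficient $\beta$. First I would fix an orthonormal basis $(w_k)_{k\geq 1}$ of $L^2(\Omega)$ consisting of Dirichlet eigenfunctions of $-\Delta$ (also orthogonal in $H_0^1(\Omega)$), and seek $v_m(t)=\sum_{k=1}^m g_k^m(t)\,w_k$ solving the finite-dimensional system obtained by testing \eqref{weak:formulation:02} against $w_1,\dots,w_m$. Since the mass matrix in the $L^2$ pairing is the identity, Carath\'eodory/Picard theory yields a unique absolutely continuous coefficient vector $g^m$ on $[0,T]$.

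The crucial step is the a priori estimate. Testing with $\overline{v_m}$ (equivalently, multiplying the $k$-th Galerkin equation by $\overline{g_k^m}$ and summing) gives
\[
\tfrac12\frac{d}{dt}\|v_m\|_{L^2}^2 + \int_\Omega(\alpha+i\beta)\,|\nabla v_m|^2\,dx + \int_\Omega (\vec{\sigma}\cdot\nabla v_m)\,\overline{v_m}\,dx + \int_\Omega q\,|v_m|^2\,dx = \langle h, v_m\rangle_{H^{-1},H_0^1}.
\]
Taking real parts annihilates the $i\beta$ contribution, since $\Re\int_\Omega(\alpha+i\beta)|\nabla v_m|^2\,dx = \int_\Omega \alpha|\nabla v_m|^2\,dx \geq \alpha_0\|\nabla v_m\|_{L^2}^2$; this is exactly where the dispersive part is rendered harmless. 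The convection term is bounded by $\||\vec{\sigma}|\|_{L^\infty}\|\nabla v_m\|_{L^2}\|v_m\|_{L^2}$ and absorbed into the coercive term by Young's inequality, the zeroth-order term by $\|q\|_{L^\infty}\|v_m\|_{L^2}^2$, and the right-hand side by $\|h\|_{H^{-1}}\|\nabla v_m\|_{L^2}$. Gr\"onwall's lemma then produces the uniform bound \eqref{prop:existence:statement:04} (with $h$ in place of $g$), so $(v_m)$ is bounded in $L^2(0,T;H_0^1(\Omega))\cap L^\infty(0,T;L^2(\Omega))$.

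From the Galerkin equations I then read off that $(\partial_t v_m)$ is bounded in $L^2(0,T;H^{-1}(\Omega))$. Extracting a subsequence converging weakly in $L^2(0,T;H_0^1(\Omega))$ with $\partial_t$ converging weakly in $L^2(0,T;H^{-1}(\Omega))$, I pass to the limit in the linear equations to identify $v$ as a weak solution, every term being weakly continuous by linearity. The regularity $v\in C^0([0,T];L^2(\Omega))$ and the attainment of the initial datum $v_0$ follow from the Lions--Magenes embedding of $\{v\in L^2(0,T;H_0^1(\Omega)):\partial_t v\in L^2(0,T;H^{-1}(\Omega))\}$ into $C^0([0,T];L^2(\Omega))$. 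Uniqueness is immediate from linearity: the difference of two solutions solves the homogeneous problem with $v_0=0$ and $h=0$, and the same energy identity forces it to vanish.

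The main obstacle I anticipate is not the existence machinery, which is classical, but organizing the G\aa rding inequality so that the complex principal coefficient and the first-order convection term are simultaneously controlled; concretely, one must verify that $\Re\,a(v,v)\geq \alpha_0\|\nabla v\|_{L^2}^2 - C\|v\|_{L^2}^2$ with $C$ depending only on the stated norms, after which Gr\"onwall closes the argument. A clean alternative is to invoke J.-L. Lions' abstract theorem for parabolic problems in the Gelfand triple $H_0^1(\Omega)\hookrightarrow L^2(\Omega)\hookrightarrow H^{-1}(\Omega)$, whose hypotheses reduce precisely to this G\aa rding estimate.
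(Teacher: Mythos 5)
Your proposal is correct and follows essentially the same route as the paper: a Faedo--Galerkin construction with Dirichlet eigenfunctions, the energy estimate obtained by testing with $v_m$ and taking real parts so that the $i\beta$ contribution to the principal term vanishes, absorption of the convection and zeroth-order terms via Young's inequality, Gr\"onwall, a uniform bound on $\partial_t v_m$ in $L^2(0,T;H^{-1}(\Omega))$, and passage to the limit followed by the Lions embedding for $C^0([0,T];L^2(\Omega))$ and a Gr\"onwall-based uniqueness argument. The only cosmetic difference is that the paper invokes the Aubin--Lions lemma for strong $L^2$-convergence of the approximations, whereas you pass to the limit using weak convergence alone, which indeed suffices for this linear problem.
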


The existence of weak solutions under mild assumptions on the coefficients ensures that the variational framework is robust. In particular, it shows that the adjoint system, central to the gradient formula, is always well-posed.

\begin{proof}
    We divide the proof into several steps:

    \begin{itemize} 
    \item \textit{Step 1: Galerkin basis and finite dimensional problems.} Let $\{w_k\}_{k\geq 1}$ be an orthonormal basis of $L^2(\Omega)$ consisting of functions in $H_0^1(\Omega)$ (for instance we can consider the eigenfunctions of the (complex) Dirichlet Laplacian). For each $m\in \mathbb{N}$, define the finite-dimensional space $V_m:=\text{span}\{w_1,\ldots,w_m\}$ and the $L^2$-orthogonal projection $P_m:L^2 \to V_m$.
    
    We look for $y_m$ of the form 
    \begin{align*}
        v_m(t);=\sum_{k=1}^m c_k^{(m)} (t) w_k,
    \end{align*}
    with coefficients determined by the finite-dimensional system
    \begin{align}
        \label{proof:existence:1:01}
        \begin{split} 
        &(\pt y_m, \varphi)_{L^2(\Omega)} + \int_\Omega (\alpha+\beta i)\nabla v_m(t)\cdot \nabla \overline{\varphi}\,dx + \int_\Omega (\vec{\sigma} \cdot \nabla v_m + p v_m)\overline{\varphi}\,dx\\
        =&\langle g(t),\varphi \rangle_{H^{-1},H_0^1},\quad \forall \varphi \in V_m,
        \end{split}
    \end{align}
    with initial condition $v_m(0)=P_m v_0$. Notice that \eqref{proof:existence:1:01} defines a linear ODE system for the coefficients $\{c_k^{(m)}\}$. Hence, by classical results, there exists a unique maximal $C^1$ solution on $[0,T]$.

    \item \textit{Step 2: A priori uniform energy estimate in $m$.} Now, we choose $\varphi=v_m(t)$ in \eqref{proof:existence:1:01} and take the real part. Moreover, taking into account that 
    \begin{align*}
        \Re ((\alpha+\beta i)\nabla v\cdot \nabla \overline{v})=a|\nabla v|^2,
    \end{align*}
    and
    \begin{align*}
        \Re \int_\Omega (\vec{\sigma}\cdot \nabla v_m(t) \overline{v}_m + p |v_m|^2)\,dx \geq -\| |\vec{\sigma}|\|_{L^\infty} \cdot \||\nabla v_m|\|_{L^2} \cdot \|v_m\|_{L^2} -\|q\|_{L^\infty} \|v_m\|_{L^2}^2, 
    \end{align*}
    a.e. $t\in (0,T)$, and using Young's inequality we have
    \begin{align}
        \label{proof:existence:1:02}
        \begin{split} 
        &\dfrac{1}{2}\dfrac{d}{dt}\|v_m\|_{L^2}^2 +\alpha_0 \|\nabla v_m\|_{L^2}^2 \\
        \leq & \dfrac{\alpha_0}{2} \|\nabla v_m\|_{L^2}^2 + \left( \dfrac{\| |\sigma|\|_{L^\infty}}{\alpha_0} + \|p\|_{L^\infty} \right) \|v_m\|_{L^2}^2 + \dfrac{1}{\alpha_0} \|h\|_{H^{-1}}^2 .
        \end{split}
    \end{align}

    Equivalently, this means that 
    \begin{align}
        \label{proof:existence:1:03}
        \dfrac{d}{dt} \|v_m\|_{L^2}^2 + \alpha_0 \| |\nabla v_m|\|_{L^2}^2  \leq \gamma_1 \|v_m\|_{L^2}^2 + \dfrac{2}{\alpha_0} \|h\|_{H^{-1}}^2,
    \end{align}
    where $\gamma:=2 \left(\dfrac{\| |\vec{\sigma}|\|_{L^\infty}^2}{\alpha_0} + \|p\|_{L^\infty} \right)$. Now, by Gr\"onwall's inequality, we deduce that 
    \begin{align}
        \label{proof:existence:1:05}
        \|v_m(t)\|_{L^2}^2\leq e^{\gamma_1 t} \left( \|v_0\|_{L^2}^2 + \dfrac{2}{a_0} \int_0^t e^{\gamma_1 s} \|h(s)\|_{H^{-1}}^2 \,ds \right),\quad \forall t\in [0,T].
    \end{align}

    In particular, by \eqref{proof:existence:1:05}, the sequence $(v_m)$ is bounded in $L^\infty(0,T;L^2(\Omega))$. Now, integrating in \eqref{proof:existence:1:03} in time and using \eqref{proof:existence:1:05} we obtain the following uniform bound for the sequence $(\nabla v_m)$:
    \begin{align}
        \label{proof:existence:1:06}
        \int_0^T \| \nabla v_m\|_{L^2(\Omega)}^2\,dt \leq C\left( \|v_0\|_{L^2(\Omega)}^2 + \|h\|_{L^2(0,T;H^{-1}(\Omega))}^2 \right),
    \end{align}
    with $C>0$ independent of $m$.

    \item \textit{Step 3: Bound of the time derivative.} Let us define the operator $\mathcal{A}u :=-\text{div}((a+bi)\nabla u)$, which is continuous from $H_0^{1}(\Omega)$ to $H^{-1}(\Omega)$. Then, from the Galerkin equation \eqref{proof:existence:1:01} viewed in $H^{-1}(\Omega)$, we have 
    \begin{align}
        \label{proof:existence:1:07}
        \|\pt v_m\|_{L^2(0,T;H^{-1}(\Omega))} \leq C \left( \|v_m\|_{L^2(0,T;H_0^1(\Omega))} + \|v_m\|_{L^2(0,T;L^2(\Omega))} + \|h\|_{L^2(0,T;H^{-1}(\Omega))} \right).
    \end{align}

    This shows that $(\pt y_m)$ is a bounded sequence in $L^2(0,T;H^{-1}(\Omega))$ uniformly in $m$.

    \item \textit{Step 4: Compactness and passage to the limit.} By the uniform bounds \eqref{proof:existence:1:05} \eqref{proof:existence:1:06} and \eqref{proof:existence:1:07}, we deduce that $(y_m)$ is bounded in $L^2(0,T;H_0^1(\Omega)) \cap H^1(0,T;H^{-1}(\Omega))$. Now, by Aubin-Lions lemma implies (after extracting a subsequence, still denoted $y_m$) that 
    \begin{align}
        \label{proof:existence:1:08}
        v_m \rightharpoonup v\text{ in }L^2(0,T;H_0^1(\Omega)),\quad v_m \rightarrow v\text{ in }L^2(0,T;L^2(\Omega)),\quad \pt v_m \rightharpoonup \pt v\text{ in }L^2(0,T;H^{-1}(\Omega)). 
    \end{align}

    Now, by \eqref{proof:existence:1:08} and the assumptions \eqref{prop:existence:statement:01} and \eqref{prop:existence:statement:02} on the coefficients $\alpha,\beta, q$ and passing to the limit in \eqref{proof:existence:1:01}, we see that $v$ satisfies the variational formulation \eqref{weak:formulation:02} for every $\varphi\in H_0^1(\Omega)$ and almost everywhere on $t\in (0,T)$. The condition \eqref{weak:formulation:03} follows from the fact that $v\in C^0([0,T];L^2(\Omega))$ (by standard Lions theorems) and the fact that $P_m v_0 \to v_0$ in $L^2(\Omega)$. 

    The uniqueness follows from \eqref{weak:formulation:02} and Gronwall's inequality. This completes the proof of Proposition \ref{proposition:existence:weak:solutions}.
    \end{itemize}
\end{proof}

\begin{proposition}
    \label{proposition:existence:strong:solutions}
    Let $\Omega\subset \mathbb{R}^N$ be a bounded domain with $C^2$ boundary and let $T>0$. Assume that the coefficients $\alpha$ and $\beta$ satisfy 
    \begin{align}
        \label{assumption:prop:existence:02:alpha:beta}
        \alpha,\beta\in W^{1,\infty}(\Omega;\mathbb{R}),\quad \alpha(x)\geq \alpha_0 >0\text{ almost everywhere in }\Omega.
    \end{align}
    Moreover, consider \eqref{prop:existence:statement:02}, $v_0\in H_0^1(\Omega)$ and $h\in L^2(0,T;L^2(\Omega))$. Then, there exists a strong solution $v$ of \eqref{problem:existence:uniqueness} satisfying
    \begin{align*}
        v\in H^1(0,T;L^2(\Omega)) \cap C^0([0,T];H_0^1(\Omega)) \cap L^2(0,T; H^2(\Omega) \cap H_0^1(\Omega)).
    \end{align*}

    Moreover, there exists a constant $C>0$ depending only on $\alpha_0$, $\|\alpha\|_{W^{1,\infty}}$, $\||\vec{\sigma}|\|_{L^\infty}$, $\|p\|_\infty$, $\Omega$ and $T$ such that 
    \begin{align*}
        \begin{split} 
        &\|v\|_{L^2(0,T;H^2(\Omega))} + \|v\|_{C^0([0,T];H_0^1(\Omega))} + \|\pt v\|_{L^2(0,T;L^2(\Omega))} \\
        \leq &C\left(\|h\|_{L^2(0,T;L^2(\Omega))} + \|v_0\|_{H_0^1(\Omega)} \right).
        \end{split}
    \end{align*}
\end{proposition}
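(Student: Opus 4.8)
The plan is to upgrade the Galerkin scheme used in the proof of Proposition \ref{proposition:existence:weak:solutions} to the higher-regularity setting, exploiting the stronger hypotheses on the data ($v_0\in H_0^1(\Omega)$, $h\in L^2(0,T;L^2(\Omega))$) and on the coefficients ($\alpha,\beta\in W^{1,\infty}(\Omega)$, $\partial\Omega\in C^2$). Concretely, I would take $\{w_k\}$ to be the eigenfunctions of the Dirichlet Laplacian, so that $-\Delta v_m\in V_m$ for every $m$ and hence $-\Delta \overline{v_m}$ is an admissible test function in the finite-dimensional system \eqref{proof:existence:1:01}. The first energy estimate \eqref{proof:existence:1:05}--\eqref{proof:existence:1:06} is inherited verbatim and provides uniform control of $v_m$ in $L^\infty(0,T;L^2(\Omega))\cap L^2(0,T;H_0^1(\Omega))$; everything new comes from a second-order estimate.

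The core step is to test \eqref{proof:existence:1:01} with $\varphi=-\Delta v_m$ and take real parts, integrating by parts in the principal part. The time term yields $\tfrac12\tfrac{d}{dt}\|\nabla v_m\|_{L^2}^2$, while the principal part produces $\int_\Omega(\alpha+\beta i)|\Delta v_m|^2\,dx$ plus a commutator term $\int_\Omega \nabla(\alpha+\beta i)\cdot\nabla v_m\,\Delta\overline{v_m}\,dx$. The decisive observation is that $|\Delta v_m|^2$ is real, so the dispersive contribution $\beta i\,|\Delta v_m|^2$ is purely imaginary and drops out upon taking real parts, leaving the coercive term $\int_\Omega\alpha|\Delta v_m|^2\,dx\geq\alpha_0\|\Delta v_m\|_{L^2}^2$. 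The commutator term, together with the first-order term $\vec{\sigma}\cdot\nabla v_m$, the zeroth-order term $p\,v_m$ and the source $h$, is estimated by Cauchy--Schwarz and absorbed into $\alpha_0\|\Delta v_m\|_{L^2}^2$ via Young's inequality, at the cost of lower-order quantities already controlled by the first energy estimate. This yields a differential inequality of the form $\tfrac{d}{dt}\|\nabla v_m\|_{L^2}^2+\alpha_0\|\Delta v_m\|_{L^2}^2\leq C(\|\nabla v_m\|_{L^2}^2+\|v_m\|_{L^2}^2+\|h\|_{L^2}^2)$, and Grönwall's inequality (using $\|\nabla v_m(0)\|_{L^2}\leq\|\nabla v_0\|_{L^2}$) bounds $\nabla v_m$ uniformly in $L^\infty(0,T;L^2(\Omega))$ and $\Delta v_m$ uniformly in $L^2(0,T;L^2(\Omega))$.

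From here the remaining conclusions follow by standard arguments. Elliptic regularity for the Dirichlet problem on the $C^2$ domain $\Omega$ gives $\|v_m\|_{H^2(\Omega)}\leq C(\|\Delta v_m\|_{L^2(\Omega)}+\|v_m\|_{L^2(\Omega)})$, so $v_m$ is bounded in $L^2(0,T;H^2(\Omega)\cap H_0^1(\Omega))$; reading $\pt v_m$ off the equation and using $\alpha,\beta\in W^{1,\infty}(\Omega)$ then bounds $\pt v_m$ in $L^2(0,T;L^2(\Omega))$. Extracting weakly/weakly-$*$ convergent subsequences and passing to the limit in \eqref{proof:existence:1:01} (exactly as in Proposition \ref{proposition:existence:weak:solutions}, the linearity making weak convergence sufficient) identifies the limit $v$ as a strong solution with the claimed regularity; the embedding $L^2(0,T;H^2(\Omega))\cap H^1(0,T;L^2(\Omega))\hookrightarrow C^0([0,T];H_0^1(\Omega))$ yields time-continuity and makes sense of the initial condition, and lower semicontinuity of the norms transfers the uniform bounds into the stated a priori estimate.

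The main obstacle is precisely the dispersive term carried by the imaginary coefficient $\beta$: testing with $\pt v_m$ (the naïve route to controlling $\|\pt v\|_{L^2(0,T;L^2(\Omega))}$) produces a term $\Im\int_\Omega\beta\,\nabla v_m\cdot\nabla\overline{\pt v_m}\,dx$ that cannot be closed, since $\nabla\pt v_m$ is not controlled and the integrand is not a perfect time derivative. The choice $\varphi=-\Delta v_m$ circumvents this by sending $\beta$ into the harmless purely-imaginary term $i\beta|\Delta v_m|^2$ and into a lower-order commutator that $W^{1,\infty}$-regularity of $\beta$ renders controllable; this is the one place where both $\beta\in W^{1,\infty}(\Omega)$ and $\partial\Omega\in C^2$ are genuinely used.
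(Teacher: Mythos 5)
Your proposal is correct, but it reaches the second\mbox{-}order estimate by a genuinely different test function than the paper. The paper's Step~2 takes $\varphi=\pt v_m$ in the Galerkin system, extracts $\tfrac12\tfrac{d}{dt}\|\sqrt{\alpha}\,\nabla v_m\|_{L^2}^2$ from the principal part, bounds $\|\pt v_m\|_{L^2(0,T;L^2)}$ first, and only then obtains the $H^2$ bound in Step~3 by applying elliptic regularity to the full operator $A v_m = h-\pt v_m-\vec{\sigma}\cdot\nabla v_m - p v_m$. You instead test with $\varphi=-\Delta v_m$ (admissible and with vanishing boundary terms precisely because the basis is spectral, so $-\Delta v_m\in V_m\subset H_0^1$), obtain $\nabla v_m\in L^\infty(0,T;L^2)$ and $\Delta v_m\in L^2(0,T;L^2)$ directly, pass to $H^2$ via elliptic regularity of the Dirichlet Laplacian alone, and finally read $\pt v_m$ off the equation using boundedness of the $L^2$-projection. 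The two routes deliver the same conclusion, but yours buys something real: the obstruction you identify is genuine, since $\Re\int_\Omega i\beta\,\nabla v_m\cdot\nabla\overline{\pt v_m}\,dx=-\Im\int_\Omega \beta\,\nabla v_m\cdot\nabla\overline{\pt v_m}\,dx$ is neither a perfect time derivative nor controllable by the quantities at hand, and the paper's displayed computation keeps only the $\alpha$ contribution without accounting for this term. Your choice of test function sends $\beta$ into the purely imaginary quantity $i\beta|\Delta v_m|^2$, which disappears upon taking real parts, and into a commutator of first order that $\beta\in W^{1,\infty}(\Omega)$ controls; this makes the argument close cleanly for $\beta\not\equiv 0$, at the modest price of invoking $H^2$ elliptic regularity for $-\Delta$ on the $C^2$ domain before, rather than after, the time-derivative bound.
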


The higher regularity of strong solutions is particularly relevant for numerical analysis. It guarantees that discretizations based on finite differences or finite elements converge to the exact solution, justifying the use of the numerical schemes implemented in Section \ref{section:Numerical:experiments}.

\begin{proof}
    We argue by the Galerkin method. 
    \begin{itemize}
        \item \textit{Step 1: Galerkin approximations.} Let $(w_k)_{k\in \mathbb{N}}$ be the Dirichlet Laplacian eigenfunctions. For $m\in \mathbb{N}$, we define the finite-dimensional space $V_m:=\text{span}\{w_1,\ldots,w_m\}$ and set 
        \begin{align*}
            v_m(t)=\sum_{k=1}^m c_k^{(m)} w_k,
        \end{align*}
        which solves the following finite-dimensional system
        \begin{align}
            \label{proof:prop:existence:2:01}
            \langle \pt v_m, \varphi \rangle_{H^{-1},H_0^1} + \int_\Omega (\alpha+\beta i)\nabla v_m\cdot \nabla \overline{\varphi}\,dx + \int_\Omega (\vec{\sigma} \cdot \nabla v_m + pv_m )\overline{\varphi}\,dx =\int_\Omega f\overline{\varphi}\, dx,\quad \forall\, \varphi \in V_m,
        \end{align}
        where we know that by Proposition \ref{proposition:existence:weak:solutions} that for each $m\in \mathbb{N}$, 
        \begin{align}
            \label{proof:prop:existence:2:02}
            v_m \in L^\infty(0,T;L^2(\Omega))\cap L^2(0,T;H_0^1(\Omega)).
        \end{align}
        \item \textit{Step 2: Energy estimates.} Taking $\varphi=\pt y_m$ in \eqref{proof:prop:existence:2:01} and taking real part, we have 
        \begin{align}
            \label{proof:prop:existence:2:03}
            \begin{split} 
            &\|\pt v_m\|_{L^2(\Omega)}^2 + \Re \int_\Omega (\alpha+\beta i)\nabla v_m \cdot \nabla \overline{\pt v_m}\,dx + \Re \int_\Omega (\vec{\sigma}\cdot \nabla v_m + pv_m) \overline{\pt v_m}\,dx \\
            =& \Re (f,\pt v_m).
            \end{split} 
        \end{align}

        Now, integrating by parts in time, using the fact that $\alpha$ is time-independent and by Young's inequality, we have for a.e. $t\in (0,T)$:
        \begin{align*}
            &\|\pt v_m (t)\|_{L^2}^2 + \dfrac{1}{2} \|\sqrt{\alpha} |\nabla v_m(t)|\|_{L^2}^2 \\
            \leq & \||\vec{\sigma}|\|_{L^\infty} \||\nabla v_m(t)|\|_{L^2} \|\pt v_m(t)\|_{L^2}\|\pt v_m(t)\|_{L^2} + \|p\|_{L^\infty} \|v_m(t)\|_{L^2} \|\pt v_m(t)\|_{L^2}\\
            &+ \|h(t)\|_{L^2} \|\pt v_m(t)\|_{L^2}\\
            \leq & \dfrac{1}{2} \|\pt v_m(t)\|_{L^2}^2 + \left( \dfrac{3}{2\alpha_0} \| |\vec{\sigma}|\|_{L^\infty}^2 + \dfrac{3}{2\alpha_0 C_p} \|p\|_{L^\infty}^2 \right) \|\sqrt{\alpha} |\nabla v_m(t)|\|_{L^2}^2 + \dfrac{3}{2} \|h(t)\|_{L^2}^2,
        \end{align*}
        where $C_p>0$ is the constant appeared in the Poincar\'e's inequality. This means that for a.e. $t\in (0,T)$ we have 
        \begin{align}
            \label{proof:prop:existence:2:04}
            \| \pt v_m (t)\|_{L^2}^2 + \dfrac{d}{dt} \|\sqrt{\alpha} |\nabla v_m(t)|\|_{L^2}^2 \leq \gamma_2 \| \sqrt{\alpha}|\nabla v_m(t)|\|_{L^2}^2 + 3\|h(t)\|_{L^2}^2,
        \end{align}
        where $\gamma_2:=\dfrac{3}{\alpha_0} \left( \||\sigma|\|_{L^\infty}^2 + C_p^{-1} \|q\|_{L^\infty}^2  \right)$. By Gronwall's inequality, we have 
        \begin{align}
            \label{proof:prop:existence:2:05}
            \|\sqrt{\alpha} |\nabla v_m(t)|\|_{L^2}^2 \leq e^{\gamma_2 t} \left( \|\alpha\|_{L^\infty} \|\nabla v_m(0)\|_{L^2}^2 + 3\int_0^t e^{-\gamma_2 s} \|h(s)\|_{L^2}^2\,ds \right),\quad \text{ a.e. }t\in [0,T].
        \end{align}

        Moreover, integrating in time in \eqref{proof:prop:existence:2:04} and using the estimate \eqref{proof:prop:existence:2:05}, we deduce that 
        \begin{align}
            \label{proof:prop:existence:2:06}
            \|\pt v_m\|_{L^2(0,T;L^2(\Omega))} \leq C \left( \|v_0\|_{H_0^1(\Omega)} + \|f\|_{L^2(0,T;L^2(\Omega))} \right).
        \end{align}


        \item \textit{Step 3: Estimates on $H^2$.} Let us consider the operator $A:H^2(\Omega) \cap H_0^1(\Omega) \to L^2(\Omega)$ defined by $$Au:=-\text{div}((\alpha+\beta i)\nabla u) + \vec{\sigma}\cdot \nabla u +  pu.$$
        
        Under the assumptions on $\alpha$, $\beta$, $\vec{\sigma}$ and $q$, $A$ enjoys $H^2$-elliptic regularity, i.e., if $g\in L^2(\Omega)$ and $w\in H_0^1(\Omega)$ solves $Aw=g$ in $\Omega$, then $w\in H^2(\Omega) \cap H_0^1(\Omega)$ and we have the following estimate:
        \begin{align*}
            \|w\|_{H^2(\Omega)}\leq C \left(\|g\|_{L^2(\Omega)} + \|w\|_{L^2(\Omega)} \right),
        \end{align*}
        where $C$ depends on $\alpha_0, \|a\|_{W^{1,\infty}}, \|b\|_{W^{1,\infty}}$ and $\Omega$. Now, for a.e. $t\in (0,T)$, the Galerkin function $v_m (t) \in V_m$ satisfies, in the strong sense
        \begin{align*}
            A v_m(t)=g_m(t):=h(t)- \pt v_m(t) -\vec{\sigma}\cdot \nabla v_m(t) - pv_m(t).
        \end{align*}

        It is clear that $g_m(t)\in L^2(\Omega)$ for a.e. $t\in (0,T)$ and the map $t\mapsto \|g_m(t)\|_{L^2(\Omega)}$ belongs to $L^2(0,T)$ uniformly in $m$. Now, by elliptic regularity of $A$, we obtain 
        \begin{align}
            \label{proof:prop:existence:2:08}
            \|v_m(t)\|_{H^2(\Omega)}\leq C \left(\|g_m(t)\|_{L^2(\Omega)} + \|v_m\|_{L^2(\Omega)} \right)\quad \text{a.e. in }(0,T).
        \end{align}
        Now, from \eqref{proof:prop:existence:2:08}, we use the estimates \eqref{prop:existence:statement:04} and \eqref{proof:prop:existence:2:06} to get 
        \begin{align}
            \label{proof:prop:existence:2:09}
            \|v_m\|_{L^2(0,T; H^2(\Omega))} \leq C\left( \|v_0\|_{H_0^1(\Omega)}^2 + \|h\|_{L^2(0,T;L^2(\Omega))} \right).
        \end{align}
        \item \textit{Step 4: Last arrangements.} Now, we will analyze the pass to the limit on $m$. From \eqref{proof:prop:existence:2:02}, \eqref{proof:prop:existence:2:05} and \eqref{proof:prop:existence:2:09}, we know that 
        \begin{align*}
            (y_m)\text{ is bounded in }L^2(0,T;H^2(\Omega)\cap H_0^1(\Omega)) \cap H^1(0,T;L^2(\Omega)).
        \end{align*}

        By Aubin-Lions Lemma (extracting subsequence if necessary) we obtain a limit $v$ with the following properties:
        \begin{align*}
            v_m \rightharpoonup v \text{ in }L^2(0,T;H^2(\Omega)),\quad v_m \rightarrow v \text{ in }L^2(0,T;H_0^1(\Omega)),\text{ and }\pt v_m \rightharpoonup \pt v\text{ in }L^2(0,T;L^2(\Omega)).
        \end{align*}

        Passing to the limit in \eqref{proof:prop:existence:2:01} gives that $v$ is the weak solution and satisfies 
        \begin{align*}
            v\in L^2(0,T;H^2(\Omega) \cap H_0^1(\Omega)),\quad \pt v\in L^2(0,T;L^2(\Omega)).
        \end{align*}

        In fact, this proves that $v$ is a strong solution of \eqref{problem:existence:uniqueness}. Finally, thanks to the above estimates, we conclude that 
        \begin{align*}
            \|v\|_{L^2(0,T;H^2(\Omega) \cap H_0^1(\Omega))} + \|\pt v\|_{L^2(0,T;L^2(\Omega))} \leq C\left( \|v_0\|_{H_0^1(\Omega)} + \|h\|_{L^2(0,T;L^2(\Omega))} \right).
        \end{align*}

        This ends the proof of Proposition \ref{proposition:existence:strong:solutions}.
    \end{itemize}
\end{proof}



\end{document}